\newtheorem{theorem}{Theorem}[section]
\newtheorem*{theorem*}{Theorem}
\newtheorem*{lemma*}{Lemma}
\newtheorem{construction}[theorem]{Construction}
\newtheorem{conjecture}[theorem]{Conjecture}
\newtheorem*{conjecture*}{Conjecture}
\newtheorem{corollary}[theorem]{Corollary}
\newtheorem{definition}[theorem]{Definition}
\newtheorem{example}[theorem]{Example}
\newtheorem{lemma}[theorem]{Lemma}
\newtheorem{notation}[theorem]{Notation}
\newtheorem{proposition}[theorem]{Proposition}
\newtheorem{remark}[theorem]{Remark}
\newtheorem*{remark*}{Remark}
\DeclareMathOperator{\Cov}{Cov}
\begin{document}
\subjclass{}
\title{W-Operator and Differential Equation for 3-Hurwitz Number}

\author{Hao Sun}


\maketitle

\begin{abstract}
We consider a new type of Hurwitz number, the number of ordered transitive factorizations of an arbitrary permutation into $d$-cycles. In this paper, we focus on the special case $d=3$. The minimal number of transitive factorizations of any permutation into $3$-cycles has been worked out by David, Goulden and Jackson. Also, such factorizations for transpositions, the case $d=2$, have been considered by Crescimanno and Taylor. Goulden and Jackson have proved the differential equation for the generating series of simple Hurwitz numbers. Based on their results, we use $W$-operator to prove a differential equation for the generating function of the new type Hurwitz number.
\end{abstract}

\section{Introduction}

The Hurwitz enumeration problem \cite{Lando} aims at classifying all $n$-fold coverings of $S^2$ (or $\mathbb{C}P^1$) with $k$ branch points $\{z_1,...,z_k\}$. Given such a covering, each branch point $z_i$ corresponds to a unique permutation $\sigma_i$ of type $\lambda_i$ in $S_n$, where $\lambda_i$ is a partition of $n$. The number of such connected $n$-coverings is finite and denoted by $\Cov_n(\lambda_1,...,\lambda_k)$. Alternatively, $\Cov_n(\lambda_1,...,\lambda_k)$ is the number of $k$-tuples $(\sigma_1,...,\sigma_k) \in S^{k}_n$ satisfying the following conditions \cite{Carrel} \cite{Lando},

\begin{enumerate}
	\item[(1)]  $\sigma_i$ is of type $\lambda_i$,
	\item[(2)]  $\sigma_1...\sigma_k=1$,
	\item[(3)]  The group generated by $\sigma_1,...,\sigma_k$ is transitive on the set $\{1,...,n\}$.
\end{enumerate}

Now, we consider a special case of the Hurwitz enumeration problem. Given $\alpha$ a partition of $n$, we define the simple Hurwitz number as
\begin{align*}
	h_k(\alpha)=\Cov_n(1^{n-2}2,...,1^{n-2}2,\alpha).
\end{align*}
It is the number of $(k+1)$-tuples $(\tau_1,...,\tau_k,\sigma) \in S^{k+1}_n$ satisfying the following conditions
\begin{enumerate}
	\item[(1)]  $\tau_i$ are transpositions (or of type $1^{n-2}2$), where $1 \leq i \leq k$, and $\sigma$ is of type $\alpha$,
	\item[(2)]  $\tau_1...\tau_k \sigma=1$,
	\item[(3)]  The group generated by $\{\tau_1,...,\tau_k\}$ is transitive on the set $\{1,...,n\}$.
\end{enumerate}
I.P.Goulden and D.M.Jackson \cite{MR1396978} slightly change the problem by adding an extra condition
\begin{enumerate}
	\item[(4)]  Given any permutation $\sigma$ in $S_n$, the number of transpositions $k$ is minimal with respect to conditions $(1)$,$(2)$ and $(3)$.
\end{enumerate}
Under this condition, we write $h(\alpha)$ for $h_k(\alpha)$ and call it the minimal simple Hurwitz number.

Given any element $\sigma \in S_n$, denote by $\mu(\sigma)$ \cite{Cre} \cite{MR1396978} the minimal number $k$ of transpositions satisfying conditions $(1)$,$(2)$ and $(3)$ as in condition $(4)$. Before introducing the generating function, we first introduce some notations. We write $\alpha \vdash n$ if $\alpha$ is a partition of $n$, i.e. if $\alpha=(\alpha_1,...,\alpha_l)$, then $\alpha_1+...+\alpha_l=n$ and $\alpha_1 \geq ... \geq \alpha_l$. Let $p_\alpha=p_{\alpha_1}...p_{\alpha_l}$, where $p_i$ are variables. We will give another definition of $p_i$ in Section 3. If $\sigma$ and $\sigma'$ are of the same type, then $\mu(\sigma)=\mu(\sigma')$, which will be proved in Lemma 2.1. Hence, sometimes we use the notation $\mu(\alpha)$ in this paper for $\mu(\sigma)$, where $\alpha$ is the partition corresponding to $\sigma$. Now we come to the generating function of minimal simple Hurwitz numbers
\begin{align*}
F_2(z,p)=F_2(z,p_1,p_2,...)=\sum_{n \geq 1}\sum_{\alpha \vdash n}h(\alpha)\frac{z^n}{n!}\frac{1}{\mu(\alpha)!}p_\alpha.
\end{align*}
The cut-and-join operator $\Delta$ was introduced by Goulden \cite{MR1249468}. It is an infinite sum of differential operators in variables $p_1,p_2,... \text{ .}$ Goulden and Jackson \cite{MR1396978} use this operator to calculate the minimal simple Hurwitz number.

Mironov, Morosov and Natanzon construct $W$-operators $W([d])$ \cite{MR2864467}, where $d$ is a positive integer. They are differential operators acting on the space $\mathbb{C}[[X_{ij}]]_{i,j \geq 1}$ of formal series in variables $X_{ij}$ $(i,j \geq 0)$, where $X_{ij}$ are coordinate functions on the infinite matrix. A subring of $\mathbb{C}[[X_{ij}]]_{i,j \geq 1}$ is $\mathbb{C}[p_1,p_2,...]$, where $p_k=Tr(X^k)$ and $X=(X_{ij})_{i,j \geq 1}$. A direct calculation shows that $W([2])$ is the cut-and-join operator $\Delta$ on the ring $\mathbb{C}[p_1,p_2,...]$. Mironov et al. apply the $W$-operator to the Hurwitz enumeration problem \cite{MR2864467} and find a new equation about the generating function of some special Hurwitz numbers.

In Section 2, we review some results about the minimal simple Hurwitz number and the main steps to calculate it.

In Section 3, we review the definition of the $W$-operator and some properties.

In Section 4, we define a new type of Hurwitz number, the $d$-Hurwitz number,
\begin{align*}
h_k^{d}(\alpha)=\Cov_n(1^{n-d}d,...,1^{n-d}d,\alpha).
\end{align*}
It is the number of $(k+1)$-tuples $(\delta_1,...,\delta_k,\sigma) \in S^{k}_n$ satisfying the following conditions
\begin{enumerate}
	\item[(1)]  $\delta_i$ are $d$-cycles (or of type $1^{n-d}d$), where $1 \leq i \leq k$, and $\sigma$ is of type $\alpha$,
	\item[(2)]  $\delta_1...\delta_k \sigma=1$,
	\item[(3)]  The group generated by $\{\delta_1,...,\delta_k\}$ is transitive on the set $\{1,...,n\}$.
\end{enumerate}
Similarly, we define the minimal $d$-Hurwitz number $h^{d}(\alpha)$ by adding another condition
\begin{enumerate}
\item[(4)] $k$ is minimal with respect to the conditions $(1)$,$(2)$ and $(3)$.
\end{enumerate}
Denote by $\mu^d(\sigma)$ the minimal number. Compared to the simple Hurwitz number, we replace every transposition by a $d$-cycle. The simple Hurwitz number is therefore the $2$-Hurwitz number $h_k(\alpha)=h^2_k (\alpha)$ and the minimal simple Hurwitz number is the minimal $2$-simple Hurwitz number $h(\alpha)=h^2(\alpha)$.

Given any permutation $\sigma$, we review the calculation of the number $\mu^d(\sigma)$ when $d=3$ \cite{MR1797682}.
\begin{lemma}[\textbf{\ref{408}}]
Let $n$ be a positive integer and let $\sigma$ be a permutation in the alternating group $A_n$, i.e. $\sigma$ is the product of $3$-cycles. We decompose $\sigma=\rho_1...\rho_l$ in disjoint cycles $\rho_i$, then we have
\begin{enumerate}
	\item[(1)]  if $n$ is odd, $\mu^3(\sigma)=\frac{n-1}{2}+\frac{l-1}{2}$,
	\item[(2)]  if $n$ is even, $\mu^3(\sigma)=\frac{n}{2}+\frac{l-2}{2}$.
\end{enumerate}
In conclusion, we have $\mu^3(\sigma)=\frac{n+l-2}{2}$.
\end{lemma}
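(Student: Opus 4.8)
The plan is to prove the two inequalities $\mu^3(\sigma)\ge \frac{n+l-2}{2}$ and $\mu^3(\sigma)\le \frac{n+l-2}{2}$ separately; together they give the stated value, and the split into the cases $n$ odd and $n$ even is then automatic. Indeed, a permutation with $l$ cycles on $n$ letters (counting fixed points as $1$-cycles) has sign $(-1)^{n-l}$, so $\sigma\in A_n$ forces $n\equiv l \pmod 2$; this makes $\frac{n+l-2}{2}$ an integer and gives $\frac{n+l-2}{2}=\frac{n-1}{2}+\frac{l-1}{2}$ when $n$ is odd and $=\frac{n}{2}+\frac{l-2}{2}$ when $n$ is even. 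Throughout I factor $\sigma^{-1}=\delta_1\cdots\delta_k$ (equivalently $\sigma$, up to reversing the order and inverting each factor, which changes neither $k$ nor transitivity), and I write $c(\pi)$ for the number of cycles of $\pi$, so $c(\sigma^{-1})=l$.

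For the lower bound I would introduce a running genus monovariant. Set $\pi_j=\delta_1\cdots\delta_j$, let $c_j=c(\pi_j)$, and let $t_j$ be the number of orbits of $G_j=\langle\delta_1,\dots,\delta_j\rangle$ on $\{1,\dots,n\}$, so $c_0=t_0=n$ and, at the end, $c_k=l$ and $t_k=1$ by transitivity. Define $\gamma_j=j-\frac{1}{2}(n+c_j-2t_j)$, so $\gamma_0=0$. The key point is that $\gamma_j-\gamma_{j-1}\ge 0$ for every $j$: writing $\Delta c=c_j-c_{j-1}$ and $\Delta t=t_j-t_{j-1}$, this is the inequality $\Delta c-2\Delta t\le 2$, which I would verify by case analysis on how the support of $\delta_j$ meets the current orbit and cycle structure. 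Since every cycle of $\pi_{j-1}$ lies inside a single orbit of $G_{j-1}$, if the three moved points lie in three different orbits then $\Delta t=-2$ and the three corresponding cycles merge, forcing $\Delta c=-2$; if they meet exactly two orbits then $\Delta t=-1$ and no cycle can be split, so $\Delta c\le 0$; and if they lie in one orbit then $\Delta t=0$ while always $\Delta c\le 2$. In all cases $\Delta c-2\Delta t\le 2$, so $\gamma_k\ge\gamma_0=0$, i.e. $k\ge\frac{n+l-2}{2}$. This $\gamma$ is exactly the total genus of the associated branched cover, and the argument is the combinatorial form of Riemann--Hurwitz together with $g\ge 0$.

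For the upper bound I would exhibit a transitive factorization meeting the bound, built from two gadgets and one connecting move. An odd cycle is an overlapping chain, $(x_1\,x_2\,\cdots\,x_{2m+1})=(x_1\,x_2\,x_3)(x_3\,x_4\,x_5)\cdots(x_{2m-1}\,x_{2m}\,x_{2m+1})$, using $\frac{a-1}{2}$ factors and transitive on its support; two even cycles are handled together by peeling a transposition off each, $(x_1\cdots x_a)=(x_1\,x_2)(x_2\,x_3\cdots x_a)$ with $(x_2\cdots x_a)$ an odd chain, and then gluing the two leftover transpositions by $(x_1\,y_1\,x_2)(x_1\,y_1\,y_2)=(x_1\,x_2)(y_1\,y_2)$, which is transitive on the four points, so a pair of even cycles of lengths $a,b$ costs $\frac{a+b}{2}$ factors. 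The connecting move rests on the identity $(w\,b_1\,b_2)(w\,b_2\,b_3)=(b_1\,b_2\,b_3)$, valid for any $w$ distinct from $b_1,b_2,b_3$: given a transitive factorization already built on a set $A$ and a block built on a disjoint set $B$ containing at least one factor $(b_1\,b_2\,b_3)$, replacing that factor by $(w\,b_1\,b_2)(w\,b_2\,b_3)$ for a chosen $w\in A$ leaves the product unchanged, adds exactly one $3$-cycle, and makes the group transitive on $A\cup B$. Processing the cycles of $\sigma^{-1}$ one block at a time (odd cycles singly, even cycles in pairs, and fixed points as empty blocks attached by the same move with the roles of $A,B$ reversed so the side carrying a factor is the one modified), the first block is free and every later block costs one connector, so the totals telescope to $\frac{n+l-2}{2}$ (concretely $\frac{n-p}{2}$ internal factors, with $p$ the number of odd cycles including fixed points, plus one connector per block after the first). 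The case $\sigma=1$ is a separate base case, e.g. $(1\,2\,3)(1\,3\,2)=1$ extended by the same connector.

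The main obstacle is the upper-bound construction, and specifically the connecting move: one must enlarge transitivity while keeping the product exactly $\sigma^{-1}$ and spending only one extra $3$-cycle, since genus $0$ leaves no slack at all. The identity $(w\,b_1\,b_2)(w\,b_2\,b_3)=(b_1\,b_2\,b_3)$ is what makes this possible, and the remaining effort is the parity bookkeeping, namely forcing even cycles to be paired so that each increment of the count stays integral, correctly attaching fixed points, and ordering the factors in the even-pair gadget so the product is the intended one. On the lower-bound side, the one delicate point is verifying, through the case analysis above, that the monovariant $\gamma_j$ never decreases.
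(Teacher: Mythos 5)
Your proposal is correct, but it proves both halves of the equality by routes genuinely different from the paper's. For the lower bound the paper simply splits each $3$-cycle into two transpositions: a transitive factorization of $\sigma$ into $k$ $3$-cycles yields one into $2k$ transpositions, so $2k\ge \mu^2(\sigma)=n+l-2$ by Lemma \ref{201}; your non-decreasing monovariant $\gamma_j=j-\tfrac12(n+c_j-2t_j)$ is the combinatorial Riemann--Hurwitz argument (the paper only gestures at this in Remark \ref{409}), and it costs you a three-case analysis that the paper avoids, but it is self-contained given the cycle-count rules of Corollary \ref{405} and it is the version that generalizes cleanly to $d$-cycles (Lemma \ref{604}). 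For the upper bound the paper inducts on $l$: the base cases $l=1,2$ are the explicit chains of Lemma \ref{407} (with the $l=2$, two-arbitrary-cycles case deferred to Remark \ref{4008} as an exercise), and the induction step merges three disjoint cycles of $\sigma$ into one cycle of $\sigma'$ and recovers $\sigma=\omega\sigma'$ at the cost of a single extra $3$-cycle via Construction \ref{403}; you instead build the factorization bottom-up from explicit gadgets (overlapping chains for odd cycles, paired even cycles glued through $(x_1\,y_1\,x_2)(x_1\,y_1\,y_2)=(x_1\,x_2)(y_1\,y_2)$) and join blocks with the connector identity $(w\,b_1\,b_2)(w\,b_2\,b_3)=(b_1\,b_2\,b_3)$. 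The two constructions reach the same count $\tfrac{n-p}{2}+(\text{blocks}-1)+\tfrac{q}{2}=\tfrac{n+l-2}{2}$; yours is fully explicit and does not lean on the unproved Remark \ref{4008}, while the paper's induction is shorter to state and dovetails with the type-$(1)$/type-$(3)$ case analysis it needs later in Lemma \ref{502}. One small caveat common to both arguments: for $n=2$ the formula outputs $1$ although $S_2$ contains no $3$-cycles, so the identity of $A_2$ must be excluded (or $\mu^3$ read as undefined there), as the paper implicitly does via Remark \ref{4002}.
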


In Section 5, we construct the generating function for minimal $d$-Hurwitz numbers,
\begin{align*}
F_d(z,p_1,p_2,...)=\sum_{n \geq 1}\sum_{\alpha \vdash n}h^{d}(\alpha)\frac{z^n}{n!}\frac{1}{\mu^d(\alpha)!}p_\alpha.
\end{align*}
$\widetilde{W}([3])$ is a "differential operator" in variables $p_i$, $i \geq 1$, defined in Definition \ref{504}. We prove the following theorem.
\begin{theorem}[\textbf{\ref{505}}]
The generating function $F_3(z,p)$ satisfies the following relation
\begin{align*}
& \widetilde{W}([3])F_3-\sum_{i,j,k \geq 1}(i+j+k)p_{i+j+k} \frac{\partial F_3}{\partial p_{i+j+k}}\\
=& \frac{1}{2}(z \frac{\partial F_3}{\partial z}+\sum_{i \geq 1} p_i \frac{\partial F_3}{\partial p_i}-2F_3).
\end{align*}
\end{theorem}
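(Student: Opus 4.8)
The plan is to verify the identity coefficient by coefficient in the monomials $\frac{z^n}{n!}p_\alpha$, reading each operator through its action on such a monomial. I would first dispose of the right-hand side, which is an eigenvalue operator in disguise. The operator $z\frac{\partial}{\partial z}$ multiplies $\frac{z^n}{n!}p_\alpha$ by $n=|\alpha|$, while $\sum_{i\ge 1}p_i\frac{\partial}{\partial p_i}$ is the Euler operator, multiplying the monomial $p_\alpha=p_{\alpha_1}\cdots p_{\alpha_l}$ by its degree $l=\ell(\alpha)$, the number of parts of $\alpha$. Hence the right-hand side scales the term $h^3(\alpha)\frac{z^n}{n!}\frac{1}{\mu^3(\alpha)!}p_\alpha$ by $\frac12(n+l-2)$, which by Lemma \ref{408} is precisely $\mu^3(\alpha)$. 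Thus the right-hand side equals $\sum_{n\ge 1}\sum_{\alpha\vdash n}h^3(\alpha)\frac{z^n}{n!}\frac{1}{(\mu^3(\alpha)-1)!}p_\alpha$, and the theorem becomes the assertion that the left-hand side produces the same series.

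Next I would expand the left-hand side using the explicit form of $\widetilde W([3])$ from Definition \ref{504}, organizing its monomial terms according to how the three moved points of an inserted $3$-cycle meet the disjoint cycles of a permutation of type $\alpha$. The essential terms are the splitting terms $p_ip_jp_k\frac{\partial}{\partial p_{i+j+k}}$: applied to $F_3$ they pick out a part $m=i+j+k$ of some $\alpha$ and break it into parts $i,j,k$, producing a type $\beta$ with two more parts and with $\mu^3(\beta)=\mu^3(\alpha)+1$. Combinatorially this is the operation $\sigma_\beta=\sigma_\alpha\tau$, where $\tau$ is a $3$-cycle lying inside the $m$-cycle of $\sigma_\alpha$ and splitting it into the three cycles of $\sigma_\beta$; appending $\tau$ as a last factor sends a minimal factorization of $\sigma_\alpha$ of length $\mu^3(\alpha)$ to one of $\sigma_\beta$ of length $\mu^3(\beta)$. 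Reading off the coefficient of $\frac{z^n}{n!}p_\beta$, the factor $\frac{1}{\mu^3(\alpha)!}=\frac{1}{(\mu^3(\beta)-1)!}$ matches the denominator found on the right-hand side, so the computation reduces to the enumerative identity that the splitting insertions, summed over all ways of grouping three cycles of $\sigma_\beta$, recover every minimal factorization of $\sigma_\beta$ exactly once, i.e. reproduce $h^3(\beta)$.

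The remaining terms of $\widetilde W([3])$ are the cycle-preserving ones, in which the three points lie in a single $m$-cycle but are inserted with the orientation that returns the same $m$-cycle; these are diagonal and, after counting them against the $\binom{m-1}{2}$ compositions $m=i+j+k$ (weighted by $m$), assemble into exactly the subtracted operator $\sum_{i,j,k\ge 1}(i+j+k)p_{i+j+k}\frac{\partial}{\partial p_{i+j+k}}$. Subtracting this operator is therefore what isolates the genuinely splitting contribution and completes the identification with the right-hand side.

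The step I expect to be the main obstacle is the enumerative identity of the second paragraph, because the prefix obtained by deleting the last $3$-cycle need not be transitive: a minimal factorization of $\sigma_\beta$ may factor through a disconnected intermediate permutation, so a priori the connected numbers $h^3$ mix with disconnected ones and with the second-order (cycle-merging and redistributing) parts of the true operator $W([3])$. The crux is to show, using the genus-zero constraint of Lemma \ref{408}, that once the diagonal correction is removed the equation closes linearly on the connected generating series $F_3$ — equivalently, that every minimal factorization is counted once by the splitting insertions and that the non-splitting configurations contribute only through the diagonal term already subtracted. Verifying this cancellation, and pinning down the exact constants in the coefficient bookkeeping, is where the real work lies.
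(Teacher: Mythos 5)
Your treatment of the right-hand side is correct and matches the paper: both sides are compared against the monomial $\frac{z^n}{n!}p_\alpha$, the Euler-type operators produce the factor $\frac{1}{2}(n+l-2)=\mu^3(\alpha)$ via Lemma \ref{408}, and the factorial bookkeeping $\frac{1}{\mu^3(\alpha)!}\mapsto\frac{1}{(\mu^3(\alpha)-1)!}$ is exactly the paper's computation of $\frac{\partial\widetilde{F}_3}{\partial u}\big|_{u=1}$. The gap is in the left-hand side. You treat the linear splitting terms $p_ip_jp_k\frac{\partial}{\partial p_{i+j+k}}$ as "the essential terms" and reduce the theorem to the claim that splitting insertions alone recover every minimal factorization of $\sigma_\beta$ exactly once. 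That claim is false, and you essentially concede this in your final paragraph: deleting the distinguished $3$-cycle from a minimal transitive factorization need not leave a transitive tuple, so the splitting insertions only account for those factorizations whose distinguished factor cuts one cycle into three while preserving connectivity. The remaining factorizations are precisely what the \emph{nonlinear} terms of $\widetilde{W}([3])$ count --- $ijk\,p_{i+j+k}\frac{\partial F_3}{\partial p_i}\frac{\partial F_3}{\partial p_j}\frac{\partial F_3}{\partial p_k}$ when removal of the factor disconnects $\{1,\dots,n\}$ into three components, and $(i+j)k\,p_ip_{j+k}\frac{\partial F_3}{\partial p_{i+j}}\frac{\partial F_3}{\partial p_k}$ when it disconnects into two --- and your proposal never engages with them except to name them as "the main obstacle" and "where the real work lies." Since resolving exactly this is the content of the theorem, the proof is not complete.

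The paper closes this gap by a trichotomy (Lemma \ref{502}): for a minimal factorization $\sigma=\delta_1\cdots\delta_k$, the pair $(\sigma,\delta_1)$ falls into one of three types according to whether removing $\delta_1$ leaves three, two, or one connected component (the cycle-type-preserving fourth case is excluded by minimality, which is why the corresponding summation of $\widetilde{W}([3])$ must be subtracted --- it contributes nothing combinatorially, rather than "assembling into" the diagonal operator as you suggest). Each type is then matched to one family of terms via product formulas of exponential type, $(\bar{F}_3)_1=\frac{1}{3!}(\widetilde{F}_3)^3$ and $(\bar{F}_3)_2=\frac{1}{2!}(\widetilde{F}_3)^2$ (Lemma \ref{5007}), together with a careful count of orderings of components and of the $i_1i_2i_3$ choices of joining points (Lemmas \ref{5000012}, \ref{50010}, \ref{5010}); this is where the nonlinear terms and their coefficients come from. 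To repair your argument you would need to supply this decomposition and the accompanying multinomial bookkeeping; the linear splitting analysis you give covers only one of the three cases.
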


In Section 6, we have a conjecture about the differential equations satisfied by $F_d$ and $\widetilde{F}_d$, where
\begin{align*}
\widetilde{F}_d=\sum_{n \geq 1}\sum_{\alpha\vdash n}h^d(\alpha)\frac{z^n}{n!}\frac{u^{\mu^d(\alpha)}}{\mu^d(\alpha)!}p_\alpha.
\end{align*}
Before we state the conjectures, we review some properties of $W([d])$ and define a new operator $\widetilde{HW}([d])$. $W([d])$ can be written as the sum of $d!$ summations, each of which corresponds uniquely to a permutation in $S_d$ \cite{Sun1}, i.e.
\begin{align*}
W([d])=\sum_{\beta \in S_d} FS_\beta,
\end{align*}
where $FS_\beta$ is the summation corresponding to $\beta \in S_d$. We define the degree for the summation $FS_\beta$ (see Definition \ref{601}). The degree of the summation $FS_\beta$, $\beta \in S_n$ is at most $d+1$ (see Section 6 and \cite{Sun4}). Finally, we define another operator $\widetilde{HW}([d])$, which is sum of all summations with degree $d+1$ (see Definition \ref{602}). The conjecture is as following.
\begin{conjecture}[\textbf{\ref{603}}]
Given any positive integer $d$, we have
\begin{align*}
\frac{\partial \widetilde{F}_d}{\partial u}=\widetilde{HW}([d])\widetilde{F}_d.
\end{align*}
\end{conjecture}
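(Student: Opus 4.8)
The plan is to realize $\widetilde F_d$ as the genus-zero (minimal) part of an all-genera generating function and to read off the conjecture as the leading term of the standard cut-and-join evolution for the latter. Concretely, I would introduce $\mathbf F_d=\sum_{n\ge1}\sum_{\alpha\vdash n}\sum_{k\ge\mu^d(\alpha)}h^d_k(\alpha)\frac{z^n}{n!}\frac{u^{k}}{k!}p_\alpha$, the series in which $u$ records the actual number of $d$-cycles rather than only the minimal one. Because $W([d])$ implements ``append one $d$-cycle'' on $\mathbb C[p_1,p_2,\dots]$ (the content of the Mironov--Morozov--Natanzon construction \cite{MR2864467}, of which $W([2])=\Delta$ \cite{MR1249468} is the prototype), its not-necessarily-transitive version obeys the evolution $\partial_u\mathbf F_d=W([d])\mathbf F_d$. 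The function $\widetilde F_d$ is obtained from $\mathbf F_d$ by keeping, for each type $\alpha$, only the term $k=\mu^d(\alpha)$; I would call this the leading part and aim to show that applying the leading-part projection to both sides of the evolution equation yields exactly $\partial_u\widetilde F_d=\widetilde{HW}([d])\widetilde F_d$.

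The key to the collapse $W([d])\rightsquigarrow\widetilde{HW}([d])$ is the genus interpretation of the degree. Generalizing Lemma \ref{408} via David--Goulden--Jackson \cite{MR1797682} to $\mu^d(\alpha)=\frac{n+l-2}{d-1}$, where $l$ is the number of parts, one sees that a single $d$-cycle multiplication changes $\mu^d$ by $1$ precisely when it changes the number of parts by $d-1$, i.e. when it cuts one cycle into a maximal family of $d$ cycles (or, read backwards, joins $d$ cycles). Since the decomposition $W([d])=\sum_{\beta\in S_d}FS_\beta$ sorts these multiplications by how the $d$-cycle meets the cycles of the permutation, and the degree of $FS_\beta$ (Definition \ref{601}) measures the resulting change in the number of parts, the maximal degree $d+1$ singles out the genus-preserving step. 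Every summand of degree $<d+1$ raises the genus and therefore contributes only to $u$-powers strictly above $\mu^d(\alpha')$; such contributions vanish under the leading-part projection, leaving only $\widetilde{HW}([d])=\sum_{\deg FS_\beta=d+1}FS_\beta$ (Definition \ref{602}). Matching normalizations is then a bookkeeping check: the factor $\mu^d(\alpha')$ produced by $\partial_u$ cancels one factorial and reproduces the weight $\frac{1}{\mu^d(\alpha)!}$ attached to the type $\alpha$ one step lower.

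I would anchor the general argument at $d=3$, where Theorem \ref{505} already packages the required instance of the collapse once $\widetilde W([3])$ and its subtraction terms are matched against the leading part of $W([3])$, and then either induct on $d$ or run the projection uniformly after establishing the degree--genus dictionary for all $d$ from the structural analysis of the $FS_\beta$ in \cite{Sun1} and \cite{Sun4}. The hard part, and the reason the statement is only conjectural, is the interplay between the leading-part projection and transitivity: removing the last $d$-cycle from a minimal transitive factorization may disconnect the configuration (already for $d=2$, stripping a transposition from a minimal factorization of an $n$-cycle destroys transitivity), so the naive recursion for the connected numbers $h^d$ is not literally the cut recursion. The linearity of the conjectured equation asserts that, at genus zero, these disconnected boundary contributions are exactly absorbed by the definition of $\widetilde{HW}([d])$ and by the correction terms visible in Theorem \ref{505}; establishing this cancellation---equivalently, proving that the genus-zero step acts linearly on the connected series for every $d$---is where I expect the main difficulty to lie, and I would attack it by comparing the leading coefficients of $W([d])$ in the Schur/content basis, where $W([d])$ is diagonal and the genus filtration becomes the filtration by the degree of the central character.
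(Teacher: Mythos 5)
First, a point of order: the paper does not prove this statement at all --- it is stated as Conjecture \ref{603} and explicitly left open, with only the case $d=3$ established (Theorem \ref{505}) by a direct combinatorial argument. So your proposal cannot be compared to a proof in the paper; it has to stand on its own, and as written it does not. Its central step is incorrect as stated: you define $\mathbf F_d$ using the numbers $h^d_k(\alpha)$, which by Definition \ref{401} count \emph{transitive} factorizations, and you assert the linear evolution $\partial_u\mathbf F_d=W([d])\mathbf F_d$. That equation is the Mironov--Morozov--Natanzon evolution \cite{MR2864467} for the series of \emph{non-transitive} (disconnected) counts; it fails for the transitive series, precisely because multiplication by $K_{1^{n-d}d}$ (Theorem \ref{310}) does not preserve transitivity. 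If you repair this by letting $\mathbf F_d$ be the disconnected series, then $\widetilde F_d$ is not its ``leading part'': one must first pass from disconnected to connected (a logarithm) and only then extract the minimal terms, and the interaction of these two operations is the entire content of the conjecture. In particular, a linear projection applied to a linear equation can never by itself produce the nonlinear terms such as $p_{i+j+k}\frac{\partial\widetilde F_d}{\partial p_{i}}\frac{\partial\widetilde F_d}{\partial p_{j}}\frac{\partial\widetilde F_d}{\partial p_{k}}$ that constitute $\widetilde{HW}([d])\widetilde F_d$. In the paper's $d=3$ proof these products arise combinatorially: deleting $\delta_1$ from a minimal transitive factorization splits $\{1,\dots,n\}$ into up to three components, each carrying its own minimal transitive factorization (Lemma \ref{502}), and the products of first derivatives with their constants come from the careful ordered-component bookkeeping of Lemmas \ref{5007}, \ref{5010}, \ref{50010} and \ref{5009}. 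You yourself flag this ``interplay between the leading-part projection and transitivity'' as the hard part --- a correct diagnosis, but it means the proposal stops exactly where a proof would have to begin.

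Two further inaccuracies are worth naming. The degree--genus dictionary is misstated: the degree $d(FS_\beta)$ of Definition \ref{601} does not measure the change in the number of parts --- for $d=3$ the mixed degree-$4$ summands and the degree-$2$ summand $FS_{(123)}$ both preserve the number of parts; what excludes $FS_{(123)}$ is that it preserves the entire cycle type, which contradicts minimality (Case (4) of Lemma \ref{502}, via Construction \ref{403}). Moreover, the claim that every summand of degree less than $d+1$ is incompatible with minimality for all $d$ is itself an unproved structural assertion about $W([d])$ (the paper defers the degree analysis to \cite{Sun4}), so it cannot be cited as known. That said, your underlying strategy --- realize Conjecture \ref{603} as the genus-zero/dispersionless limit of the linear evolution for the disconnected series, possibly diagonalizing $W([d])$ in the Schur basis --- is a genuinely different line of attack from the paper's tuple-by-tuple combinatorics, and if the degree dictionary and the exchange of projection with the logarithm were actually established, it would plausibly yield the general case more uniformly than extending Construction \ref{403} to arbitrary $d$. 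As it stands, however, it is a research program with an acknowledged gap, not a proof.
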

If the above conjecture is true, we will get the following corollary by taking $u=1$.
\begin{corollary}[\textbf{\ref{605}}]
\begin{align*}
\widetilde{HW}([d])F_d=\frac{1}{d-1}\left( z\frac{\partial F_d}{\partial z}+ \sum_{i \geq 1}p_i \frac{\partial F_d}{\partial p_i} -2F_d \right).
\end{align*}
\end{corollary}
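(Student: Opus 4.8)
The plan is to derive the corollary from Conjecture \ref{603} purely formally, by specializing $u=1$ and translating the resulting $u$-derivative into the operators $z\partial_z$ and $\sum_i p_i\partial_{p_i}$. Since $\widetilde{F}_d|_{u=1}=F_d$, setting $u=1$ in the conjecture turns its right-hand side into $\widetilde{HW}([d])F_d$. It therefore suffices to show that $\left.\partial_u\widetilde{F}_d\right|_{u=1}$ equals the right-hand side of the corollary. Differentiating the defining series of $\widetilde{F}_d$ termwise and setting $u=1$ gives
\begin{align*}
\left.\frac{\partial \widetilde{F}_d}{\partial u}\right|_{u=1}
=\sum_{n\geq 1}\sum_{\alpha\vdash n}h^d(\alpha)\frac{z^n}{n!}\frac{\mu^d(\alpha)}{\mu^d(\alpha)!}p_\alpha ,
\end{align*}
so each monomial of $F_d$ is now weighted by an extra factor of $\mu^d(\alpha)$.

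Next I would compute the action of the three operators on $F_d$ separately. The operator $z\partial_z$ extracts the factor $n=|\alpha|$ from the power of $z$; the operator $\sum_i p_i\partial_{p_i}$ extracts the number of parts $l(\alpha)$, because $p_i\partial_{p_i}$ records the multiplicity of $p_i$ in $p_\alpha$ and summing over $i$ yields the total number of parts; and $-2F_d$ contributes the constant $-2$. Adding these three contributions, the bracket on the right-hand side of the corollary weights each monomial by $n+l(\alpha)-2$, so the full right-hand side weights it by $\frac{n+l(\alpha)-2}{d-1}$.

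The two expressions then agree term by term precisely when $\mu^d(\alpha)=\frac{n+l(\alpha)-2}{d-1}$. For $d=3$ this is exactly the closed form in Lemma \ref{408}. For general $d$ it follows from the Riemann--Hurwitz formula applied to the minimal (genus-zero) covering: each $d$-cycle has defect $d-1$ and $\sigma$ has defect $n-l(\alpha)$, so $2-2g=n+l(\alpha)-k(d-1)$, and minimality of $k$ forces $g=0$, hence $k(d-1)=n+l(\alpha)-2$.

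The main point requiring care is precisely this general $\mu^d$ formula --- equivalently, the statement that the minimal transitive factorization into $d$-cycles is always realized in genus zero, which is what pins down the constant $\frac{1}{d-1}$ appearing in the corollary. Everything else is a termwise comparison of two formal power series in $z$ and the $p_i$. Granting the genus-zero $\mu^d$ formula, the corollary is an immediate formal consequence of Conjecture \ref{603}, obtained by matching coefficients of $\frac{z^n}{n!}\frac{1}{\mu^d(\alpha)!}p_\alpha$ on both sides.
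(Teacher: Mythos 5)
Your proposal is correct and follows essentially the same route as the paper: specialize Conjecture \ref{603} at $u=1$, compare the series termwise, and use the identity $\mu^d(\alpha)=\frac{n+l(\alpha)-2}{d-1}$ to convert $\left.\partial_u\widetilde{F}_d\right|_{u=1}$ into $\frac{1}{d-1}\bigl(z\partial_z F_d+\sum_i p_i\partial_{p_i}F_d-2F_d\bigr)$. The only difference is that the paper simply cites this $\mu^d$ formula as Lemma \ref{604} from Goulden--Jackson, whereas you sketch a Riemann--Hurwitz justification (the paper does the analogous sketch only for $d=3$ in Remark \ref{409}); your observation that this formula is the one nontrivial input is accurate.
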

In this paper, we prove the special case $d=3$ (see Theorem \ref{505}). For the general case, we believe similar to the case $d=3$ can be given, but will be very complicated to write down.
\section{Simple Hurwitz Number}

In this section, we review some results about the minimal simple Hurwitz number.

First, we want to make a short remark about the disjoint cycles in a permutation $\sigma$. Every permutation $\sigma$ in $S_n$ can be written uniquely as a product of disjoint cycles, where the cycles are $d$-cycles, $d \geq 2$. But, in this paper, the product of disjoint cycles also includes the fixed points or "1"-cycles. For example, let's consider $\sigma=(1 2 3) \in S_4$. In this paper, if we write $\sigma$ as the product of disjoint cycles, it is $(1 2 3)(4)$.

Now, we come to the minimal simple Hurwitz number.

Recall for given any permutation $\sigma$, $\mu(\sigma)$ is the minimal number of transpositions satisfying condition (1),(2),(3),(4) in the beginning of the introduction. If two permutations $\sigma$ and $\sigma'$ are of the same type, then $\mu(\sigma)=\mu(\sigma')$. This statement is a direct consequence of the following lemma.

\begin{lemma}\label{201}
Given any element $\sigma \in S_n$, we write $\sigma=\rho_1...\rho_{l(\sigma)}$ as the product of disjoint cycles, where $l(\sigma)$ is the number of disjoint cycles of $\sigma$, then we have
\begin{align*}
\mu(\sigma)=n+l(\sigma)-2.
\end{align*}
\end{lemma}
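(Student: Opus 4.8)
The plan is to prove the two inequalities $\mu(\sigma) \le n + l(\sigma) - 2$ and $\mu(\sigma) \ge n + l(\sigma) - 2$ separately. First I rewrite condition (2) as $\tau_1 \cdots \tau_k = \sigma^{-1}$. Since $\sigma^{-1}$ has the same cycle type as $\sigma$, in particular the same $n$ and the same number of disjoint cycles, I may work with the target permutation $\sigma^{-1}$ throughout; write $l = l(\sigma) = l(\sigma^{-1})$.

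For the upper bound I give an explicit construction. Each disjoint cycle $\rho_i$ of length $c_i$ factors into $c_i - 1$ transpositions supported on $\rho_i$ (a ``star'', e.g.\ $(a_1\,a_2)(a_1\,a_3)\cdots$), using $\sum_i (c_i - 1) = n - l$ transpositions in all; the associated graph, with one edge per transposition, then has exactly $l$ connected components, one per cycle. To meet the transitivity condition (3) I connect these components by inserting, for each of the $l - 1$ links needed, a trivial pair $(x\,y)(x\,y) = 1$ with $x$ and $y$ in two distinct components; this leaves the product unchanged but merges the two components. This adds $2(l-1)$ transpositions, producing a transitive factorization with $(n - l) + 2(l - 1) = n + l - 2$ transpositions, hence $\mu(\sigma) \le n + l - 2$.

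The lower bound is the crux. Build the product $\pi_i = \tau_1 \cdots \tau_i$ one transposition at a time, starting from $\pi_0 = 1$ (which has $n$ cycles, counting fixed points) and ending at $\pi_k = \sigma^{-1}$ (which has $l$ cycles). Multiplying by a transposition changes the number of cycles by $\pm 1$: a ``join'' ($-1$, endpoints in different cycles) or a ``cut'' ($+1$, endpoints in the same cycle). If $a$ and $b$ count the joins and cuts, then $a + b = k$ and $-a + b = l - n$, so $a = (k + n - l)/2$. Separately, view the transpositions as edges of a graph $G$ on $\{1, \dots, n\}$; condition (3) forces $G$ connected, so exactly $n - 1$ of the edges, in the order added, decrease the number of connected components (from $n$ down to $1$). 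The key point is that every such component-merging edge is a join: since $\pi_{i-1}$ lies in the group $H_{i-1} = \langle \tau_1, \dots, \tau_{i-1} \rangle$, each cycle of $\pi_{i-1}$ is contained in a single $H_{i-1}$-orbit, that is, in a single connected component of the partial graph; hence two vertices in distinct components lie in distinct cycles, and joining them is a cycle-merge. Therefore $a \ge n - 1$, which together with $a = (k + n - l)/2$ yields $k \ge n + l - 2$.

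Combining the two bounds gives $\mu(\sigma) = n + l - 2$. I expect the main obstacle to be the lower bound, and within it the step identifying component-merging graph edges with cycle-joins; the clean justification is the observation that the cycle partition of any $\pi \in H$ refines the orbit partition of $H$, which equals the connected-component partition of its graph. The parity requirement is automatically consistent, since $n + l - 2 \equiv n - l \pmod 2$ agrees with the parity of $\sigma$.
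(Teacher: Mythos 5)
Your argument is correct and complete. Note that the paper does not actually prove Lemma \ref{201}; it simply cites Proposition 2.1 of Goulden--Jackson, so you have supplied a self-contained proof where the paper delegates to the literature. Both halves of your argument are sound: the star-factorization plus trivial pairs $(x\,y)(x\,y)$ gives the upper bound $n+l-2$, and for the lower bound the key identification is exactly right --- the cycle partition of $\pi_{i-1}=\tau_1\cdots\tau_{i-1}$ refines the orbit partition of $\langle\tau_1,\dots,\tau_{i-1}\rangle$, so each of the $n-1$ component-merging edges of the (necessarily connected) transposition graph must be a join, forcing $a=(k+n-l)/2\ge n-1$ and hence $k\ge n+l-2$. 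This is essentially the standard cut-and-join counting argument that underlies the cited source, so no further comparison is needed.
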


\begin{proof}
\cite{MR1396978}, Proposition $2.1$.
\end{proof}
Since $\mu(\sigma)=\mu(\sigma')$, when $\sigma$ and $\sigma'$ are of the same type $\alpha$, sometimes we use the notation $\mu(\alpha)$ in this paper for $\mu(\sigma)$.
\begin{definition}
Assume $G$ is a group, $X$ is a set and there is a group homomorphism $G \rightarrow Aut(X)$. A subset $X'$ of $X$ is a connected component of $X$ (w.r.t $G$) if $G$ acts transitive on $X'$.
\end{definition}

Now consider a permutation $\sigma \in S_n$, we can write $\sigma$ into the product of disjoint cycles uniquely up to reordering,
\begin{align*}
\sigma=\rho_1...\rho_l.
\end{align*}
In his case, Lemma \ref{201} tells us $\mu(\sigma)=n+l-2$. Then, we can find $\mu(\sigma)$ transpositions $\tau_1,...,\tau_{\mu(\sigma)}$ such that
\begin{align*}
\sigma=\tau_1\tau_2...\tau_{\mu(\sigma)},
\end{align*}
and the group generated by $\{\tau_1,...,\tau_{\mu(\sigma)}\}$ acts transitively on $\{1,...,n\}$. We are interested in the product
\begin{align*}
\sigma'=\tau_1 \sigma=\tau_2...\tau_{\mu_\sigma}.
\end{align*}
If $\tau_1=(j_2 \text{ } j_1)$, $j_1 \neq j_2 \in \{1,...,n\}$, then there are two cases for $\sigma'=\tau_1 \sigma$.

\begin{lemma}\label{202}
\begin{enumerate}
    \item If $j_1,j_2$ occur in a single disjoint cycle (say $\rho_1$), then
    \begin{align*}
    \tau_1 \rho_1=(j_1...)(j_2...)
    \end{align*}
    is a product of two disjoint cycles. In this case, the set $\{1,...,n\}$ has two connected component under the action of the group generated by $\{\tau_i, 2 \leq i \leq \mu(\sigma)\}$. One contains $j_1$ and the other one contains $j_2$. In this case, $\tau_1$ is known as a cut-operator relative to $\sigma$.
	\item If $j_1,j_2$ occur in two different disjoint cycles (say $j_1 \in \rho_1$ and $j_2 \in \rho_2$), then
    \begin{align*}
    \tau_1 \rho_1 \rho_2=(j_1...j_2...)
    \end{align*}
    is the product of a single cycle and the action of the group generated by $\{\tau_i, 2 \leq i \leq \mu(\sigma)\}$ is transitive on the set $\{1,...,n\}$. In this case, $\tau_1$ is known as a join-operator relative to $\sigma$.
\end{enumerate}
\end{lemma}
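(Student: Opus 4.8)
The plan is to treat the two assertions in each case separately: first the purely algebraic statement about how $\tau_1$ acts on the cycle type of $\sigma$, and then the statement about the connected components of the group $G'=\langle \tau_2,\dots,\tau_{\mu(\sigma)}\rangle$. For the cycle-type statement I would carry out the standard computation for the product of a transposition with a permutation: writing $\tau_1=(j_2\ j_1)$ and expanding $\tau_1\rho_1$ (respectively $\tau_1\rho_1\rho_2$) directly shows that when $j_1,j_2$ lie in one cycle that cycle splits into the two disjoint cycles $(j_1\dots)(j_2\dots)$, while when they lie in two cycles these merge into the single cycle $(j_1\dots j_2\dots)$. Consequently, writing $\sigma'=\tau_1\sigma$, we have $l(\sigma')=l+1$ in Case 1 and $l(\sigma')=l-1$ in Case 2.

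For the component statement I would pass to the standard graph model: identify the transpositions $\tau_2,\dots,\tau_{\mu(\sigma)}$ with the edges of a graph $\Gamma'$ on the vertex set $\{1,\dots,n\}$, so that the orbits of $G'$ are exactly the connected components of $\Gamma'$; likewise $\{\tau_1,\dots,\tau_{\mu(\sigma)}\}$ gives $\Gamma=\Gamma'+\tau_1$, which is connected by transitivity of the original factorization. Let $c$ be the number of components $O_1,\dots,O_c$ of $\Gamma'$. Since adding the single edge $\tau_1$ connects $\Gamma'$, we get $c\le 2$ immediately. Each $\tau_i$ with $i\ge 2$ has both endpoints in a single component, so $\sigma'$ preserves the partition $\{O_k\}$; using that transpositions supported on distinct components commute, the restriction $\sigma'|_{O_k}$ is a transitive factorization of $\sigma'$ on $O_k$ into $m_k$ transpositions. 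Applying Lemma \ref{201} on each $O_k$ gives $m_k\ge |O_k|+l_k-2$, and summing over $k$ yields $\mu(\sigma)-1\ge n+l(\sigma')-2c$.

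In Case 1, substituting $l(\sigma')=l+1$ and $\mu(\sigma)=n+l-2$ forces $c\ge 2$, hence $c=2$; since adding $\tau_1=(j_2\ j_1)$ merges the two components, $j_1$ and $j_2$ must lie in different components, which is exactly the claim. The main obstacle is Case 2, where the same counting only yields $c\le 2$ and does not by itself exclude $c=2$. Here I would argue by contradiction: if $c=2$ then, because $\tau_1$ connects the two components, $j_1$ and $j_2$ lie in different components $O_1$ and $O_2$; but in Case 2 the cycles of $\sigma$ through $j_1$ and $j_2$ merge into a single cycle of $\sigma'$ containing both $j_1$ and $j_2$, and since $\sigma'$ preserves each $O_k$, every cycle of $\sigma'$ is confined to one component, contradicting $j_1\in O_1$ and $j_2\in O_2$. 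Therefore $c=1$ and $G'$ acts transitively.

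I expect this last interplay, between the partition preserved by $\sigma'$ and the merged cycle spanning both $j_1$ and $j_2$, to be the crux of the argument. The counting estimate coming from Lemma \ref{201} disposes of Case 1 cleanly, but it is too lossy to settle Case 2 on its own, so the structural observation about $\sigma'$ respecting the components is what ultimately closes the gap.
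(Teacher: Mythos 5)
Your proof is correct, and for Case 1 it is essentially the paper's argument in slightly different clothing: the paper deduces non-transitivity of $G'=\langle\tau_2,\dots,\tau_{\mu(\sigma)}\rangle$ directly from Lemma \ref{201}, since $\sigma'$ would otherwise admit a transitive factorization into $\mu(\sigma)-1<\mu(\sigma')=\mu(\sigma)+1$ transpositions, and then combines this with the observation that adjoining the single transposition $\tau_1$ can merge at most two orbits. Your component-wise inequality $\mu(\sigma)-1\ge n+l(\sigma')-2c$ reaches the same conclusion $c=2$ by a mild refinement of that count. Where you genuinely add something is Case 2: the paper dismisses it with ``a similar argument,'' but as you correctly observe, the counting alone only gives $c\le 2$ there and cannot exclude $c=2$. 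Your structural step --- that each cycle of $\sigma'$ is confined to a single $G'$-orbit, while the merged cycle $(j_1\dots j_2\dots)$ contains both $j_1$ and $j_2$ --- is exactly the missing ingredient. (Equivalently, and even more directly: since $j_1$ and $j_2$ lie in a common cycle of $\sigma'\in G'$, they already lie in a common $G'$-orbit, so adjoining $\tau_1=(j_2\ j_1)$ does not enlarge any orbit, and transitivity of the full group forces transitivity of $G'$.) So your write-up is not only correct but fills in the one step the paper leaves implicit.
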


\begin{proof}
We consider the first case. The factorization of $\tau_1 \rho_1$ in two disjoint cycles is a simple calculation of disjoint cycles. Since $\sigma$ has $l$ disjoint cycles, so $\sigma'$ has $l+1$ disjoint cycles in this case. We know $\mu(\sigma')=\mu(\sigma)+1$ by Lemma \ref{201}. Now, $\sigma'$ is a product of $\mu(\sigma)-1$ transpositions $\tau_2,...,\tau_{\mu(\sigma)}$. Hence, the group generated by $\{\tau_2,...,\tau_{\mu(\sigma)}\}$ is not transitive on the set $\{1,...,n\}$. We know the group generated by $\{\tau_1,\tau_2,...,\tau_{\mu(\sigma)}\}$ is transitive on the set $\{1,...,n\}$. So, $\{1,...,n\}$ has two connected component with respect to the action of $\{\tau_2,...,\tau_{\mu(\sigma)}\}$ and one contains $j_1$, the other contains $j_2$.

The second statement is easy to verify by a similar argument.
\end{proof}

Goulden et al. use the cut-and-join operator to prove that the generating function of minimal simple Hurwitz numbers satisfies a special differential equation. Recall the generating function of minimal simple Hurwitz numbers is
\begin{align*}
F_2(z,p_1,p_2,...)=\sum_{n \geq 1}\sum_{\alpha \vdash n}h(\alpha)\frac{z^n}{n!}\frac{1}{\mu(\alpha)!}p_\alpha.
\end{align*}

\begin{lemma}\label{203}
The generating function $F_2(z,p_1,p_2,...)$ satisfies the following equation
\begin{align*}
\frac{1}{2}\sum_{i,j \geq 1}(p_{i+j}i \frac{\partial F_2}{\partial p_i} j \frac{\partial F_2}{\partial p_j}+(i+j)p_i p_j \frac{\partial F_2}{\partial p_{i+j}})-z \frac{\partial F_2}{\partial z} - \sum_{i \geq 1}p_i \frac{\partial F_2}{\partial p_i}+2F_2=0.
\end{align*}
\end{lemma}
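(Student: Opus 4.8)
The plan is to prove the identity combinatorially, by peeling off the first transposition of a minimal transitive factorization and using the cut/join dichotomy of Lemma \ref{202}. First I would record what the right-hand side means. Since $\mu(\alpha)=n+l(\alpha)-2$ by Lemma \ref{201}, the operator $z\partial_z+\sum_i p_i\partial_i-2$ multiplies the coefficient of $\frac{z^n}{n!}p_\alpha$ by $n+l(\alpha)-2=\mu(\alpha)$, so
\begin{align*}
z\frac{\partial F_2}{\partial z}+\sum_{i\geq 1}p_i\frac{\partial F_2}{\partial p_i}-2F_2=\sum_{n\geq 1}\sum_{\alpha\vdash n}h(\alpha)\frac{z^n}{n!}\frac{1}{(\mu(\alpha)-1)!}p_\alpha.
\end{align*}
This is exactly the generating series for minimal transitive factorizations with one distinguished transposition $\tau_1$ removed: each ordered factorization $\sigma=\tau_1\tau_2\cdots\tau_\mu$ is recorded by the pair consisting of $\tau_1$ and the length-$(\mu-1)$ word $\tau_2\cdots\tau_\mu$, the weight $\frac{1}{(\mu-1)!}$ being the exponential normalization for that shorter word. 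The goal is then to show that summing over the two possibilities for $\tau_1$ in Lemma \ref{202} reproduces the two terms on the left.

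For the join case ($j_1,j_2$ in different cycles of $\sigma$, say of sizes $i$ and $j$), the product $\sigma'=\tau_1\sigma$ merges these two cycles into a single cycle of size $i+j$ and, by Lemma \ref{202}(2), $\langle\tau_2,\ldots,\tau_\mu\rangle$ is still transitive. Hence $\sigma'$ has type $\alpha'=(\alpha\setminus\{i,j\})\cup\{i+j\}$ with $\mu(\alpha')=\mu(\alpha)-1$, and $\tau_2\cdots\tau_\mu$ is itself a minimal transitive factorization of $\sigma'$. Reconstructing $\sigma$ from $\sigma'$ amounts to choosing a transposition $\tau_1$ that cuts the $(i+j)$-cycle of $\sigma'$ into parts of sizes $i$ and $j$; a short count shows there are exactly $i+j$ such transpositions when $i\neq j$ and $(i+j)/2$ when $i=j$. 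These multiplicities are precisely encoded by $\frac12\sum_{i,j\geq 1}(i+j)p_ip_j\partial_{i+j}$ applied to $F_2$: the operator $p_ip_j\partial_{i+j}$ turns type $\alpha'$ into type $\alpha$, the factor $(i+j)$ supplies the transposition count, and the symmetric sum with $\frac12$ handles both the ordering of $(i,j)$ and the $i=j$ diagonal. This matches the linear term.

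For the cut case ($j_1,j_2$ in a single cycle of $\sigma$, of size $i+j$), the product $\sigma'$ splits that cycle into two cycles of sizes $i$ and $j$ lying in the two connected components $A\ni j_1$ and $B\ni j_2$ guaranteed by Lemma \ref{202}(1). Writing $a=|A|$, $b=|B|$ with $a+b=n$, and $l_A,l_B$ for the numbers of cycles of $\sigma'$ in each component (so $l_A+l_B=l(\sigma)+1$), Lemma \ref{201} gives $\mu(\sigma'|_A)+\mu(\sigma'|_B)=(a+l_A-2)+(b+l_B-2)=n+l(\sigma)-3=\mu(\sigma)-1$, so the word $\tau_2\cdots\tau_\mu$ of length $\mu-1$ is an arbitrary shuffle of a minimal transitive factorization of $\sigma'|_A$ and one of $\sigma'|_B$. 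The number of shuffles of words of lengths $\mu_A$ and $\mu_B$ is $\binom{\mu-1}{\mu_A}=\frac{(\mu-1)!}{\mu_A!\,\mu_B!}$, which is exactly the binomial coefficient produced when one multiplies two exponentially normalized series; thus the disconnection into $A$ and $B$ contributes a product $F_2\cdot F_2$ rather than a single $F_2$. Choosing $\tau_1=(j_1\,j_2)$ with $j_1$ in the size-$i$ cycle of $A$ and $j_2$ in the size-$j$ cycle of $B$ gives $i\cdot j$ possibilities, supplied by the operators $i\partial_i$ and $j\partial_j$; the recreated size-$(i+j)$ cycle of $\sigma$ is supplied by $p_{i+j}$, and $\frac12$ again accounts for the unordered pairing of the two components. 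This matches the nonlinear term $\frac12\sum_{i,j\geq 1}p_{i+j}\,i\partial_iF_2\,j\partial_jF_2$.

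Adding the two cases and equating with the right-hand side yields the claimed equation; the remaining step is the bookkeeping that converts per-permutation counts into the type-indexed, $\frac{z^n}{n!}$-weighted coefficients, where the automorphism factors $z_\alpha$ of each type must cancel correctly against the actions of $p_i$ and $\partial/\partial p_i$ on $p_\alpha$. I expect this normalization bookkeeping—simultaneously tracking the conjugacy-class sizes $n!/z_\alpha$, the combinatorial multiplicities $i+j$ and $ij$, and especially the exponential-shuffle identity $\frac{1}{\mu_A!\,\mu_B!}=\frac{1}{(\mu-1)!}\binom{\mu-1}{\mu_A}$ that forces the cut term to be a product—to be the main obstacle, since every constant must land in exactly the right place. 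The structural inputs are already in hand from Lemmas \ref{201} and \ref{202}, so the difficulty lies entirely in the careful translation to the $p$-variable generating function rather than in any new combinatorial idea.
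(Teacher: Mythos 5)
Your argument is correct and is essentially the same as the one the paper relies on: the paper gives no proof of its own here, merely citing Goulden--Jackson \cite{MR1396978}, Lemma 2.2, and that cited proof is precisely your peel-off-$\tau_1$ argument, using the cut/join dichotomy of Lemma \ref{202} together with $\mu(\alpha)=n+l(\alpha)-2$ from Lemma \ref{201}. The normalization bookkeeping you defer (conjugacy-class sizes, and the shuffle and ground-set binomials absorbed by the exponential weights $z^n/n!$ and $1/\mu(\alpha)!$) goes through exactly as you describe.
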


\begin{proof}
\cite{MR1396978}, Lemma $2.2$.
\end{proof}
Goulden et al. use this equation to calculate the minimal simple Hurwitz number.
\begin{theorem}\label{204}
Let $\alpha=(\alpha_1,...,\alpha_l)\vdash n$, for $n,l \geq 1$. Then
\begin{align*}
h(\alpha)=n^{l-3}(n+l-2)!\prod^{l}_{j=1} \frac{(\alpha_j)^{\alpha_j} }{(\alpha_j-1)!}.
\end{align*}
\end{theorem}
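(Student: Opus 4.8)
The plan is to extract from the cut-and-join equation of Lemma \ref{203} an explicit recursion for the numbers $h(\alpha)$, and then to verify that the claimed closed form satisfies it by induction. Writing $\mu = \mu(\alpha) = n + l - 2$ (Lemma \ref{201}) and comparing the coefficient of $z^n p_\alpha$ on both sides of the equation in Lemma \ref{203}, the three linear terms $-z\partial_z F_2 - \sum_i p_i \partial_{p_i} F_2 + 2F_2$ collapse, since $z^n p_\alpha$ carries $z$-degree $n$ and $p$-degree $l$, into the single contribution
\begin{align*}
(-n - l + 2)\frac{h(\alpha)}{n!\,\mu!} = -\frac{h(\alpha)}{n!\,(\mu-1)!}.
\end{align*}
Thus the equation rearranges into a formula expressing $h(\alpha)$ through the two remaining (join and cut) terms.

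Next I would read off those two terms combinatorially, exactly as in Lemma \ref{202}. The \emph{join} term $\tfrac12\sum_{i,j}(i+j)p_i p_j\,\partial_{p_{i+j}}F_2$ contributes, for each unordered pair of parts of $\alpha$ equal to $i$ and $j$, the Hurwitz number $h(\beta)$ where $\beta \vdash n$ is obtained from $\alpha$ by merging those two parts into a single part $i+j$; note $\beta$ has $l-1$ parts and $\mu(\beta) = \mu - 1$. The \emph{cut} term $\tfrac12\sum_{i,j}p_{i+j}\,(i\partial_{p_i}F_2)(j\partial_{p_j}F_2)$ is quadratic, and hence contributes, for each part $c = i+j$ of $\alpha$ and each splitting $c = i+j$, a product $h(\gamma)\,h(\delta)$ in which $\gamma \vdash n_1$ and $\delta \vdash n_2$ with $n_1 + n_2 = n$: the part $c$ of $\alpha$ is removed, a part $i$ is adjoined to $\gamma$, a part $j$ to $\delta$, and the remaining parts of $\alpha$ are distributed between $\gamma$ and $\delta$. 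Both $n_1, n_2 < n$. This is precisely the splitting into two connected components described in Lemma \ref{202}(1).

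With the recursion in hand I would induct on $n$, and for fixed $n$ on the number of parts $l$, so that the join term (same $n$, fewer parts) and the cut term (strictly smaller $n_1, n_2$) both refer to already-known values. The base cases are immediate: for $\alpha = (1)$ one checks $h((1)) = 1$, and for a single part $\alpha = (n)$ there is no join term, so $h((n)) = n^{n-2}$ must be produced from the cut term alone, matching Cayley's count of labelled trees. The substitution of the closed form into the join term is a short manipulation of the factorials $n^{l-3}(n+l-2)!$ and $\prod_j \alpha_j^{\alpha_j}/(\alpha_j-1)!$, which I expect to go through routinely.

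I expect the cut term to be the main obstacle. After inserting the inductive formula for $h(\gamma)$ and $h(\delta)$, verifying the identity requires summing over all splittings $c = i+j$, over all ways of distributing the remaining parts of $\alpha$ into the two blocks, and over the compositions $n = n_1 + n_2$; the factors $n_1^{\,l(\gamma)-3}$ and $n_2^{\,l(\delta)-3}$ must recombine into $n^{l-3}$. This is an Abel/Cayley-type summation identity, and carrying it out cleanly — keeping track of the multiplicities of repeated parts and of the symmetry factor $\tfrac12$ — is where essentially all the work lies. A convenient device is to pass to the exponential generating function in $z$ and use the tree function $T(z) = \sum_{n\geq1} \frac{n^{n-1}}{n!}z^n$, which satisfies $T = z e^{T}$ and linearizes exactly the factors $n^n/n!$ appearing in the formula, reducing the Abel-type sum to an application of Lagrange inversion.
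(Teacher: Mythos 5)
The paper offers no proof of Theorem \ref{204} --- it is quoted from \cite{MR1396978} --- so the only meaningful comparison is with the proof in that reference, and your plan is in fact the route Goulden and Jackson take: derive the cut-and-join equation (Lemma \ref{203}), read it as a recursion for $h(\alpha)$ by extracting the coefficient of $z^n p_\alpha$, and verify the closed form against that recursion using the tree function $T=ze^{T}$ and Lagrange inversion. The parts of the argument you do carry out are correct: $z^n p_\alpha$ is an eigenvector of $z\partial_z$ and of $\sum_i p_i\partial_{p_i}$ with eigenvalues $n$ and $l$, so the linear terms collapse to $-(n+l-2)h(\alpha)/(n!\,\mu!)=-h(\alpha)/(n!\,(\mu-1)!)$; the quadratic term is the two-component (cut) case of Lemma \ref{202} and the other term is the join case with $\mu(\beta)=\mu-1$; the double induction (join keeps $n$ and lowers $l$, cut lowers $n$) is well-founded; and the base cases check out, with $h((n))=n^{n-2}$ consistent with Cayley's formula.

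The gap is the one you name yourself and then do not close: the verification that $n^{l-3}(n+l-2)!\prod_{j}\alpha_j^{\alpha_j}/(\alpha_j-1)!$ actually satisfies the quadratic (cut) part of the recursion. That identity --- summing $h(\gamma)h(\delta)$ over splittings $i+j$ of a part, over distributions of the remaining parts and of the ground set (the factors ${n \choose n_1}$ and ${\mu-1 \choose \mu(\gamma)}$ being absorbed by the normalizations $z^n/n!$ and $1/\mu!$), and recombining $n_1^{l(\gamma)-3}n_2^{l(\delta)-3}$ into $n^{l-3}$ --- is the entire content of the theorem; everything preceding it is bookkeeping. Naming it an Abel-type identity amenable to Lagrange inversion is the right diagnosis but not a derivation: one must actually substitute the auxiliary series $\sum_{k\geq 1}\frac{k^{k+j}}{k!}z^k$ (expressible through powers and derivatives of $T$) for the $p_i$, reduce both sides of the equation to algebraic identities in $T$, and check those identities. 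As written, the proposal shows that the claimed formula has the right shape to satisfy the recursion, not that it does; to be a proof, that computation must be executed.
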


\begin{proof}
\cite{MR1396978}, Theorem $1.1$.
\end{proof}

\section{W-Operator}

\begin{definition}\label{301}
	A variable matrix $X$ is an infinite matrix with variable $X_{ab}$ in the $(a,b)$-entry. Generally, $X:=(X_{ab})_{a,b\geq1}$ and all $X_{ab}$ are assumed to commute with each other.
\end{definition}

\begin{definition}\label{302}
	Define $p_{k}$ the trace of $X^{k}$, i.e., $p_{k}=tr(X^{k})$. $p_{k}$ is a power series in $\mathbb{C}[[X_{ab}]]_{a,b \geq 1}$. $\mathbb{C} [p_{1},p_{2},...]$ is a polynomial ring with infinitely many variables $p_{k}$.
\end{definition}

\begin{definition}\label{303}
	The operator matrix $D$ is an infinite matrix with $D_{ab}$ in the $(a,b)$-entry, where $D_{ab}=\sum\limits_{c=1}^{\infty} X_{ac} \frac{\partial}{\partial X_{bc}}$.
\end{definition}

\begin{definition}\label{304}
	The normal ordered product of $D_{ab}$ and $D_{cd}$ is
	\begin{align*}
		:D_{ab}D_{cd}:=\sum_{e_1,e_2 \geq 1}X_{ae_{1}}X_{ce_{2}} \frac{\partial}{\partial X_{be_{1}}} \frac{\partial}{\partial X_{de_{2}}}.	
	\end{align*}
	Similarly, the normal product $:\prod\limits_{i=1}^{d}D_{a_i b_i}:$ is
	\begin{align*}
		:\prod\limits_{i=1}^{d}D_{a_i b_i}:	= \sum_{e_1,...,e_d \geq 1} (\prod\limits_{i=1}^{d}X_{a_i e_i}\prod\limits_{i=1}^{d}\frac{\partial}{\partial X_{b_i e_i}}).
	\end{align*}
\end{definition}

\begin{definition}\label{305}
	For any positive integer d, we define the W-operator $W([d])$ as
	\begin{align*}
		W([d]):=\frac{1}{d}:tr(D^{d}):.
	\end{align*}
\end{definition}

\begin{theorem}\label{306}
$W([d])$ is a well-defined operator on $\mathbb{C}[p_{1},p_{2},...]$ and it can be written as the sum of $d!$ summations, each of which corresponds to a unique permutation in $S_d$.	
\end{theorem}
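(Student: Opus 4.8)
The plan is to compute the action of $:\mathrm{tr}(D^d):$ directly on the subring $\mathbb{C}[p_1,p_2,\dots]$ and to read off both assertions from the resulting combinatorial expansion. Expanding the trace cyclically and applying Definitions \ref{303} and \ref{304}, I would first record
\begin{align*}
:\mathrm{tr}(D^d): \ = \sum_{a_1,\dots,a_d\ge 1}\ \sum_{e_1,\dots,e_d\ge 1}\Big(\prod_{i=1}^d X_{a_i e_i}\Big)\Big(\prod_{i=1}^d \frac{\partial}{\partial X_{a_{i+1} e_i}}\Big),
\end{align*}
with the cyclic convention $a_{d+1}=a_1$. The essential feature of the normal-ordered form is that the $d$ factors $X_{a_i e_i}$ are plain multiplications, so when the operator is applied to a test monomial $f=p_\lambda$ only the $d$ commuting first-order derivatives $\partial/\partial X_{a_{i+1}e_i}$ actually differentiate $f$.

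To prove well-definedness I would evaluate $\prod_i \partial/\partial X_{a_{i+1}e_i}$ on $p_\lambda$ by the multivariate chain rule. Since every $p_k=\mathrm{tr}(X^k)$ (Definition \ref{302}) and $\partial p_k/\partial X_{bc}=k\,(X^{k-1})_{cb}$, the $d$-fold derivative expands as a sum over set partitions $\pi$ of the index set $\{1,\dots,d\}$: each block $B\in\pi$ is assigned to one factor $p_{\lambda_j}$ and contributes the $|B|$-th derivative of $\mathrm{tr}(X^{\lambda_j})$, which opens that trace-cycle at $|B|$ points into $|B|$ arcs of the form $(X^{m})$. Carrying out the external sum $\sum_{\vec a,\vec e}$ then contracts every free row and column index: the creation factors $X_{a_i e_i}$ together with the arcs join end to end into closed loops, and each closed loop of total length $m$ evaluates to $\mathrm{tr}(X^{m})=p_m$. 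Hence $:\mathrm{tr}(D^d):p_\lambda$ is a polynomial in the $p_m$ alone, with no residual dependence on the individual $X_{ab}$ and no dependence on the matrix size; this is precisely the statement that $W([d])=\frac1d:\mathrm{tr}(D^d):$ restricts to a well-defined operator on $\mathbb{C}[p_1,p_2,\dots]$.

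For the second claim I would organize the same expansion by a single permutation. The data produced by one surviving term consist of a set partition $\pi$ of $\{1,\dots,d\}$ (recording which derivatives land on a common $p$-factor) together with, for each block, the cyclic order in which its arcs are traversed as the opened trace is reconnected. A set partition with a chosen cyclic order on each block is exactly a permutation $\beta\in S_d$ whose cycles are the blocks, and conversely every such $\beta$ arises once; the count matches $\sum_{\pi}\prod_{B\in\pi}(|B|-1)!=d!$. Defining $FS_\beta$ to be the sub-sum of all terms carrying this reconnection pattern then yields $W([d])=\sum_{\beta\in S_d}FS_\beta$, a sum of $d!$ summations indexed by $S_d$. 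As a consistency check, for $d=2$ the identity $\beta=(1)(2)$, with two singleton blocks hitting distinct factors, produces the cut term $\sum_{i,j}ij\,p_{i+j}\,\partial_{p_i}\partial_{p_j}$, while the transposition $\beta=(1\,2)$, with both derivatives on one factor, produces the join term $\sum_{i,j}(i+j)\,p_i p_j\,\partial_{p_{i+j}}$, recovering the cut-and-join operator and the known identity $W([2])=\Delta$.

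The delicate point, and the step I expect to be the main obstacle, is the index-contraction bookkeeping that underlies both claims at once: one must verify that after summing over $\vec a$ and $\vec e$ the arcs and creation factors always reassemble into closed loops, so that traces and only traces survive, and that the loop lengths and multiplicities are recorded faithfully by the cycle type of $\beta$. The cleanest way to control this is through a strand diagram in which each of the $d$ positions contributes one directed strand and one shows that the permutation governing how strands close up ranges bijectively over $S_d$; the subtle part is the treatment of blocks of size $\ge 2$, where the $(|B|-1)!$ distinct cyclic reconnections of a single opened trace-cycle must be matched, without overcounting, to the distinct cyclic orders of a length-$|B|$ cycle of $\beta$.
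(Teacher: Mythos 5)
The paper offers no proof of this theorem at all --- it is a one-line citation to \cite{Sun1}, Theorem 3.15 --- so your proposal is being measured against an argument the paper only references. Your route is the standard and correct one: expand $:\mathrm{tr}(D^d):$ with the cyclic convention $a_{d+1}=a_1$, act on $p_\lambda$ using $\partial p_k/\partial X_{bc}=k(X^{k-1})_{cb}$, observe that the sums over $\vec a,\vec e$ reassemble the opened arcs and the creation factors into closed loops (hence into traces $p_m$, which gives well-definedness), and count the surviving reconnection patterns by $\sum_{\pi}\prod_{B\in\pi}(|B|-1)!=d!$; for $d=3$ this correctly reproduces the $1+3+2=6$ summations of Example \ref{3006}, and your $d=2$ computation is right. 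Two points of friction with the paper's conventions, neither of which affects the truth of the theorem as stated: first, your bijection takes the cycles of $\beta$ to be the blocks of derivative indices, so the identity permutation labels the top-derivative term $ij\,p_{i+j}\partial_{p_i}\partial_{p_j}$, whereas Section 6 of the paper attaches $(12)$ to that term and $(1)(2)$ to $(i+j)p_ip_j\partial_{p_{i+j}}$ --- in the paper's convention the cycles of $\beta$ track the polynomial part (the closed loops), and the differential part is governed by $\beta$ composed with the cyclic shift, which is why the two distinct $3$-cycles $(321)$ and $(123)$ label structurally different summations of $W([3])$; since the theorem only asserts the existence of a bijection with $S_d$ this is a labelling choice, but it must be reconciled before using Definitions \ref{601} and \ref{602}. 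Second, your naming of cut and join is reversed relative to Remark \ref{406}: there the cut operator increases the number of cycles, so $p_ip_j\partial_{p_{i+j}}$ is the cut term and $p_{i+j}\partial_{p_i}\partial_{p_j}$ the join. The step you flag as the main obstacle --- verifying that the contractions always close the arcs into loops with the lengths recorded faithfully --- is indeed where the real work lies and is not yet carried out in your sketch, but the strand-diagram bookkeeping you propose is precisely how this is controlled in the literature, so the plan is sound.
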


\begin{proof}
	It is proved in \cite{Sun1}, Theorem 3.15.
\end{proof}

\begin{example}\label{3006}
The first example is $W([2])$, which is also known as the cut-and-join operator,
\begin{align*}
W([2])= \frac{1}{2}\sum_{i \geq 1}\sum_{j\geq 1}(ijp_{i+j}\frac{\partial^2}{\partial p_i \partial p_j}+(i+j)p_i p_j \frac{\partial}{\partial p_{i+j}}).
\end{align*}
The second example is $W([3])$,
\begin{align*}
		W([3]) = \frac{1}{3}\sum_{i_1,i_2,i_3 \geq 1}
		(&i_1i_2i_3 p_{i_1+i_2+i_3}\frac{\partial^3}{\partial p_{i_1} \partial p_{i_2}\partial p_{i_3}}+  \\
		+&i_1(i_2+i_3)p_{i_1+i_3}p_{i_2}\frac{\partial^2}{\partial p_{i_1} \partial p_{i_2+i_3}}+ \\
		+&i_2(i_1+i_3)p_{i_1+i_2}p_{i_3}\frac{\partial^2}{\partial p_{i_2} \partial p_{i_1+i_3}}+ \\
		+&i_3(i_1+i_2)p_{i_3+i_2}p_{i_1}\frac{\partial^2}{\partial p_{i_3} \partial p_{i_1+i_2}}+ \\
		+&(i_1+i_2+i_3)p_{i_1}p_{i_2}p_{i_3} \frac{\partial}{\partial p_{i_1+i_2+i_3}}+ \\
		+&(i_1+i_2+i_3)p_{i_1+i_2+i_3}\frac{\partial}{\partial p_{i_1+i_2+i_3}}) .
\end{align*}
\end{example}

\begin{definition}\label{307}
	Define a map
	\begin{align*}
	\Phi: \mathbb{C}S_n \rightarrow \mathbb{C}[p_1,p_2,...]
	\end{align*}
	such that for $\sigma \in S_n$, we have
	\begin{align*}
		\Phi(\sigma)=p_{\alpha}=p_{\alpha_1}...p_{\alpha_l},
	\end{align*}
	where $\alpha=(\alpha_1,...,\alpha_m)$ is the partition corresponding to $\sigma$.
\end{definition}
Note $\Phi(\sigma)=\Phi(\sigma')$ if $\sigma$ and $\sigma'$ have the same type $\alpha$, and sometimes we also write
\begin{align*}
\Phi(\alpha)=p_{\alpha}=p_{\alpha_1}...p_{\alpha_l}.
\end{align*}

\begin{definition}\label{308}
	If $\alpha$ is a partition of a positive integer $n$, define the element $K_\alpha \in \mathbb{C}S_n$ as
	\begin{align*}
		K_\alpha = \sum_{\sigma \in S_n, \atop \sigma \text{ is of type } \alpha}\sigma.
	\end{align*}
	Clearly, $K_\alpha$ is in the center of in $\mathbb{C}S_n$.
\end{definition}

For example, $\Phi(K_\alpha)=c_\alpha p{_\alpha}$, where $c_\alpha$ is the number of all $\sigma \in S_n$ of type $\alpha$.

\begin{notation}\label{309}
	Given a partition $\lambda=(\lambda_1,...,\lambda_m)$ of a positive integer $n$, we can write it as
	\begin{align*}
		\lambda=1^{k_1}2^{k_2}...s^{k_s}
	\end{align*}	
	where $k_i$ is the number of times the integer $i$ appears in the partition $\lambda$. For example, if $\lambda=(1^{n-d}d)$, then $K_{1^{n-d}d}$ is a central element in $\mathbb{C}S_n$, which is the sum of all $d$-cycles in $S_n$.
\end{notation}

\begin{proposition}
For any $g \in \mathbb{C}S_n$,
	\begin{align*}
		\Phi(K_{1^{n-2}2}g)=W([2]) \Phi(g).
	\end{align*}
\end{proposition}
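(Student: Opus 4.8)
The plan is to reduce to a single permutation by linearity of both $\Phi$ and $W([2])$, and then match the two terms of the cut-and-join operator (Example \ref{3006}) against the two cases of Lemma \ref{202} by an explicit count of transpositions. So it suffices to prove the identity for $g=\sigma$ a single permutation of type $\alpha=1^{m_1}2^{m_2}\cdots$, where $m_k$ is the number of $k$-cycles, so that $\Phi(\sigma)=p_\alpha=\prod_k p_k^{m_k}$. Since $K_{1^{n-2}2}=\sum_\tau \tau$ runs over all transpositions of $S_n$, the left-hand side is $\Phi(K_{1^{n-2}2}\sigma)=\sum_\tau \Phi(\tau\sigma)=\sum_\tau p_{\mathrm{type}(\tau\sigma)}$. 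By Lemma \ref{202} each transposition $\tau=(a\,b)$ falls into exactly one case: a \emph{join}, when $a,b$ lie in different cycles of $\sigma$ (then $\tau\sigma$ merges two cycles), or a \emph{cut}, when $a,b$ lie in the same cycle (then $\tau\sigma$ splits it). The goal is to show the join contributions reproduce $\tfrac12\sum_{i,j}ij\,p_{i+j}\,\partial_{p_i}\partial_{p_j}$ and the cut contributions reproduce $\tfrac12\sum_{i,j}(i+j)p_ip_j\,\partial_{p_{i+j}}$.

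For the join count I would fix lengths $i,j$ and count transpositions merging an $i$-cycle with a $j$-cycle: choosing one element in some $i$-cycle and one in some $j$-cycle gives $ij\,m_im_j$ such transpositions when $i\neq j$, and $\binom{m_i}{2}i^2$ when $i=j$, each producing the monomial $p_{i+j}\,p_\alpha/(p_ip_j)$ (resp.\ $p_{2i}\,p_\alpha/p_i^2$). On the operator side, $\partial_{p_i}\partial_{p_j}p_\alpha=m_im_j\,p_\alpha/(p_ip_j)$ for $i\neq j$, so that the two ordered pairs $(i,j)$ and $(j,i)$ together with the overall $\tfrac12$ reproduce exactly $ij\,m_im_j$, while $\partial_{p_i}^2 p_\alpha=m_i(m_i-1)p_\alpha/p_i^2$ yields $\tfrac12 i^2 m_i(m_i-1)=\binom{m_i}{2}i^2$ on the diagonal. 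For the cut count I would record that within a single $k$-cycle $(c_1\,c_2\cdots c_k)$ the transposition $(c_a\,c_b)$ with cyclic positions $a<b$ splits it into cycles of lengths $b-a$ and $k-(b-a)$; counting admissible positions then shows a $k$-cycle has exactly $k$ transpositions giving each unordered split $\{i,j\}$ with $i\neq j$, and exactly $k/2$ giving $\{i,i\}$ when $k=2i$. Multiplying by $m_k$ and matching against $\partial_{p_{i+j}}p_\alpha=m_{i+j}\,p_\alpha/p_{i+j}$ reproduces the second operator term, the $\tfrac12$ again being absorbed by the two ordered off-diagonal pairs.

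The main obstacle is the bookkeeping at the diagonal $i=j$ together with the verification of the cut-splitting count, i.e.\ confirming that a $k$-cycle contributes precisely $k$ (resp.\ $k/2$) transpositions per split; I would establish this by the cyclic-position computation above, with the cases $k=3,4$ serving as checks, and then track the factor $\tfrac12$ carefully so that off-diagonal ordered pairs combine correctly while each diagonal term is counted once. Once the join and cut counts are matched with the corresponding operator terms monomial by monomial, summing over all $\tau$ gives $\Phi(K_{1^{n-2}2}\sigma)=W([2])\,p_\alpha=W([2])\,\Phi(\sigma)$, and linearity in $g$ completes the proof.
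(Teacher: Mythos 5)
Your proof is correct, and all the counts check out: for the join part, $ij\,m_im_j$ off-diagonal and $\binom{m_i}{2}i^2$ on the diagonal match $\tfrac12\sum ij\,p_{i+j}\partial_{p_i}\partial_{p_j}$ acting on $p_\alpha$ once the two ordered pairs $(i,j)$, $(j,i)$ are combined; for the cut part, a $k$-cycle indeed admits exactly $j$ position-pairs at cyclic distance $i$ and $i$ at distance $j$, giving $k$ transpositions per unordered split $\{i,j\}$ with $i\neq j$ and $k/2$ when $i=j=k/2$, which matches $\tfrac12\sum(i+j)p_ip_j\partial_{p_{i+j}}$. Note, however, that the paper does not prove this proposition at all: it simply cites Goulden's Proposition 3.1 in \cite{MR1249468}, so there is no in-paper argument to compare against. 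What you have written is essentially the standard self-contained proof of that cited result, and it is consistent with the cut/join dichotomy the paper records in Lemma \ref{202} and Remark \ref{406} (left multiplication by a transposition either splits one cycle into two or merges two into one). The only points worth making fully explicit in a final write-up are the ones you already flag: that $\Phi$ counts fixed points as $1$-cycles so that $p_1$-factors participate in the joins, and the careful separation of the diagonal terms so the overall factor $\tfrac12$ is absorbed by ordered off-diagonal pairs rather than double-counting the diagonal.
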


\begin{proof}
\cite{MR1249468}, Proposition 3.1.
\end{proof}

We generalize this property to $d$-cycles and $W([d])$.
\begin{theorem}\label{310}
	For any $g \in \mathbb{C}S_n$ and any positive integer $d$ such that $2 \leq d \leq n$, we have
	\begin{align*}
		\Phi(K_{1^{n-d}d}g)=W([d]) \Phi(g).
	\end{align*}
\end{theorem}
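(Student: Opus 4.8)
The plan is to prove the identity by diagonalizing both sides over the irreducible blocks of $\mathbb{C}S_n$, i.e. by testing it against the primitive central idempotents. First I would reduce to the case $g \in Z(\mathbb{C}S_n)$. Both maps $g \mapsto \Phi(K_{1^{n-d}d}g)$ and $g \mapsto W([d])\Phi(g)$ are linear, so it suffices to check the identity on a spanning set of a suitable quotient. Writing $P(g)=\frac{1}{n!}\sum_{h \in S_n} hgh^{-1}$ for the projection of $\mathbb{C}S_n$ onto its center, one checks that $\Phi = \Phi \circ P$ (because by Definition \ref{307} the value $\Phi(g)$ depends only on the total coefficient of $g$ on each conjugacy class, and $P$ preserves these class sums) and that $P(K_{1^{n-d}d}g)=K_{1^{n-d}d}P(g)$ (because $K_{1^{n-d}d}$ is central). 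Hence both sides are unchanged when $g$ is replaced by $P(g)$, so it is enough to prove the statement for $g$ in the center, for which I use the basis of primitive central idempotents $e_\lambda$, $\lambda \vdash n$.

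Next I would evaluate both operations on $e_\lambda$. Using $e_\lambda=\frac{f^\lambda}{n!}\sum_{\sigma}\chi^\lambda(\sigma^{-1})\sigma$, where $f^\lambda=\chi^\lambda(1)$, grouping the sum by cycle type, and using $\chi^\lambda(\sigma^{-1})=\chi^\lambda(\sigma)$, Definition \ref{307} gives $\Phi(e_\lambda)=\frac{f^\lambda}{n!}\sum_{\alpha \vdash n} c_\alpha \chi^\lambda(\alpha) p_\alpha = f^\lambda s_\lambda$, where $c_\alpha$ is the size of the class of type $\alpha$, $z_\alpha = n!/c_\alpha$ is the order of the centralizer, and $s_\lambda=\sum_\alpha z_\alpha^{-1}\chi^\lambda(\alpha)\,p_\alpha$ is the Schur function written in the power sums $p_k$. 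On the other hand, since $K_{1^{n-d}d}$ is central, Schur's lemma shows that it acts on the block $e_\lambda \mathbb{C}S_n$ by the scalar central character $\omega^\lambda_d=\frac{c_{1^{n-d}d}\,\chi^\lambda(1^{n-d}d)}{f^\lambda}$, so $K_{1^{n-d}d}e_\lambda=\omega^\lambda_d\, e_\lambda$ and therefore $\Phi(K_{1^{n-d}d}e_\lambda)=\omega^\lambda_d\, f^\lambda s_\lambda$.

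The key input is that the $W$-operator is diagonal in the Schur basis with exactly the same eigenvalue: $W([d])\,s_\lambda=\omega^\lambda_d\, s_\lambda$ for every $\lambda \vdash n$. This is the eigenvalue computation of Mironov, Morozov and Natanzon \cite{MR2864467}; I would either cite it or re-derive it from the $d!$-term description of Theorem \ref{306}, using that $\frac1d:tr(D^d):$ acts on the matrix-model characters $s_\lambda(X)$ through the contents of the boxes of $\lambda$, and that this content statistic for the $d$-cycle class equals $\omega^\lambda_d$. Granting this, $W([d])\Phi(e_\lambda)=W([d])(f^\lambda s_\lambda)=f^\lambda \omega^\lambda_d s_\lambda$, which matches $\Phi(K_{1^{n-d}d}e_\lambda)$ computed above. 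Ranging over $\lambda \vdash n$ and invoking the reduction of the first paragraph completes the proof; the case $d=2$ recovers the cited Proposition, a useful consistency check on the normalization $\frac1d$ in Definition \ref{305}.

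The main obstacle is precisely this eigenvalue identification in the third step: establishing that the normal-ordered operator $\frac1d :tr(D^d):$ acts on $s_\lambda$ by the central character of the $d$-cycle class, with the combinatorial constant matching $\omega^\lambda_d$ exactly. An alternative, more hands-on route avoids idempotents entirely and instead proves $\Phi(K_{1^{n-d}d}\sigma)=W([d])\Phi(\sigma)$ by directly analyzing the effect of left-multiplying $\sigma$ by each $d$-cycle on its disjoint-cycle structure --- a $d$-cycle generalization of the cut/join dichotomy of Lemma \ref{202} --- and matching the resulting cycle-type changes term by term with the $d!$ summands of $W([d])$. This is combinatorially heavier but elementary, and I would fall back on it if the character-theoretic eigenvalue were not available in the precise form needed.
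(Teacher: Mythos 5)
Your argument is correct, but it is not the route the paper takes: the paper gives no proof of Theorem \ref{310} at all, deferring entirely to Theorem 2.12 of \cite{Sun2}, and the evidence within this paper (the reference to Construction 4.11 of \cite{Sun2} inside Construction \ref{403}, and the case analysis of Corollary \ref{405}) indicates that the cited proof is exactly the ``hands-on'' combinatorial argument you describe as your fallback --- analyzing how left-multiplication by a $d$-cycle reshuffles the disjoint-cycle structure of $\sigma$ and matching the $d!$ resulting patterns with the $d!$ summands $FS_\beta$ of $W([d])$. Your primary route is genuinely different and logically sound: the reduction to the center via the averaging projection $P$ is valid (indeed $\Phi(P(\sigma))=\frac{1}{c_\alpha}\Phi(K_\alpha)=p_\alpha=\Phi(\sigma)$, and $P(K_{1^{n-d}d}g)=K_{1^{n-d}d}P(g)$ by centrality), the computations $\Phi(e_\lambda)=f^\lambda s_\lambda$ and $K_{1^{n-d}d}e_\lambda=\omega^\lambda_d e_\lambda$ are standard, and the eigenvalue identity $W([d])s_\lambda=\omega^\lambda_d s_\lambda$ is precisely the content of \cite{MR2864467}, so citing it closes the argument. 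What each approach buys: yours is shorter and makes transparent why the identity holds simultaneously for all $d$ (both sides are diagonalized by the same basis with the same spectrum), but it outsources the real work to the Mironov--Morozov--Natanzon eigenvalue computation, which is nontrivial; the combinatorial route is self-contained and elementary, and --- more importantly for this paper --- the case-by-case bookkeeping it requires is reused directly in Sections 4 and 5 (Construction \ref{403}, Lemma \ref{502}), so the paper gets that analysis ``for free'' from the proof of Theorem \ref{310}, whereas your route would still leave that combinatorial work to be done separately.
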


\begin{proof}
It is proved in \cite{Sun2}, Theorem 2.12.
\end{proof}
\section{Generalized Hurwitz Number}

The simple Hurwitz number $h_k(\alpha)=\Cov_n(1^{n-2}2,...,1^{n-2}2,\alpha)$ is the number of all $n$-fold coverings with $k+1$ branch points, where $k$ of them correspond to transpositions and the last branch point corresponds to a permutation of type $\alpha$. Now, we define a new type of Hurwitz number by replacing all transpositions by $d$-cycles.

\begin{definition}\label{401}
Given positive integers $n,k,d$, we define the $d$-Hurwitz number
\begin{align*}
h_k^{d}(\alpha)=\Cov_n(1^{n-d}d,...,1^{n-d}d,\alpha).
\end{align*}
It is the number of $(k+1)$-tuples $(\delta_1,...,\delta_k,\sigma) \in S^{k}_n$ satisfying the following conditions
\begin{enumerate}
	\item[(1)]  $\delta_i$ are $d$-cycles (or of type $(1^{n-d}d)$), where $1 \leq i \leq k$, and $\sigma$ is of type $\alpha$,
	\item[(2)]  $\delta_1...\delta_k=\sigma$,
	\item[(3)]  The group generated by $\{\delta_1,...,\delta_k\}$ is transitive on the set $\{1,...,n\}$.
\end{enumerate}
Furthermore, we define the minimal $d$-Hurwitz number $h^{d}(\alpha)=h^d_k(\alpha)$, where $k$ satisfies another condition
\begin{enumerate}
\item[(4)] $k$ is minimal with respect to the conditions $(1)$,$(2)$ and $(3)$.
\end{enumerate}
\end{definition}

\begin{remark}\label{4002}
Given any permutation $\sigma \in S_n$ of type $\alpha$, if $\sigma$ cannot be written as the product of $d$-cycles, then $h^d(\alpha)=0$, since Condition (2) in Definition \ref{401} is never satisfied. For example, consider the permutation $\sigma=(1 2)(3) \in S_3$ of type $\alpha=(21)$. We have $h^3(\alpha)=0$, since $(12)$ is an odd permutations and all $3$-cycles are even, so there is no $k$-tuple of $3$-cycles $(\delta_1,...,\delta_k)$ satisfying the second condition in Definition \ref{401}.
\end{remark}

\begin{definition}\label{402}
Given positive integers $n,d$ and a permutation $\sigma \in S_n$ which is of type $\alpha$, define $\mu^d(\sigma)$ to be the minimal number $k$ with respect to conditions $(1)$,$(2)$ and $(3)$ as in condition $(4)$.
\end{definition}
If $\sigma$ and $\sigma'$ are of the same type $\alpha$, then $\mu^d(\sigma)=\mu^d(\sigma')$. This statement can be easily proved by conjugation. So, sometimes we write the minimal number $\mu^d(\sigma)$ as $\mu^d(\alpha)$. The key lemma 4.10 in this section is first calculated by Goulden et al. in \cite{MR1797682} \cite{MR1249468}. Goulden et al. first calculate $\mu^2(\alpha)$ and use the property that any $3$-cycle (or $d$-cycle) can be decomposed as the product of two transpositions (or $d-1$ transpositions) to calculate $\mu^3(\sigma)$ (or $\mu^d(\sigma)$). We use a different method to calculate it. Construction \ref{403} is the most important construction in this section. Also, we only calculate $\mu^3(\sigma)$ in this section.

First, we will focus on how to calculate the product of a $3$-cycle $\omega$ and an arbitrary permutation $\sigma=\rho_1...\rho_l \in S_n$, where $\sigma=\rho_1...\rho_l$ is the decomposition of $\sigma$ into the product of disjoint cycles.
\begin{construction}\label{403}
Assume $\omega=(j_3 \text{ } j_2 \text{ } j_1)$, where $j_1,j_2,j_3$ are distinct integers in $\{1,...,n\}$. We are going to calculate $\omega \sigma$ according to the occurrences of $j_1,j_2,j_3$ in the disjoint cycles appearing in $\sigma$. If we consider the tuple $[j_3,j_2,j_1]$, there are $6$ cases with respect to the tuple $[j_3,j_2,j_1]$ and each one corresponds to a specific element of $S_3$,
\begin{enumerate}
		\item $\sigma=(j_1...)(j_2...)(j_3...)...,$
		\item $\sigma=(j_1...)(j_2...j_3...)...$,
		\item $\sigma=(j_1...j_3...)(j_2...)...$,
		\item $\sigma=(j_1...j_2...)(j_3...)...$,
		\item $\sigma=(j_1...j_2...j_3...)...$,
		\item $\sigma=(j_1...j_3...j_2...)...$.
\end{enumerate}
This has been discussed in \cite{Sun2}, Construction 4.11. Now we go back to the $3$-cycle $\omega$. Clearly, we have
\begin{align*}
\omega=(j_2 \text{ } j_1 \text{ } j_3)=(j_1 \text{ } j_3 \text{ } j_2).
\end{align*}
It means if we write $\omega=(j_3 \text{ } j_2 \text{ } j_1)$, then it corresponds to Case (2), if $\omega=(j_1 \text{ } j_3 \text{ } j_2)$, then it is Case (3) and if $\omega=(j_2 \text{ } j_1 \text{ } j_3)$, then it is Case (4). So, we can combine Case (2),(3),(4). In conclusion, we have $4$ cases with respect to the given $3$-cycle $\omega=(j_3 \text{ } j_2 \text{ } j_1)$,
\begin{enumerate}
		\item $\sigma=(j_1...)(j_2...)(j_3...)...$,
		\item $\sigma=(j_1...j_2...)(j_3...)...$,
		\item $\sigma=(j_1...j_2...j_3...)...$,
		\item $\sigma=(j_1...j_3...j_2...)...$.
\end{enumerate}
Clearly, for any element $\sigma \in S_n$, it falls into one and only one case with respect to $\omega$.

Let's consider Case (2) $\sigma=(j_1\colorbox{red}{...}j_2\colorbox{blue}{...})(j_3\colorbox{green}{...})...$, where the red dots represent the digits after $j_1$ before $j_2$, the blue dots represent the other digits after $j_2$ before $j_1$ (since it is a cycle, so the last element will go back to $j_1$) and the green dots represent the other digits in the cycle of $j_3$.
We use the following steps to calculate $\omega\sigma$:
\begin{enumerate}
	\item[\textbf{Step 1}] Restrict $\sigma=(j_1\colorbox{red}{...}j_2\colorbox{blue}{...})(j_3\colorbox{green}{...})...$ to the element $(j_1 j_2)(j_3) \in S_3$ by forgetting all digits except $j_1$,$j_2$,$j_3$ but preserving the cycle structure. Here, $S_3$ is $Aut\{j_1,j_2,j_3\}$. Denote $\bar{\sigma}=(j_1 j_2)(j_3)$.
	\item[\textbf{Step 2}]  Calculate $(j_3\text{ }j_2\text{ }j_1)\bar{\sigma}$. We have $(j_3\text{ }j_2\text{ }j_1)\bar{\sigma}=(j_1)(j_2 j_3)$.
	\item[\textbf{Step 3}]  Insert all numbers forgotten in the first step into $\omega\bar{\sigma}$, then we get,
	\begin{align*}
		\omega\sigma=(j_1\colorbox{red}{...})(j_2\colorbox{green}{...}j_3\colorbox{blue}{...})... \text{ .}
	\end{align*}
\end{enumerate}
Indeed, this procedure works for all cases. In conclusion, if $\omega=(j_3 \text{ } j_2 \text{ } j_1)$, we have
\begin{align*}
   (1) \quad & \sigma=(j_1...)(j_2...)(j_3...)... & \longrightarrow & \quad \omega\sigma=(j_3...j_2...j_1...)... \text{  ,}\\
   (2) \quad& \sigma=(j_1...j_2...)(j_3...)... & \longrightarrow & \quad \omega\sigma=(j_1...)(j_2...j_3...)... \text{  ,}\\
   (3) \quad& \sigma=(j_1...j_2...j_3...)... & \longrightarrow & \quad\omega\sigma=(j_1...)(j_2...)(j_3...)... \text{  ,}\\
   (4) \quad& \sigma=(j_1...j_3...j_2...)... & \longrightarrow & \quad \omega\sigma=(j_1...j_2...j_3...)... \text{  .}\\
\end{align*}
\end{construction}

\begin{example}
We consider an easy example to see how the procedure works. Let $\omega=(3 \text{ } 2 \text{ } 1)$ and $\sigma=(1 \text{ } 2 \text{ } 4)(3 \text{ }5 \text{ }6)$, so $j_i=i$, $1 \leq i \leq 3$. Then $\bar{\sigma}=(12)(3)$. We have $\omega\bar{\sigma}=(1)(23)$. So, $\omega\sigma=(1)(2 \text{ }4\text{ }3\text{ }5\text{ }6)$.
\end{example}
This process can be generalized to the product of a $d$-cycle with any permutation $\sigma$ in $S_n$.

\begin{corollary}\label{405}
Fix an integer $n$ and an arbitrary permutation $\sigma=\rho_1...\rho_l \in S_n$. We take a $3$-cycle $\omega$,
\begin{enumerate}
		\item if $\sigma$ is of type (1) with respect to $\omega$, then the number of disjoint cycles of $\omega\sigma$ is $l-2$,
		\item if $\sigma$ is of type (2) with respect to $\omega$, then the number of disjoint cycles of $\omega\sigma$ is $l$,
		\item if $\sigma$ is of type (3) with respect to $\omega$, then the number of disjoint cycles of $\omega\sigma$ is $l+2$,
		\item If $\sigma$ is of type (4) with respect to $\omega$, then the number of disjoint cycles of $\omega\sigma$ is $l$.
\end{enumerate}
\end{corollary}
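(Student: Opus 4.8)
The plan is to read off the explicit cycle decompositions of $\omega\sigma$ recorded at the end of Construction \ref{403} and simply count disjoint cycles in each of the four cases. The key structural observation is that left-multiplication by $\omega=(j_3\ j_2\ j_1)$ moves only the three points $j_1,j_2,j_3$; hence every disjoint cycle of $\sigma=\rho_1\ldots\rho_l$ containing none of these three points reappears unchanged as a disjoint cycle of $\omega\sigma$. So only those cycles of $\sigma$ that contain $j_1$, $j_2$ or $j_3$ can be affected, and it suffices to track the net change in the number of cycles among these, adding it to the count of the unaffected cycles.

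For each case I would tabulate the ``before'' and ``after'' cycles among those involving $j_1,j_2,j_3$. In Case (1) these points lie in three distinct cycles of $\sigma$, while Construction \ref{403} gives $\omega\sigma=(j_3\ldots j_2\ldots j_1\ldots)\ldots$, a single cycle on these points; three cycles are thus replaced by one, and the total drops by two to $l-2$. In Case (2) the points occupy two cycles of $\sigma$ (namely $j_1,j_2$ together and $j_3$ apart), and the output $(j_1\ldots)(j_2\ldots j_3\ldots)$ again consists of two cycles, so the total is unchanged at $l$. In Case (3) a single cycle of $\sigma$ is replaced by the three cycles $(j_1\ldots)(j_2\ldots)(j_3\ldots)$, raising the count by two to $l+2$. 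In Case (4) a single cycle is replaced by the single cycle $(j_1\ldots j_2\ldots j_3\ldots)$, leaving the count at $l$.

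The one point requiring care is that this paper counts fixed points as $1$-cycles, so I would verify that the tallies above hold verbatim even when some of the relevant cycles are trivial. This is immediate: each cycle displayed in the output of Construction \ref{403} contains at least one of $j_1,j_2,j_3$, hence is genuinely present and distinct from the others, so there is neither over- nor under-counting. Given this, the corollary reduces to pure bookkeeping on top of Construction \ref{403}, and I expect no genuine obstacle; the entire substance lies in the explicit decompositions of $\omega\sigma$, which Construction \ref{403} already supplies.
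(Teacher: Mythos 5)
Your proposal is correct and is essentially the paper's own argument: the paper's proof is simply the one-line remark that the corollary follows from the calculations in Construction \ref{403}, and your case-by-case count (three cycles to one, two to two, one to three, one to one) together with the observation that cycles avoiding $j_1,j_2,j_3$ are untouched is exactly the bookkeeping that remark leaves implicit.
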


\begin{proof}
It is a consequence of the calculations in Construction \ref{403}.
\end{proof}

\begin{remark}\label{406}
We make a brief review about Lemma \ref{202}, in which $\omega$ is a transposition. If $\omega=(j_2 \text{ } j_1)$, then
\begin{enumerate}
\item if $\sigma=(j_1...j_2...)...$, then $\omega\sigma=(j_1...)(j_2...)...$, (cut-operator)
\item if $\sigma=(j_1...)(j_2...)...$, then $\omega\sigma=(j_1...j_2...)... \text{ .}$ (join-operator)
\end{enumerate}
The cut operator increases the number of disjoint cycles by one and the join operator decreases the number of disjoint cycles by one. $\omega$ being a joint or cut operator is relative to $\sigma$.
\end{remark}

We want to calculate $\mu^3(\sigma)$ for arbitrary $\sigma \in S_n$. First we calculate two special cases.

\begin{lemma}\label{407}
If $n$ is odd, let $\sigma=(n \text{ } n-1 \text{ } n-2 \text{ }... \text{ } 2 \text{ }1 \text{ })$. Then, we have
\begin{align*}
\mu^3(\sigma)=\frac{n-1}{2}.
\end{align*}
If $n$ is even, let $\tilde{\sigma}=( \text{ } n \text{ } n-1 \text{ } ) \text{ } (\text{ } n-2 \text{ } n-3 \text{ }... \text{ } 2 \text{ }1 )$. Then, we have
\begin{align*}
\mu^3(\tilde{\sigma})=\frac{n}{2}.
\end{align*}
\end{lemma}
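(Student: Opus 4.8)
The plan is to prove both equalities by sandwiching $\mu^3$ between a lower bound and an explicit upper bound that coincide. Since $\mu^3(\sigma)$ depends only on the cycle type of $\sigma$ (the conjugation-invariance noted after Definition \ref{402}), I am free to replace $\sigma$ and $\tilde\sigma$ by any permutations of the same type. In the odd case $\sigma$ has type $(n)$, and in the even case $\tilde\sigma$ has type $(n-2,2)$; I will work with the convenient representatives $(1\,2\,\cdots\,n)$ and $(1\,2)(3\,4\,\cdots\,n)$ respectively.

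For the upper bounds I would exhibit explicit transitive factorizations into $3$-cycles of the stated sizes. In the odd case, writing $n = 2m+1$, the chain
\[
(1\,2\,\cdots\,n) = (1\,2\,3)(3\,4\,5)(5\,6\,7)\cdots(n-2\;n-1\;n)
\]
expresses the $n$-cycle as a product of $m = \tfrac{n-1}{2}$ three-cycles; consecutive factors share a point, so the associated ``triangle graph'' is connected and spans $\{1,\dots,n\}$, giving transitivity. In the even case, writing $n = 2m$, I would use
\[
(1\,2)(3\,4\,\cdots\,n) = (1\,2\,3)(2\,3\,4)(4\,5\,6)(6\,7\,8)\cdots(n-2\;n-1\;n),
\]
a product of $\tfrac{n-2}{2}+1 = \tfrac{n}{2}$ three-cycles (the chain on $\{2,\dots,n\}$ builds the $(n-1)$-cycle $(2\,3\,\cdots\,n)$, while the extra factor $(1\,2\,3)$ both attaches the point $1$ and converts the product into the required type); again consecutive factors overlap, so the group generated is transitive. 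Checking that each right-hand product equals the left-hand permutation is a routine computation once the composition convention of Construction \ref{403} is fixed, and these constructions yield $\mu^3(\sigma) \le \tfrac{n-1}{2}$ and $\mu^3(\tilde\sigma) \le \tfrac{n}{2}$.

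For the lower bounds I would argue by connectivity. Given any transitive factorization $\sigma = \delta_1\cdots\delta_k$ into $3$-cycles, form the graph on $\{1,\dots,n\}$ whose edges are the three pairs inside the support of each $\delta_i$. Transitivity of $\langle\delta_1,\dots,\delta_k\rangle$ forces this graph to be connected and to span all $n$ vertices, because each generator moves points only within its own support, so every orbit lies inside a single connected component. A connected graph that is a union of $k$ triangles has at most $2k+1$ vertices --- the first triangle contributes $3$ vertices and each further triangle adds at most $2$ new ones while preserving connectivity --- so $n \le 2k+1$ and hence $k \ge \tfrac{n-1}{2}$. As $k$ is an integer this reads $k \ge \tfrac{n-1}{2}$ when $n$ is odd and $k \ge \tfrac{n}{2}$ when $n$ is even, matching the two upper bounds. (The complementary count via Corollary \ref{405}, that each $3$-cycle changes the number of disjoint cycles by at most $2$, only gives the weaker bound $k \ge \tfrac{n-l(\sigma)}{2}$; it already suffices in the odd case but not in the even case.)

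The main obstacle is the even case: the cycle-counting bound yields only $\tfrac{n-2}{2}$, one short of the claim, so the extra unit must be extracted elsewhere. This is exactly the role of the connectivity argument above, which shows that fewer than $\lceil (n-1)/2\rceil = \tfrac{n}{2}$ triangles cannot connect $n$ vertices; equivalently, in any optimal reduction to the identity one of the $\tfrac{n}{2}$ steps must be a non-splitting (type (2) or (4)) move rather than a cycle-increasing (type (3)) move. Pinning down this forced ``wasted'' step, and verifying that the explicit even factorization indeed realizes it while remaining transitive, is the part requiring the most care.
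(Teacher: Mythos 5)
Your proof is correct, and while the upper bound is essentially the paper's (an explicit chain of pairwise-overlapping $3$-cycles, differing only in labelling and orientation), your lower bound takes a genuinely different route. The paper's proof of minimality invokes Lemma \ref{201}: writing each $3$-cycle as a product of two transpositions sharing a point (so that transitivity of the generated group is preserved), a transitive factorization into $k$ $3$-cycles yields a transitive factorization into $2k$ transpositions, whence $2k \geq \mu^2(\sigma) = n+l-2$; this gives $k \geq \tfrac{n-1}{2}$ for $l=1$ and $k \geq \tfrac{n}{2}$ for $l=2$ in one stroke. You instead prove the type-independent bound $n \leq 2k+1$ by counting vertices of a connected union of $k$ triangles, and recover the even case from integrality of $k$. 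Your argument is more elementary and self-contained (it does not rest on the Goulden--Jackson formula for $\mu^2$), and it cleanly explains why the naive cycle-counting bound $k \geq \tfrac{n-l}{2}$ from Corollary \ref{405} falls one short when $n$ is even; the one small step you should spell out is that a connected union of triangles can be ordered so that each triangle after the first meets the union of its predecessors (pass to the connected intersection graph of the triangles and take a spanning-tree order). The trade-off is that your connectivity bound is tight only for $l \in \{1,2\}$, which is exactly what this lemma needs, whereas the paper's transposition argument gives $k \geq \tfrac{n+l-2}{2}$ for arbitrary $l$ and is the one reused verbatim in the general Lemma \ref{408}; your final paragraph's worry about pinning down a forced ``wasted'' step is already discharged by the integrality observation and needs no further care.
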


\begin{proof}
If $n$ is odd, we can use $\frac{n-1}{2}$ $3$-cycles $\delta_i$, $1 \leq i \leq \frac{n-1}{2}$, to cover $\{1,...,n\}$, i.e. the group generated by $\{\delta_i$, $1 \leq i \leq \frac{n-1}{2}\}$ is transitive on $\{1,...,n\}$,
\begin{align*}
\delta_{\frac{n-1}{2}-i+1}=(2i+1 \text{ } 2i \text{ }2i-1), \quad 1 \leq i \leq \frac{n-1}{2}.
\end{align*}
The product of these $3$-cycles is
\begin{align*}
\sigma=\delta_1...\delta_{\frac{n-1}{2}}=(n \text{ } n-1 \text{ } n-2 \text{ }... \text{ } 2 \text{ }1 \text{ }).
\end{align*}
So we can multiply $\frac{n-1}{2}$ $3$-cycles to get $\sigma$. We have to show $\frac{n-1}{2}$ is the smallest number. In Lemma \ref{201}, we know we have to use at least $n-1$ transpositions to construct $\sigma$. Each $3$-cycle can be considered as the product of two transpositions. Hence, $\frac{n-1}{2}$ is the smallest number of $3$-cycles for $\sigma$ which satisfies the condition (1),(2),(3) in Definition \ref{401}.

Similarly, if $n$ is even, the $3$-cycles we choose are
\begin{align*}
& \delta_1=( \text{ } n \text{ } n-1 \text{ } n-2 \text{ } ) , \\
& \delta_{\frac{n}{2}-i+2}=( \text{ }2i-1 \text{ } 2i-2 \text{ } 2i-3\text{ }), \quad 2 \leq i \leq \frac{n}{2}.
\end{align*}
The product of these $3$-cycles is
\begin{align*}
\tilde{\sigma}=( \text{ } n \text{ } n-1 \text{ } ) \text{ } (\text{ } n-2 \text{ } n-3 \text{ }... \text{ } 2 \text{ }1 \text{ }).
\end{align*}
With a similar argument as in the first case, we have $\mu^3(\tilde{\sigma})=\frac{n}{2}$.
\end{proof}

\begin{remark}\label{4008}
The first case of Lemma \ref{407} tells us when $n$ is odd, we have $\mu^3(\sigma)=\frac{n}{2}$ for any $n$-cycle $\sigma$. This statement is easy to check.

The second case of the lemma works for any $\tilde{\sigma}$ which is the product of two disjoint cycles (not necessary one being a transposition). It means that when $n$ is even and $\tilde{\sigma}$ is a product of arbitrary two disjoint cycles, then we have $\mu^3(\sigma)=\frac{n}{2}$. We leave it as an exercise for the reader.
\end{remark}

\begin{lemma}\label{408}
Let $n$ be a positive integer and let $\sigma$ be a permutation in the alternating group $A_n$, i.e. $\sigma$ is a product of $3$-cycles. We decompose $\sigma=\rho_1...\rho_l$ in $l$ disjoint cycles $\rho_i$, then we have
\begin{enumerate}
	\item[(1)]  if $n$ is odd, $\mu^3(\sigma)=\frac{n-1}{2}+\frac{l-1}{2}$,
	\item[(2)]  if $n$ is even, $\mu^3(\sigma)=\frac{n}{2}+\frac{l-2}{2}$.
\end{enumerate}
In conclusion, we have $\mu^3(\sigma)=\frac{n+l-2}{2}$.
\end{lemma}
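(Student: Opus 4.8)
The plan is to prove the single formula $\mu^3(\sigma)=\frac{n+l-2}{2}$ by establishing matching lower and upper bounds, after first recording that this quantity is an integer. Since $\sigma\in A_n$, its sign $(-1)^{n-l}$ equals $+1$, so $n$ and $l$ have the same parity and $\frac{n+l-2}{2}\in\mathbb{Z}$; moreover the two cases in the statement (namely $n$ odd with $l$ odd, and $n$ even with $l$ even) are exactly the two parity classes, and each rewrites to the common formula by elementary algebra, so it suffices to prove $\mu^3(\sigma)=\frac{n+l-2}{2}$ directly.

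For the lower bound I would reduce to the transposition case and invoke Lemma \ref{201}. Take any transitive factorization $\sigma=\delta_1\cdots\delta_k$ into $k$ three-cycles. Writing each $\delta_i$ as a product of two transpositions yields a factorization of $\sigma$ into $2k$ transpositions. The group generated by these $2k$ transpositions contains each $\delta_i$, hence contains the transitive group $\langle\delta_1,\dots,\delta_k\rangle$, so the transpositions are themselves a transitive factorization of $\sigma$. By Lemma \ref{201} any such factorization has length at least $n+l-2$, whence $2k\geq n+l-2$ and therefore $k\geq\frac{n+l-2}{2}$. Minimizing over all factorizations gives $\mu^3(\sigma)\geq\frac{n+l-2}{2}$.

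For the upper bound I would argue by induction on $l$, using Construction \ref{403} and Corollary \ref{405} to peel off one $3$-cycle at a time. The base cases are $l=1$ (which forces $n$ odd, so $\sigma$ is an $n$-cycle) and $l=2$ (which forces $n$ even); these are settled by Lemma \ref{407} together with Remark \ref{4008}, giving $\frac{n-1}{2}=\frac{n+l-2}{2}$ and $\frac{n}{2}=\frac{n+l-2}{2}$ respectively. For the inductive step with $l\geq 3$, choose representatives $j_1,j_2,j_3$ from three distinct disjoint cycles of $\sigma$ and set $\omega=(j_3\ j_2\ j_1)$; then $\sigma$ is of type (1) with respect to $\omega$, so by Corollary \ref{405} the permutation $\sigma':=\omega\sigma$ has $l-2$ disjoint cycles and again lies in $A_n$. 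By the inductive hypothesis $\sigma'$ has a transitive factorization into $\frac{n+(l-2)-2}{2}=\frac{n+l-4}{2}$ three-cycles; since $\sigma=\omega^{-1}\sigma'$ and $\omega^{-1}$ is a $3$-cycle, prepending $\omega^{-1}$ yields a factorization of $\sigma$ into $\frac{n+l-2}{2}$ three-cycles, with transitivity preserved because we only add a generator. Hence $\mu^3(\sigma)\leq\frac{n+l-2}{2}$, and combined with the lower bound the equality follows.

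The main obstacle is the upper bound, and within it the genuinely constructive content resides entirely in the base cases provided by Lemma \ref{407}; the inductive step is a clean \emph{cut}-type reduction. The points requiring care are that the chosen $\omega$ really produces a type (1) configuration (which needs three distinct cycles, hence the hypothesis $l\geq 3$) and that the reduced permutation $\sigma'$ remains in $A_n$ so that the inductive hypothesis applies. I would also verify the bookkeeping that prepending one $3$-cycle increases the factorization length by exactly one and keeps the generated group transitive; these are routine, but they are where an off-by-one error or a transitivity gap could slip in.
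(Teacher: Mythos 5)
Your proposal is correct and follows essentially the same route as the paper: the upper bound by induction on $l$ (base cases from Lemma \ref{407} and Remark \ref{4008}, inductive step by joining three disjoint cycles with one $3$-cycle via Construction \ref{403} and Corollary \ref{405}), and the lower bound by splitting each $3$-cycle into two transpositions and invoking Lemma \ref{201}. Your write-up is somewhat more explicit than the paper's on the parity/integrality point and on why the $2k$ transpositions still generate a transitive group, but the underlying argument is the same.
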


\begin{proof}
First, we consider the case $n$ is odd and $\sigma \in A_n$. Since $\sigma \in A_n$ is a product of even number transpositions and we know $\mu^2(\sigma)=n+l-2$ by Lemma \ref{201}, so $l$ is an odd number. We will prove the first case by induction on $l$. When $l=1$, it means $\sigma$ is an $n$-cycle. By Lemma \ref{407} and Remark \ref{4008}, we can find $\frac{n-1}{2}$ $3$-cycles such that their products is the $n$-cycle $\sigma$. Hence, the basic step is true. Assume it is true when $l=2k-1$, we will show it is true for $l=2k+1$. We can assume $\sigma$ is a product of at least $3$ disjoint cycles in the induction step, because $l \geq 3$. Say
\begin{align*}
\sigma=\rho_1 \rho_2 \rho_3 \rho_4... \rho_{2k+1}=(j_1...)(j_2...)(j_3...)\rho_4... \rho_{2k+1}.
\end{align*}
where $\rho_1...\rho_{2k+1}$ is the product of disjoint cycles of $\sigma$.
Consider
\begin{align*}
\sigma'=(j_1...j_2...j_3...)\rho_4...\rho_{2k+1},
\end{align*}
which connects the first three disjoint cycles of $\sigma$ and has $2k-1$ disjoint cycles. By induction, we can find $\mu^3(\sigma')=\frac{n+(2k-1)-2}{2}$ $3$-cycles $\delta_1,...,\delta_{\mu^3(\sigma')}$ such that their product is $\sigma'$ and they satisfy the condition (1),(2),(3) in Definition \ref{401}. By Case (5) in Construction \ref{403} and Corollary \ref{405}, we can get $\sigma$ from $\sigma'$ by multiplying a $3$-cycle $\omega=(j_3 \text{ } j_2 \text{ } j_1)$, i.e.
\begin{align*}
\sigma=(j_3 \text{ } j_2 \text{ } j_1)\sigma'.
\end{align*}
Hence, we find $\frac{n+(2k-1)-2}{2}+1=\frac{n+(2k+1)-2}{2}$ $3$-cycles $\omega,\delta_1,...,\delta_{\mu^3(\sigma')}$ such that their product is $\sigma$. It is easy to check the group generated by $\{\omega,\delta_1,...,\delta_{\mu^3(\sigma')}\}$ is transitive on the set $\{1,...,n\}$. The only thing we have to show is the number $\frac{n+(2k+1)-2}{2}$ is minimal.
By Lemma \ref{201}, if we want to use transpositions to construct the permutation $\sigma$, we have to use $n+l-2$ transpositions. A $3$-cycle is a product of two transpositions. Hence, $\frac{n-1}{2}+\frac{l-1}{2}$ is the minimum.

The same argument holds for the case $n$ is even.
\end{proof}

\begin{remark}\label{409}
$\mu^3(\sigma)$ in Lemma \ref{408} can be computed by the Riemann-Hurwitz formula. Let $X,Y$ be two Riemann surfaces with genus $g(X)$ and $g(Y)$ and $\phi:X \rightarrow Y$ is a ramified covering map with degree $N$. Let $p$ be any point in $X$, $e_p$ is the ramification index at the point $p$. Then, the Riemann-Hurwitz formula for the covering map $\phi$ is
\begin{align*}
2g(X)-2=N(2g(Y)-2)+\sum_{p\in X}(e_p -1).
\end{align*}
Now specialize this formula to the following case : $X=Y=\mathbb{C}P^1$ and a connected ramified $n$-covering $\phi:X \rightarrow Y$ with branch points $\{z_k,...,z_1,z_{k+1}\}$ such that $z_i$ corresponds to $3$-cycles $\delta_i$, $1 \leq i \leq k$, and $z_{k+1}$ corresponds to the permutation $\sigma$. The ramification index at $z_i$ is $3$, $1 \leq i \leq k$, and the ramification index at $z_{k+1}$ is $n-l$. In this case, Riemman-Hurwitz formula says
\begin{align*}
2g(\mathbb{C}P^1)-2=n(2g(\mathbb{C}P^1)-2)+\left( n-l+(3-1)k\right).
\end{align*}
From the above formula, we get
\begin{align*}
k=\frac{n+l-2}{2},
\end{align*}
which is exactly $\mu^3(\sigma)$ we calculate in Lemma \ref{408}.
\end{remark}

\section{Generating Function}

\begin{definition}\label{501}
Let $d$ be a positive integer. We define the generating function for minimal $d$-Hurwitz numbers $h^{d}(\alpha)$ in the following way:
\begin{align*}
F_d(z,p_1,p_2,...)=\sum_{n \geq 1}\sum_{\alpha \vdash n}h^{d}(\alpha)\frac{z^n}{n!}\frac{1}{\mu^d(\alpha)!}p_\alpha.
\end{align*}
The notations are similar to those used in the generating function in Lemma \ref{203}. Note that $p_\alpha=\Phi(\alpha)$ by the map in Definition \ref{307}, so the generating function can also be written as
\begin{align*}
F_d(z,p_1,p_2,...)=\sum_{n \geq 1}\sum_{\alpha \vdash n}h^{d}(\alpha)\frac{z^n}{n!}\frac{1}{\mu^d(\alpha)!}\Phi(\alpha).
\end{align*}
\end{definition}

\begin{remark}\label{5000001}
We want to point out that in Lemma \ref{408}, we assume $\sigma$ is a permutation in the alternating group, which means $\sigma$ can always be decomposed as the product of $3$-cycles. If $\sigma$ is not in the alternating group, then $\mu^3(\sigma)$ does not make sense in this case. Indeed, we extend the definition of $\mu^3(\sigma)$ to any permutation $\sigma \in S_n$ as
\begin{align*}
\mu^3(\sigma):=\frac{n+l-2}{2}.
\end{align*}
This change of definition does not change the generating function $F_3$. Because $h^3(\alpha)=0$ if and only if $\mu^3(\alpha)$ does not make sense (see Remark \ref{4002}).
\end{remark}

The aim of this section is to derive a differential equation satisfied by $F_3$ (see Theorem \ref{505}).
\begin{definition}\label{50001}
Let $(\delta_1,...,\delta_k)$ be a $k$-tuple of $d$-cycles in $S_n$ and $\sigma=\delta_1...\delta_k$. We say $(\delta_1,...,\delta_k)$ is a minimal transitive factorization of $\sigma$, if $(\delta_1,...,\delta_k)$ satisfies the condition (2),(3),(4) in Definition \ref{401}. Since $\sigma$ is uniquely determined by $\delta_1,...,\delta_k$, sometimes we say $(\delta_1,...,\delta_k)$ is a (ordered) minimal transitive factorization. Here, "factorization" corresponds to the condition (2), "transitive" corresponds to the condition (3) and "minimal" corresponds to the condition (4).
\end{definition}

\begin{definition}\label{50002}
Let $(\delta_1,...,\delta_k)$ be a $k$-tuple, let $\delta_i \in S_n$ be $d$-cycles and let $\mathcal{S}=\{\delta_1,...,\delta_k\}$ be the corresponding set, $\sigma=\delta_1...\delta_k$. Let $G$ be the group generated by the permutations in $\mathcal{S}$. Let $X_1,...,X_q$ be the connected components of $X=\{1,...,n\}$ with respect to the action of $G$. For each connected component $X_i$, we define the subset $\mathcal{S}_i$ of $\mathcal{S}$ as
\begin{align*}
\mathcal{S}_i=\{\delta \in \mathcal{S} \mid \delta(j) \neq j \text{ for some } j \in X_i\}.
\end{align*}
Denote by $\sigma_i$ the product of the elements in $\mathcal{S}_i$ with respect to the order of the tuple $(\delta_1,...,\delta_k)$. Then, we say that the component $X_i$ corresponds to a transitive factorization of $\sigma_i$. If the elements in $\mathcal{S}_i$ satisfy Condition (3) and (4) in Definition \ref{401} on the set $X_i$, then we say $\mathcal{S}_i$ corresponds to an ordered minimal transitive factorization of $\sigma_i$.
\end{definition}

From now on we restrict to the case of $d=3$.

\begin{lemma}\label{502}
Given any integer $n$ and any permutation $\sigma$ in the alternating group $A_n$ (so that $\sigma$ has a factorization in $3$-cycles), we write
\begin{align*}
\sigma=\rho_1...\rho_l=\delta_1...\delta_{\mu^3(\sigma)},
\end{align*}
where $\rho_1...\rho_l$ is the decomposition of $\sigma$ into disjoint cycles (unique up to reordering) and $\delta_1...\delta_{\mu^3(\sigma)}$ is a product of $3$-cycles, such that the group generated by $\{\delta_i, 1 \leq i \leq \mu^3(\sigma)\}$ is transitive on $\{1,...,n\}$. Say $\delta_1=(j_3 \text{ } j_2 \text{ } j_1)$. If we consider the permutation $\sigma'=\delta_2...\delta_{\mu^3(\sigma)}$, we have the following result with respect to the four cases in Construction \ref{403}:
\begin{enumerate}
    \item if $\sigma=(j_3...j_2...j_1...)\rho_2...\rho_l$, then $\sigma'=(j_1...)(j_2...)(j_3...)\rho_2...\rho_l$. The set $\{1,...,n\}$ has three connected components with respect to the action of the group generated by $\{\delta_2,...,\delta_{\mu^3(\sigma)}\}$, each of which contains one and only one $j_i$. Each connected component corresponds to an ordered minimal transitive factorization;
	\item if $\sigma=(j_1...)(j_2...j_3...)\rho_3...\rho_l$, then $\sigma'=(j_1...j_2...)(j_3...)\rho_3...\rho_l$. The set $\{1,...,n\}$ has two connected components with respect to the action of the group generated by $\{\delta_2,...,\delta_{\mu^3(\sigma)}\}$, one contains $j_3$ and the other contains $j_1$ and $j_2$. Each connected component corresponds to an ordered minimal transitive factorization;
	\item $\sigma=(j_1...)(j_2...)(j_3...)\rho_4...\rho_l$, then $\sigma'=(j_1...j_2...j_3...)\rho_4...\rho_l$. The set $\{1,...,n\}$ is still connected with respect to the group generated by $\{\delta_2,...,\delta_{\mu^3(\sigma)}\}$;
	\item $\sigma=(j_1...j_2...j_3...)\rho_2...\rho_l$, then $\sigma'=(j_1...j_3...j_2...)\rho_2...\rho_l$. But, this case cannot happen.
\end{enumerate}
\end{lemma}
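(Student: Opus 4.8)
The plan is to put all four cases on an equal footing: first read off the cycle structure of $\sigma'$ from Construction \ref{403}, then determine the number of connected components by combining an orbit-merging identity with a counting bound that comes from minimality, and finally extract the per-component minimality from the equality case of that bound. Since $\sigma=\delta_1\sigma'$ with $\delta_1=\omega=(j_3\ j_2\ j_1)$, the passage $\sigma'\mapsto\sigma=\omega\sigma'$ is exactly the operation analyzed in Construction \ref{403}, with $\sigma'$ playing the role of the input permutation there. The four displayed forms of $\sigma$ are precisely the four outputs listed in that construction, and because the four input-forms of $\sigma'$ are exhaustive and mutually exclusive, every $\sigma$ falls into exactly one case with $\sigma'$ determined as stated. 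From the explicit forms (equivalently, Corollary \ref{405}) I would record the number $l'$ of disjoint cycles of $\sigma'$: namely $l'=l+2$ in case (1), $l'=l$ in cases (2) and (4), and $l'=l-2$ in case (3). Thus the genuine content is the component count and the minimality on each component.

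For the component count I would first establish the identity $q=c$, where $q$ is the number of connected components of $\{1,\dots,n\}$ under $G'=\langle\delta_2,\dots,\delta_{\mu^3(\sigma)}\rangle$ and $c$ is the number of distinct $G'$-orbits among $\{j_1,j_2,j_3\}$. Since $G=\langle\delta_1,G'\rangle$ is transitive by hypothesis, and adjoining $\delta_1=(j_3\ j_2\ j_1)$ merges exactly those $G'$-orbits meeting $\{j_1,j_2,j_3\}$ into one orbit while leaving all others untouched, transitivity forces $q-(c-1)=1$, i.e. $q=c$. Because any two of $j_1,j_2,j_3$ lying in a common cycle of $\sigma'$ automatically lie in a common $G'$-orbit, the cycle structure of $\sigma'$ gives the upper bounds $c\le 3$ in case (1), $c\le 2$ in case (2), and $c=1$ in cases (3) and (4).

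The matching lower bound comes from minimality. Each $\delta_t$ ($t\ge 2$) moves three points lying in a single component, so writing $k_i$ for the number of these $3$-cycles acting on the component $X_i$ gives $\sum_i k_i=\mu^3(\sigma)-1$. By Definition \ref{50002} the $3$-cycles acting on $X_i$ form a transitive factorization of $\sigma'|_{X_i}$ into $3$-cycles, so Lemma \ref{408} yields $k_i\ge\frac{1}{2}(m_i+l_i-2)$, where $m_i=|X_i|$ and $l_i$ is the number of cycles of $\sigma'|_{X_i}$. Summing over $i$ and using $\sum m_i=n$, $\sum l_i=l'$, and $\mu^3(\sigma)=\frac{1}{2}(n+l-2)$ gives
\[
q\ \ge\ \tfrac{1}{2}\bigl(l'-l+4\bigr),
\]
which reads $q\ge 3,\,2,\,1,\,2$ in cases (1)--(4) respectively. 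Comparing with the upper bounds $q=c$ from the previous step forces $q=3$ in case (1) and $q=2$ in case (2), identifies the components with the $j_i$ exactly as claimed, and gives $q=1$ in case (3); in case (4) it produces the contradiction $2\le q=c=1$, so case (4) cannot occur. Finally, in cases (1)--(3) the determined value of $q$ turns the displayed inequality into an equality, which forces $k_i=\frac{1}{2}(m_i+l_i-2)=\mu^3(\sigma'|_{X_i})$ for every $i$; hence each component corresponds to an ordered minimal transitive factorization.

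I expect the main obstacle to be the sharp use of the counting bound: setting up the estimate $k_i\ge\mu^3(\sigma'|_{X_i})$ correctly on each component (including singleton fixed-point components, where the cycle convention of Section 2 must be respected so that $l_i$ counts $1$-cycles), and then extracting both the exact component count and, via the equality case, the per-component minimality from a single inequality. The impossibility of case (4) is the crispest payoff of this bookkeeping, since it pits the orbit count $q=c=1$ forced by the cycle structure against the lower bound $q\ge 2$ forced by minimality.
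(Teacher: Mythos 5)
Your proposal is correct, but it takes a genuinely different route from the paper. The paper's proof splits $\delta_1$ into two transpositions, e.g.\ $\delta_1=(j_3\ j_2)(j_2\ j_1)$ in case (1), and then invokes the transposition cut-and-join analysis (Lemma \ref{202}) twice to read off how many components appear; case (4) is excluded by noting that $\sigma'$ there has the same cycle type as $\sigma$, so $\mu^3(\sigma')=\mu^3(\sigma)$, contradicting the fact that $\sigma'$ is exhibited with one fewer $3$-cycle; and the per-component minimality assertions are left to the reader. You instead stay entirely at the level of $3$-cycles: the orbit-merging identity $q=c$ (number of $G'$-components equals the number of distinct $G'$-orbits among $j_1,j_2,j_3$) gives upper bounds on $q$ from the cycle structure of $\sigma'$, while applying Lemma \ref{408} componentwise in the sense of Definition \ref{50002} gives the matching lower bound $q\ge\frac{1}{2}(l'-l+4)$, and the equality case of that single inequality simultaneously pins down $q$, identifies which $j_i$ lies in which component, kills case (4), and proves the per-component minimality. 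Your computation checks out in all four cases (the bound reads $3,2,1,2$ against $c\le 3,2,1,1$). What your approach buys is uniformity and completeness: the minimality of each restricted factorization, which the paper asserts with ``we leave it to the reader,'' falls out of the equality case for free, and case (4) is handled by the same mechanism rather than by a separate type-comparison argument. What the paper's approach buys is brevity and reuse of the already-established transposition machinery of Section 2. One small point worth making explicit if you write this up: the restriction of the ordered factorization to a component $X_i$ really is a transitive factorization of $\sigma'|_{X_i}$ because $3$-cycles supported on distinct components commute, and $\sigma'|_{X_i}$ lies in the alternating group of $X_i$ so that Lemma \ref{408} applies.
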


\begin{proof}
In this proof, we consider the $3$-cycle $\delta_1$ as the product of two transpositions.
\begin{enumerate}
\item[-] For Case (1), we write $\delta_1$ as the product of transpositions
\begin{align*}
\delta_1=(j_3 \text{ } j_2)(j_2 \text{ } j_1).
\end{align*}
In this case, both $(j_3 \text{ } j_2)$ and $(j_2 \text{ } j_1)$ are join-operators, more precisely, $(j_2 \text{ } j_1)$ is the join operator relative to $\sigma'$ and $(j_3 \text{ } j_2)$ is the joint operator relative to $(j_2 \text{ } j_1)\sigma'$. So, by Lemma \ref{202}, the set $\{1,...,n\}$ has three connected component with respect to the action of the group generated by $\{\delta_i,2 \leq i \leq \mu^3(\sigma)\}$. The last statement follows by an easy calculation of the $3$-cycles. We leave it to the reader to check.
\item[-] For Case (2), we consider $\delta_1=(j_1\text{ }j_3)(j_2\text{ }j_3)$. One transposition is cut-operator and the other is join-operator. With a similar discussion as in Case (1), we get the consequence.
\item[-] In Case (3), both transpositions $(j_3 \text{ } j_2),(j_2 \text{ } j_1)$ are cut-operators.
\item[-] Case (4) cannot happen, because permutations of the same type $\alpha$ have the same minimal value $\mu^3(\alpha)$. In this case, $\sigma'$ is of the same type as $\sigma$. Hence, $\mu^3(\sigma')=\mu^3(\sigma)$. But, $\delta_1\sigma'=\sigma$, it is a contradiction with the minimality of $\sigma=\delta_1...\delta_{\mu^3(\sigma)}$. Hence, this case cannot happen.
\end{enumerate}
\end{proof}

\begin{definition}\label{5001}
Given any permutation $\sigma \in A_n$ and any minimal transitive factorization $(\delta_1,...,\delta_k)$ of $\sigma$ in $3$-cycles, let $\sigma'=\delta_1^{-1}\sigma$. We say $(\sigma,\delta_1)$ or $(\sigma,\sigma')$ is of type $i$, if $\sigma$ and $\delta_1$ corresponds to Case (i) in Lemma \ref{502}, $1 \leq i \leq 3$.
\end{definition}

\begin{definition}\label{5002}
Given a positive integer $n$, let $\alpha$ be a partition of $n$. We define
\begin{align*}
& \mathcal{A}^3 (\alpha)=\{(\delta_1,...,\delta_k,\sigma) \mid \sigma \text{ is of type } \alpha,
(\delta_1,...,\delta_k) \\ & \text{ is a minimal transitive factorization of } \sigma \text{ in 3-cycles}\},\\
& \tilde{\mathcal{A}}^3(\alpha)=\{(\delta_2,...,\delta_k,\sigma) \mid (\sigma\delta_k^{-1}...\delta_2^{-1},\delta_2,...,\delta_k,\sigma) \in \mathcal{A}^3(\alpha)\}.
\end{align*}
\end{definition}

Of course, there is an obvious bijection between $\mathcal{A}^3 (\alpha)$ and $\tilde{\mathcal{A}}^3(\alpha)$:
\begin{align*}
(\delta_1,...,\delta_k,\sigma) \leftrightarrow (\delta_2,...,\delta_k,\sigma).
\end{align*}

\begin{definition}
The subset $\mathcal{A}^3_i (\alpha)$ of $\mathcal{A}^3 (\alpha)$, $1 \leq i \leq 3$, is defined as
\begin{align*}
\mathcal{A}^3_i (\alpha)=\{(\delta_1,...,\delta_k,\sigma) \in \mathcal{A}^3 (\alpha) \mid (\sigma,\delta_1) \text{ is of type i}\}.
\end{align*}
We define the subset $\tilde{\mathcal{A}}^3_i(\alpha)$ of $\tilde{\mathcal{A}}^3(\alpha)$ similarly.
\end{definition}

\begin{remark}\label{5003}
By the definition of $h^3(\alpha)$, we have
\begin{align*}
h^3(\alpha)=|\mathcal{A}^3(\alpha)|=|\tilde{\mathcal{A}}^3(\alpha)|.
\end{align*}
Also, we have disjoint unions
\begin{align*}
\mathcal{A}^3(\alpha)=\bigcup_{i=1}^{3}\mathcal{A}^3_i(\alpha), \quad \tilde{\mathcal{A}}^3(\alpha)=\bigcup_{i=1}^{3}\tilde{\mathcal{A}}^3_i(\alpha),
\end{align*}
and hence,
\begin{align*}
|\mathcal{A}^3(\alpha)|=\sum_{i=1}^3 |\mathcal{A}^3_i(\alpha)|, \quad |\tilde{\mathcal{A}}^3(\alpha)|=\sum_{i=1}^3 |\tilde{\mathcal{A}}^3_i(\alpha)|.
\end{align*}
Hence, we can write the generating function $F_3(z,p_1,p_2,...)$ as
\begin{align*}
F_3=\sum_{i=1}^3 (F_3)_i,
\end{align*}
where
\begin{align*}
(F_3)_i=\sum_{n \geq 1}\sum_{\alpha \vdash n}|\mathcal{A}^3_i(\alpha)|\frac{z^n}{n!}\frac{1}{\mu^3(\alpha)!}p_\alpha.
\end{align*}
\end{remark}

\begin{definition}
Let $\alpha'$ be a partition of $n$. We define another two type of sets
\begin{align*}
& \mathcal{B}^3_i(\alpha')=\{(\delta_2,...,\delta_k,\sigma') \mid \sigma'=\delta_2...\delta_k, \sigma' \text{ is of type } \alpha' \\ & \text{ and } (\delta_2,...,\delta_k,\sigma) \in \tilde{\mathcal{A}}^3_i(\alpha) \text{ for some permutation } \sigma \},\\
& \mathcal{B}^3_i(\alpha',\alpha)=\{(\delta_2,...,\delta_k,\sigma') \mid (\delta_2,...,\delta_k,\sigma') \in \mathcal{B}^3_i(\alpha') \text{ and } \\ & (\delta_2,...,\delta_k,\sigma) \in \tilde{\mathcal{A}}^3_i(\alpha) \text{ for some permutation } \sigma \text{ of type } \alpha\},
\end{align*}
where $1 \leq i \leq 3$.
\end{definition}

\begin{remark}
We have
\begin{align*}
\mathcal{B}_i^3(\alpha')\bigcap\mathcal{B}_j^3(\alpha')=\emptyset, \quad i \neq j, 1 \leq i,j \leq 3.
\end{align*}
This follows from Lemma \ref{502}. Indeed, fix any $k$-tuple $(\delta_2,...,\delta_k,\sigma')$, $\sigma'=\delta_2...\delta_k$, such that $(\delta_2,...,\delta_k,\sigma) \in \mathcal{A}^3(\alpha)$ for $\sigma=\delta_1\sigma'$ of type $\alpha$. If the vertex set has $3$ connected components with respect to the group $\{\delta_2,...,\delta_k\}$, then $(\delta_2,...,\delta_k,\sigma') \in \mathcal{B}_1^3(\alpha')$. If its vertex set has two connected components, then $(\delta_2,...,\delta_k,\sigma') \in \mathcal{B}_2^3(\alpha')$. If its vertex set has only one connected component, then $(\delta_2,...,\delta_k,\sigma') \in \mathcal{B}_3^3(\alpha')$.
\end{remark}

\begin{remark}
One way of thinking about $F_3$ is that it can be obtained from a "set valued" generating functions in the following way. Given a specific set $\mathcal{A}^3(\alpha)$, $\alpha \vdash n$, the elements in this set are $(k+1)$-tuples $(\delta_1,...,\delta_k,\sigma)$. The parameter corresponding to this set is $\frac{z^n}{n!}\frac{1}{\mu^3(\alpha)!}p_\alpha$, where $z$ corresponds to the integer $n$, $\mu^3(\alpha)$ corresponds to the number of $3$-cycles $k$ and $p_\alpha=\Phi(\alpha)$ corresponds to the permutation $\sigma$. We take the sum over all partitions. We will get the "set-valued" generating function
\begin{align*}
\sum_{n \geq 1}\sum_{\alpha \vdash n} \mathcal{A}^3(\alpha) \frac{z^n}{n!}\frac{1}{\mu^3(\alpha)!}p_\alpha.
\end{align*}
Since every set is finite, we can take the cardinality of each set, and we get the generating function $F_3$ in Definition \ref{501}.

In this section, we define several generating functions for the cardinalities of different sets similarly, for example the generating functions $\widetilde{F}_3$ for $|\tilde{\mathcal{A}}^3(\alpha)|$ in Definition \ref{5004} and $(\bar{F}_3)_i$ for $| \mathcal{B}^3_i(\alpha')|$ in Definition \ref{5006}.
\end{remark}

\begin{definition}\label{5004}
We define another generating function
\begin{align*}
\widetilde{F}_3(z,u,p_1,p_2,...)=\sum_{n \geq 1}\sum_{\alpha\vdash n}|\mathcal{A}^3(\alpha)|\frac{z^n}{n!}\frac{u^{\mu^3(\alpha)}}{\mu^3(\alpha)!}p_\alpha.
\end{align*}
\end{definition}
We add another parameter $u$ compared to the generating function $F_3$. Here, the exponent of $u$ indicates the number of $3$-cycles (not including $\sigma$) in $(\delta_1,...,\delta_k,\sigma) \in \mathcal{A}^3(\alpha)$. Similar to the definition of $(F_3)_i$ in Remark \ref{5003}, we define $(\widetilde{F}_3)_i$ and
\begin{align*}
\widetilde{F}_3=\sum_{i=1}^3 (\widetilde{F}_3)_i.
\end{align*}
\begin{remark}\label{5005}
Since the number of $3$-cycles in elements in $\tilde{\mathcal{A}}^3(\alpha)$ is $\mu^3(\alpha)-1$, it is natural to interpret the series $\frac{\partial \widetilde{F}_3}{\partial u}$ as generating series for $|\tilde{\mathcal{A}}^3(\alpha)|$,
\begin{align*}
\frac{\partial \widetilde{F}_3}{\partial u}=\sum_{n \geq 1}\sum_{\alpha\vdash n}|\tilde{\mathcal{A}}^3(\alpha)|\frac{z^n}{n!}\frac{u^{\mu^3(\alpha)-1}}{(\mu^3(\alpha)-1)!}p_\alpha.
\end{align*}
Similarly, we define the generating series $(\frac{\partial \widetilde{F}_3}{\partial u})_i$ for  $|\tilde{\mathcal{A}}^3_i(\alpha)|$.
\end{remark}

\begin{lemma}\label{50005}
Let $(\delta_2,...,\delta_k,\sigma')$ be a $k$-tuple of permutations. Then, we have\\
(1) if $(\delta_2,...,\delta_k,\sigma') \in \mathcal{B}^3_1(\alpha')$, then $k-1=\mu^3(\sigma')-2$,\\
(2) if $(\delta_2,...,\delta_k,\sigma') \in \mathcal{B}^3_2(\alpha')$, then $k-1=\mu^3(\sigma')-1$,\\
(3) if $(\delta_2,...,\delta_k,\sigma') \in \mathcal{B}^3_3(\alpha')$, then $k-1=\mu^3(\sigma')$.
\end{lemma}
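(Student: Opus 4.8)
The plan is to reduce the entire statement to the cycle-counting formula of Lemma \ref{408} combined with the case analysis of Lemma \ref{502}. First I would unwind the definitions. If $(\delta_2,\dots,\delta_k,\sigma') \in \mathcal{B}^3_i(\alpha')$, then by definition there exist a permutation $\sigma$ of type $\alpha$ and a $3$-cycle $\delta_1$ with $(\delta_1,\delta_2,\dots,\delta_k,\sigma) \in \mathcal{A}^3_i(\alpha)$; in particular $(\delta_1,\dots,\delta_k)$ is a minimal transitive factorization of $\sigma$, so that $k = \mu^3(\sigma) = \mu^3(\alpha)$ and $\sigma' = \delta_1^{-1}\sigma = \delta_2\cdots\delta_k$. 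Writing $l = l(\sigma)$ for the number of disjoint cycles of $\sigma$, Lemma \ref{408} (in the extended form of Remark \ref{5000001}) gives $k = \mu^3(\sigma) = \frac{n+l-2}{2}$.

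The heart of the argument is then to read off $l(\sigma')$ from the three surviving cases of Lemma \ref{502}. In Case (1), $\sigma$ keeps $j_1,j_2,j_3$ inside a single cycle while $\sigma'$ splits that cycle into three, so $l(\sigma') = l+2$. In Case (2), the passage from $\sigma$ to $\sigma'$ only changes which pair among the $j_i$ shares a cycle and leaves the total count unchanged, so $l(\sigma') = l$. In Case (3), $\sigma$ has $j_1,j_2,j_3$ in three distinct cycles which $\sigma'$ merges into one, so $l(\sigma') = l-2$. Case (4) of Lemma \ref{502} does not occur, so these three cases are exhaustive for elements of the $\mathcal{B}^3_i(\alpha')$.

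Applying $\mu^3(\sigma') = \frac{n+l(\sigma')-2}{2}$ in each case yields $\mu^3(\sigma') = \frac{n+l-2}{2}+1 = k+1$ in Case (1), $\mu^3(\sigma') = \frac{n+l-2}{2} = k$ in Case (2), and $\mu^3(\sigma') = \frac{n+l-2}{2}-1 = k-1$ in Case (3). Rearranging gives $k-1 = \mu^3(\sigma')-2$, $k-1 = \mu^3(\sigma')-1$, and $k-1 = \mu^3(\sigma')$ respectively, which are exactly the three claimed identities.

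The one point that requires care, and the only place where anything could go wrong, is ensuring that $\mu^3(\sigma')$ is read through the extended definition of Remark \ref{5000001} and not as a genuine minimal transitive factorization length. Indeed, in Cases (1) and (2) the permutations $\delta_2,\dots,\delta_k$ do not act transitively on $\{1,\dots,n\}$, so $(\delta_2,\dots,\delta_k)$ is not a minimal transitive factorization of $\sigma'$ on all of $\{1,\dots,n\}$; it is the purely combinatorial quantity $\frac{n+l(\sigma')-2}{2}$ that enters the identities, and this is precisely what Lemma \ref{408} computes. Once this interpretation is fixed, the proof is a direct substitution with no remaining obstacle.
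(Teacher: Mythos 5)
Your proof is correct and follows essentially the same route as the paper: identify $k=\mu^3(\sigma)$ from minimality, track the change in the number of disjoint cycles between $\sigma$ and $\sigma'$ via the case analysis of Lemma \ref{502}, and convert cycle counts to $\mu^3$-values through the formula of Lemma \ref{408}. The paper writes this out only for Case (1) and declares the others similar, whereas you carry out all three cases explicitly; your closing caveat is harmless but not strictly needed, since $\sigma'=\delta_2\cdots\delta_k$ is always a product of $3$-cycles and hence lies in $A_n$, where Lemma \ref{408} applies directly.
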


\begin{proof}
We only give the proof of the first case. If $(\delta_2,...,\delta_k,\sigma') \in \mathcal{B}^3_1(\alpha')$, it should correspond to some $k$-tuple $(\delta_2,...,\delta_k,\sigma)$ such that $(\sigma,\sigma')$ is of type one (See Definition \ref{5001}). Hence, $\mu^3(\sigma)=k$. By Lemma \ref{408} and Case (1) in Lemma \ref{502}, we know $\mu^3(\sigma')=\mu^3(\sigma)+1$, because $\sigma'$ has two more disjoint cycles than $\sigma$. Hence, we have
\begin{align*}
\mu^3(\sigma')-2=\mu^3(\sigma)+1-2=k-1.
\end{align*}
\end{proof}

We give the following definition of the generating functions for $|\mathcal{B}_i^3(\alpha')|$ based on the above lemma. Again, the exponent of $u$ indicates the number of $3$-cycles in the $k$-tuple, which is an element in the set $\mathcal{B}^3_i(\alpha')$.

\begin{definition}\label{5006}
We define the generating function  $(\bar{F}_3)_1$ for $|\mathcal{B}_1^3(\alpha')|$ as
\begin{align*}
(\bar{F}_3)_1=\sum_{n \geq 1}\sum_{\alpha' \vdash n}|\mathcal{B}_1^3(\alpha')|\frac{z^n}{n!}\frac{u^{\mu^3(\alpha')-2}}{(\mu^3(\alpha')-2)!}p_{\alpha'}.
\end{align*}
Similarly, we can define the generating function for $|\mathcal{B}_i^3(\alpha')|$, $i=2 \text{ or }3$,
\begin{align*}
(\bar{F}_3)_2=\sum_{n \geq 1}\sum_{\alpha' \vdash n}|\mathcal{B}_2^3(\alpha')|\frac{z^n}{n!}\frac{u^{\mu^3(\alpha')-1}}{(\mu^3(\alpha')-1)!}p_{\alpha'},
\end{align*}
and
\begin{align*}
(\bar{F}_3)_3=\sum_{n \geq 1}\sum_{\alpha' \vdash n}|\mathcal{B}_3^3(\alpha')|\frac{z^n}{n!}\frac{u^{\mu^3(\alpha')}}{\mu^3(\alpha')!}p_{\alpha'}.
\end{align*}
\end{definition}

\begin{definition}
Given two partitions $\alpha \vdash n_1$, $\beta \vdash n_2$, we define $\alpha \bigcup \beta$ as the partition of $n_1+n_2$, whose parts are those of $\alpha$ and $\beta$, arranged in descending order.
\end{definition}

The next remark is about how to give an order to the connected components of $\{1,...,n\}$ with respect to the group generated by $\{\delta_2,...,\delta_k\}$.

\begin{remark}\label{500008}
Given $(\delta_2,...,\delta_k,\sigma') \in \mathcal{B}_1^3(\alpha')$, $\alpha' \vdash n$, the vertex set $\{1,...,n\}$ has three connected components with respect to the group generated by $\{\delta_2,...,\delta_k\}$. Recall the notations in Definition \ref{50002}, let $X_1,X_2,X_3$ be the three connected components with respect to the group generated by $\{\delta_2,...,\delta_k\}$, where $X_1$ is the connected component containing $1$, $X_2$ contains the smallest number in $\{1,...,n\}/X_1$ and $X_3$ is the third component. Clearly, this gives a well-defined order on the connected components of $\{1,...,n\}$. We call it the canonical order. For each permutation $\varepsilon \in S_3$, we can define a new order on the three connected components:
\begin{align*}
X_1^{\varepsilon},X_2^{\varepsilon},X_3^{\varepsilon},
\end{align*}
where $X_i^{\varepsilon}=X_{\varepsilon(i)}$.
\end{remark}

We will give some definitions which will be used in Construction \ref{50008}, Lemma \ref{5007}, \ref{5009} and \ref{5010}.

\begin{definition}
We define the following sets
\begin{align*}
& \mathcal{OB}_1^3(\alpha')=\{(\delta_2,...,\delta_k,\sigma',\varepsilon) \mid (\delta_2,...,\delta_k,\sigma') \in \mathcal{B}_1^3(\alpha'), \varepsilon \in S_3\}, \\
& \mathcal{OB}_1^3(\alpha',\alpha)=\{(\delta_2,...,\delta_k,\sigma',\varepsilon) \mid (\delta_2,...,\delta_k,\sigma') \in \mathcal{B}_1^3(\alpha',\alpha), \varepsilon \in S_3\}.
\end{align*}
\end{definition}
$\mathcal{OB}_1^3(\alpha')$ is the set of elements in $\mathcal{B}_1^3(\alpha')$ with a particular order $\varepsilon$ on the connected components of its vertex set and the same for $\mathcal{OB}_1^3(\alpha',\alpha)$.

\begin{remark}
Given any partition of $\alpha' \vdash n$, we have
\begin{align*}
\mathcal{OB}^3_i(\alpha')= \bigcup_{\alpha \vdash n} \mathcal{OB}^3_i(\alpha',\alpha).
\end{align*}
The union here is not disjoint. Indeed, consider the following example
\begin{align*}
& \delta_2=(1 \text{ } 2 \text{ } 3),\quad \delta_3=(4 \text{ } 5 \text{ } 6),\quad \delta_4=(7 \text{ } 8 \text{ } 9),\quad \delta_5=(9 \text{ } 8 \text{ } 10), \\
& \alpha'=(1 3^3), \quad \sigma'=\delta_2\delta_3\delta_4\delta_5=(1 \text{ } 2 \text{ } 3)(4 \text{ } 5 \text{ } 6)(7 \text{ } 8 \text{ } 10)(9).
\end{align*}
Clearly, $(\delta_2,\delta_3,\delta_4,\delta_5,\sigma') \in \mathcal{OB}^3_1(\alpha')$. $\{1,...,10\}$ has three connected component with respect to the group generated by $\{\delta_2,...,\delta_5\}$. They are
\begin{align*}
X_1=\{1,2,3\},\quad X_2=\{4,5,6\},\quad X_3=\{7,8,9,10\}.
\end{align*}
Let $\delta_1=(1 \text{ } 4 \text{ }7)$ and $\tilde{\delta}_1=(1 \text{ } 4 \text{ }9)$. Then, we have
\begin{align*}
& \sigma=\delta_1\sigma'=(1\text{ } 2 \text{ }3 \text{ }4 \text{ }5 \text{ }6 \text{ }7 \text{ }8\text{ } 10)(9),\\
& \widetilde{\sigma}=\widetilde{\delta}_1\sigma'=(1 \text{ }2 \text{ }3 \text{ }4 \text{ }5 \text{ }6 \text{ }9)(7\text{ } 8\text{ } 10).
\end{align*}
Clearly, $\sigma$ and $\tilde{\sigma}$ are of different types: $\sigma$ is of type $\alpha=(9 \text{ } 1)$ and $\widetilde{\sigma}$ is of type $\widetilde{\alpha}=(7 \text{ } 3)$. Hence,
\begin{align*}
(\delta_2,\delta_3,\delta_4,\delta_5,\sigma') \in \mathcal{OB}^3_1(\alpha',\alpha) \bigcap\mathcal{OB}^3_1(\alpha',\widetilde{\alpha}).
\end{align*}
\end{remark}

\begin{lemma}\label{5007}
We have
\begin{align*}
& (\bar{F}_3)_1=\frac{1}{3!}(\tilde{F}_3)^3,\\
& (\bar{F}_3)_2=\frac{1}{2!}(\tilde{F}_3)^2, \\
& (\bar{F}_3)_3=\sum_{n \geq 1}\sum_{\alpha \vdash n, \atop \text{ at least one } \alpha_i \geq 3}h^{3}(\alpha)\frac{z^n}{n!}\frac{u^{\mu^3(\alpha)}}{\mu^3(\alpha)!} p_\alpha.
\end{align*}
\end{lemma}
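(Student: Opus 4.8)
The plan is to establish the three identities separately: the third is a direct bijection, while the first two are instances of the exponential (product) formula for the generating functions, so I would handle them together.

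For $(\bar{F}_3)_3$ I would argue as follows. By Case (3) of Lemma \ref{502}, an element of $\mathcal{B}^3_3(\alpha')$ is a tuple $(\delta_2,\dots,\delta_k,\sigma')$ for which $\langle \delta_2,\dots,\delta_k\rangle$ is already transitive on $\{1,\dots,n\}$, and by Lemma \ref{50005}(3) it uses exactly $k-1=\mu^3(\sigma')$ three-cycles; hence $(\delta_2,\dots,\delta_k)$ is itself a minimal transitive factorization of $\sigma'$, i.e. the tuple already lies in $\mathcal{A}^3(\alpha')$. Conversely, given any minimal transitive factorization of a $\sigma'$ whose type $\alpha'$ has a part $\geq 3$, I would pick three points $j_1,j_2,j_3$ inside one cycle of $\sigma'$ and, using Case (3) of Construction \ref{403}, split that cycle with $\delta_1=(j_3\ j_2\ j_1)$; the formula of Lemma \ref{408} (equivalently Corollary \ref{405}) then gives $\mu^3(\delta_1\sigma')=\mu^3(\sigma')+1=k$, so $(\delta_1,\dots,\delta_k)$ is a minimal transitive factorization of $\sigma=\delta_1\sigma'$ exhibiting the tuple as a type-$3$ element. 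Thus $|\mathcal{B}^3_3(\alpha')|=h^3(\alpha')$ when some $\alpha'_i\geq 3$; if every part is $\leq 2$ then $\sigma'$ has no cycle of length $\geq 3$, no admissible $\delta_1$ exists, and $|\mathcal{B}^3_3(\alpha')|=0$. Feeding this into Definition \ref{5006} gives the stated sum over partitions with at least one part $\geq 3$.

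For $(\bar{F}_3)_1$ and $(\bar{F}_3)_2$ the idea is the exponential formula. By Lemma \ref{502}, a type-$i$ tuple $(\delta_2,\dots,\delta_k)$ splits along the $i$ connected components $X_1,\dots,X_i$ of $\langle\delta_2,\dots,\delta_k\rangle$: each three-cycle moves points in exactly one component, so the tuple breaks into $i$ ordered subtuples, and on each $X_t$ the restriction is an ordered minimal transitive factorization of $\sigma'|_{X_t}$, i.e. an element of $\mathcal{A}^3(\beta_t)$ with $\beta_t$ the type of $\sigma'|_{X_t}$. The data recovering $(\delta_2,\dots,\delta_k,\sigma')$ from its pieces is the partition of $\{1,\dots,n\}$ into blocks $X_t$, a factorization on each block, and the shuffle recording how the $i$ ordered subtuples interleave. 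I would make the components ordered by passing to $\mathcal{OB}^3_i(\alpha')$ (as in the $i=3$ case recorded before the statement, using $S_i$ in place of $S_3$), where $|\mathcal{OB}^3_i(\alpha')|=i!\,|\mathcal{B}^3_i(\alpha')|$ because the blocks carry disjoint label sets and hence admit exactly $i!$ orderings.

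The core computation is then to show that the generating function of $|\mathcal{OB}^3_i(\alpha')|$, weighted as in Definition \ref{5006}, equals $(\widetilde{F}_3)^i$; dividing by $i!$ yields $(\bar{F}_3)_i=\tfrac{1}{i!}(\widetilde{F}_3)^i$. Here the two exponential structures of $\widetilde{F}_3$ (Definition \ref{5004}) do all the work: the factor $z^n/n!$ converts the choice of block sizes into the multinomial $\binom{n}{n_1,\dots,n_i}$ of label distributions, while the factor $u^{\mu}/\mu!$ converts the choice of per-block three-cycle counts into the multinomial counting shuffles of the ordered subtuples, where $\mu=k-1$ is the total number of three-cycles (the exponent appearing in Definition \ref{5006}, as computed in Lemma \ref{50005}); the factor $\prod_t|\mathcal{A}^3(\beta_t)|$ counts the factorizations block-by-block, and $p_{\beta_1}\cdots p_{\beta_i}=p_{\alpha'}$ collects the monomials. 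The step I expect to be the main obstacle is the reconstruction (surjectivity) direction: given $i$ minimal transitive factorizations on disjoint blocks together with a shuffle, I must exhibit a single three-cycle $\delta_1$ gluing the blocks into one orbit so that the assembled tuple genuinely lies in $\mathcal{B}^3_i(\alpha')$. For $i=1$ this means marking one point in each of the three components and setting $\delta_1=(j_3\ j_2\ j_1)$ (Case (1) of Construction \ref{403}); for $i=2$ it means marking two points lying in a common cycle of one component and one point of the other (Case (2)), and the delicate point is exactly to verify that such marked points can be chosen and that, by the cycle-count bookkeeping of Lemma \ref{408} and Corollary \ref{405}, adjoining $\delta_1$ changes the number of disjoint cycles by precisely the amount forcing $k=\mu^3(\sigma)$, so the enlarged factorization is again minimal and transitive.
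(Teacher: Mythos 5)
Your proposal is correct and follows essentially the same route as the paper: order the connected components (passing to $\mathcal{OB}^3_1(\alpha')$ and its two-component analogue), let the $z^n/n!$ and $u^{\mu}/\mu!$ factors absorb the multinomial choices of label sets and of shuffle positions so that the weighted count of ordered tuples becomes $(\widetilde{F}_3)^3$ resp.\ $(\widetilde{F}_3)^2$, divide by the number of orderings, and identify $\mathcal{B}^3_3(\alpha')$ directly with the minimal transitive factorizations of permutations whose type has a part $\geq 3$. Only watch the index clash in ``$(\bar{F}_3)_i=\frac{1}{i!}(\widetilde{F}_3)^i$'': the exponent and the factorial are governed by the number of connected components ($3$ for type $1$, $2$ for type $2$), not by the type label $i$ itself, as your own case discussion at the end makes clear.
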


\begin{proof}
We only give the proof for the first equation. Given $(\delta_2,...,\delta_k,\sigma',\varepsilon) \in \mathcal{OB}_1^3(\alpha')$, $\alpha' \vdash n$, the vertex set has three connected components with respect to the group generated by $\{\delta_2,...,\delta_k\}$, called $X_1,X_2,X_3$. The order of these connected components is determined by $\varepsilon$ and it is $X_{\varepsilon(1)},X_{\varepsilon(2)},X_{\varepsilon(3)}$ (see Remark \ref{500008}). Define $\mathcal{S}'=\{\delta_2,...,\delta_k\}$. $\mathcal{S}'_i$ is the subset of $\mathcal{S}'$, which corresponds to the connected component $X_i$, and $\sigma_i$ is the product of all permutations in $\mathcal{S}'_i$ with respect to the order of the tuple $(\delta_2,...,\delta_k)$. Denote by $\alpha_i$ the type of $\sigma_i$, $\alpha_i \vdash n_i$, $1 \leq i \leq 3$.

Now let's take one $(\mu^3(\alpha_i)+1)$-tuple $(\delta_1^i,...,\delta_{\mu^3(\alpha_i)}^i,\hat{\sigma}_i)$ from each set $\mathcal{A}_1^3(\alpha_i)$, $1 \leq i \leq 3$, where $\alpha_1\bigcup\alpha_2\bigcup\alpha_3=\alpha'$. We want to put them together to construct a tuple in $\mathcal{OB}_1^3(\alpha')$. First, we have to fix the vertex sets for each minimal transitive factorization. It means we have to choose $n_i$ integers from $\{1,...,n\}$ as the vertex set for $\mathcal{A}_1^3(\alpha_i)$. The number of choices for the vertex sets is
\begin{align*}
{n \choose n_1}{n-n_1 \choose n_2}{n-n_1-n_2\choose n_3}.
\end{align*}
Next, we have to fix the order of $3$-cycles in the new tuple. In the new tuple, there are $\sum_{i=1}^3\mu^3(\alpha_i)$ many $3$-cycles. So, $\mu^3(\alpha_i)$ of them are for the $3$-cycles in $(\delta_1^i,...,\delta_{\mu^3(\alpha_i)}^i,\hat{\sigma}_i)$. So, the number of choices for the positions is
\begin{align*}
{\sum_{i=1}^3\mu^3(\alpha_i) \choose \mu^3(\alpha_1)}{\sum_{i=1}^3\mu^3(\alpha_i)-\mu^3(\alpha_1) \choose \mu^3(\alpha_2)}{\sum_{i=1}^3\mu^3(\alpha_i)-\mu^3(\alpha_1)-\mu^3(\alpha_2) \choose \mu^3(\alpha_3)}.
\end{align*}
After we choose $\mu^3(\alpha_i)$ positions for the $3$-cycles $\delta_v^i$, $1 \leq v \leq \mu^3(\alpha_i)$, the order of these $3$-cycles in the new tuple is the same as the order in $(\delta_1^i,...,\delta_{\mu^3(\alpha_i)}^i)$. With the discussion and notations above, we have
\begin{align*}
|\mathcal{OB}_1^3(\alpha')|=&\sum_{\alpha_1\bigcup\alpha_2\bigcup\alpha_3=\alpha'} ( {n \choose n_1}{\sum_{i=1}^3\mu^3(\alpha_i) \choose \mu^3(\alpha_1)}|\mathcal{A}_1^3(\alpha_1)|\times
 \\&{n-n_1 \choose n_2}{\sum_{i=1}^3\mu^3(\alpha_i)-\mu^3(\alpha_1) \choose \mu^3(\alpha_2)}|\mathcal{A}_1^3(\alpha_2)|\times
 \\ &{n-n_1-n_2\choose n_3}{\sum_{i=1}^3\mu^3(\alpha_i)-\mu^3(\alpha_1)-\mu^3(\alpha_2) \choose \mu^3(\alpha_3)}|\mathcal{A}_1^3(\alpha_3)|).
\end{align*}
Now we calculate the generating function $(\tilde{F}_3)^3$.
\begin{align*}
(\tilde{F}_3)^3 & = \prod_{i=1}^{3}\sum_{n_i \geq 1}\sum_{\alpha_i \vdash n_i}|\mathcal{A}^3(\alpha_i)|\frac{z^{n_i}}{n_i!}\frac{u^{\mu^3(\alpha_i)}}{\mu^3(\alpha_i)!}p_{\alpha_i}\\
&= \sum_{n_i \geq 1, \atop n_1+n_2+n_3=n}\sum_{\alpha_i \vdash n_i,\atop \alpha_i\bigcup\alpha_i\bigcup\alpha_i=\alpha'}|\mathcal{A}_1^3(\alpha_1)||\mathcal{A}_1^3(\alpha_2)||\mathcal{A}_1^3(\alpha_3)| \\&\frac{z^{n}}{n_1!n_2!n_3!}\frac{u^{\sum_{i=1}^3\mu^3(\alpha_i)}}{\mu^3(\alpha_1)!\mu^3(\alpha_2)!\mu^3(\alpha_3)!}p_{\alpha'}
\\&=\sum_{n \geq 1}\sum_{\alpha' \vdash n}|\mathcal{OB}^3_1(\alpha)|\frac{z^{n}}{n!}\frac{u^{\mu^3(\alpha')-2}}{(\mu^3(\alpha')-2)!}p_{\alpha'},
\end{align*}
where the last equality comes from the following formulas
\begin{align*}
 \frac{1}{n_1!n_2!n_3!}&=\frac{{n \choose n_1}{n-n_1 \choose n_2}{n-n_1-n_2\choose n_3}}{n!},\\
 \frac{1}{\mu^3(\alpha_1)!\mu^3(\alpha_2)!\mu^3(\alpha_3)!}&={\sum_{i=1}^3\mu^3(\alpha_i) \choose \mu^3(\alpha_1)}{\sum_{i=1}^3\mu^3(\alpha_i)-\mu^3(\alpha_1) \choose \mu^3(\alpha_1)}\\
 &{\sum_{i=1}^3\mu^3(\alpha_i)-\mu^3(\alpha_1)-\mu^3(\alpha_2) \choose \mu^3(\alpha_1)}\frac{1}{(\sum_{i=1}^3\mu^3(\alpha_i))!},\\
 \sum_{i=1}^3\mu^3(\alpha_i)&=\mu^3(\alpha')-2 \text{ } (\text{Lemma \ref{50005}}).
\end{align*}
By definition, we have
\begin{align*}
|\mathcal{OB}^3_1(\alpha')|=6|\mathcal{B}^3_1(\alpha')|.
\end{align*}
Thus, we prove the first equation,
\begin{align*}
(\bar{F}_3)_1=\frac{1}{6}(\tilde{F}_3)^3.
\end{align*}
With a similar argument, we can prove the other formulas.
\end{proof}

Let $\omega=(j_3 \text{ }  j_2 \text{ } j_1)$ be a $3$-cycle in $S_n$ and let $\sigma$ be a permutation in $S_n$. Let $1 \leq i \leq 3$ be a fixed integer. Let
\begin{align*}
\mathcal{L}_i=\{l \geq 1 \mid \sigma^{l}(j_i) \text{ is any }j_{k}, 1 \leq k \leq 3\}.
\end{align*}
$\mathcal{L}_i$ is not empty, since $n! \in \mathcal{L}_i$ (as $\sigma^{n!}$ is the identity map on the set $\{1,...,n\}$, and $\sigma^{n!}(j_i)=j_i$).

\begin{definition}\label{4010}
We define the "distance" between $j_i$ and the set $\{j_1,j_2,j_3\}$ with respect to the permutation $\sigma$ as
\begin{align*}
dist(j_i,\sigma,j_1,j_2,j_3) = min (\mathcal{L}_i).
\end{align*}
\end{definition}

\begin{example}
We give some examples about the definition above. Consider Case (3) in Construction \ref{403},
\begin{align*}
\omega=\delta_1=(j_3 \text{ } j_2 \text{ } j_1), \quad \sigma=(j_1...j_2...j_3...)\rho_2...\rho_l,
\end{align*}
where $\rho_1=(j_1...j_2...j_3...)$. $dist(j_3,\sigma,j_1,j_2,j_3)$ is the "distance" between $j_3$ and $j_1$ in the cycle $\rho_1$, because $j_1$ is the first element in $\{j_1.j_2,j_3\}$ after $j_3$ under the action of $\sigma$. Similarly, $dist(j_2,\sigma,j_1,j_2,j_3)$ is the "distance" between $j_2$ and $j_3$. Clearly, $\sum_{1 \leq i \leq 3}dist(j_i,\sigma,j_1,j_2,j_3)$ is the length of the cycle $\rho_1$.

Now, let's consider Case (1) in Construction \ref{403}. Here,
\begin{align*}
\sigma=(j_1...)(j_2...)(j_3...)\rho_4...\rho_l.
\end{align*}
In this case, $dist(j_i,\sigma,j_1,j_2,j_3)$ is the length of the cycle containing $j_i$.
\end{example}

\begin{remark}
Suppose $\sigma',\delta$ are permutations in $S_n$, where $\delta$ is a $3$-cycle $(j_3 \text{ } j_2\text{ } j_1)$. Let $\sigma=\delta \sigma'$. Then, we have
\begin{align*}
dist(j_i,\sigma,j_1,j_2,j_3)=dist(j_i,\sigma',j_1,j_2,j_3), \quad 1 \leq i \leq 3.
\end{align*}
This property comes from the calculation in Construction \ref{403}.
\end{remark}

Given an element $(\delta_2,...,\delta_k,\sigma) \in \widetilde{\mathcal{A}}^3_1(\alpha)$, the vertex set $\{1,...,n\}$ has three unordered connected components with respect to the action of the group generated by $\{\delta_2,...,\delta_k\}$. There are $6$ possible orders on the connected components, each of which can be represented by an element $\varepsilon \in S_3$ (Remark \ref{500008}).

\begin{definition}
We define the set $\widetilde{\mathcal{WOA}}^3_1(\alpha)$ as following
\begin{align*}
\widetilde{\mathcal{WOA}}^3_1(\alpha)=\{(\delta_2,...,\delta_k,\sigma,\varepsilon) \mid (\delta_2,...,\delta_k,\sigma) \in \tilde{\mathcal{A}}^3_1(\alpha), \varepsilon \in S_3\}.
\end{align*}
So elements of $\widetilde{\mathcal{WOA}}^3_1(\alpha)$ are elements in $\tilde{\mathcal{A}}^3_1(\alpha)$ together with an order of connected components.
\end{definition}

Now we will show how to use the elements in $\mathcal{OB}^3_1(\alpha')$ to construct a subset $\mathcal{OA}_1^3(\alpha',i_1,i_2,i_3)$ of $\widetilde{\mathcal{WOA}}^3_1(\alpha)$.

\begin{construction}\label{50008}
Let $(\delta_2,...,\delta_k,\sigma',\varepsilon)$ be an element in $\mathcal{OB}^3_1(\alpha')$. Recall the notations in Definition \ref{50002}, we define $X_1,X_2,X_3$ to be the three connected components with respect to the group generated by $\{\delta_2,...,\delta_k\}$, where the order here is the canonical order defined in Remark \ref{500008} rather than the order $\varepsilon$. Let $\mathcal{S}=\{\delta_2,...,\delta_k\}$. $\mathcal{S}_i$ is the subset of $\mathcal{S}$, which corresponds to the connected component $X_i$, and $\sigma_i$ is the product of all permutations in $\mathcal{S}_i$ with respect to the order of the tuple $(\delta_2,...,\delta_k)$. We have $\sigma'=\sigma_1\sigma_2\sigma_3$. We fix three positive integers $i_1,i_2,i_3$.

Recall in Lemma \ref{502}, we use a $3$-cycle $\delta_1$ to connect three connected components or, more precisely, to connect three disjoint cycles from each connected component. We assume there is at least one disjoint cycle with length $i_v$ in $\sigma_v$, $1 \leq v \leq 3$. We take one disjoint cycle $\rho'_v$ with length $i_v$ from $\sigma_v$ and pick one integer from each of the three cycles. Assume we take $j_v$ from the cycle $\rho'_v$, $1 \leq v \leq 3$. There are two choices of $3$-cycles after we pick the integers $j_1,j_2,j_3$. They are $(j_3 \text{ } j_2 \text{ } j_1)$ and $(j_3 \text{ } j_1 \text{ } j_2)$. We use the order $\varepsilon$ of connected components to determine which is the $3$-cycle we want. The $3$-cycle we choose is $(j_{\varepsilon(3)} \text{ } j_{\varepsilon(2)} \text{ } j_{\varepsilon(1)})$. We can use this $3$-cycle to construct a new permutation $(j_{\varepsilon(3)} \text{ } j_{\varepsilon(2)} \text{ } j_{\varepsilon(1)}) \sigma'$. Clearly, $(\delta_2,...,\delta_k,(j_{\varepsilon(3)} \text{ } j_{\varepsilon(2)} \text{ } j_{\varepsilon(1)})\sigma',\varepsilon)$ is an element in $\widetilde{\mathcal{WOA}}^3_1(\alpha)$.

Denote by $\mathcal{OA}_1^3(\alpha',i_1,i_2,i_3)$ the set of $(k+1)$-tuples $(\delta_2,...,\delta_k,\sigma,\varepsilon)$ such that $(\delta_2,...,\delta_k,\sigma,\varepsilon)$ can be constructed from some element $(\delta_2,...,\delta_k,\sigma',\varepsilon) \in \mathcal{OB}_1^3(\alpha')$ by connecting three disjoint cycles (or multiplying by a $3$-cycle $\delta_1$) with length $i_1,i_2,i_3$ from $\sigma_1,\sigma_2,\sigma_3$ in the above method. $\mathcal{OA}_1^3(\alpha',i_1,i_2,i_3)$ is a subset of $\widetilde{\mathcal{WOA}}^3_1(\alpha)$.
\end{construction}

\begin{remark}
If $\sigma_v$ does not have a disjoint cycle with length $i_v$, where $1 \leq v \leq 3$, then $\mathcal{OA}_1^3(\alpha',i_1,i_2,i_3)=\emptyset$.
\end{remark}

\begin{lemma}\label{5000010}
Assume $\mathcal{OA}_1^3(\alpha',i_1,i_2,i_3)$ is nonempty. Given any two elements $(\delta_2,...,\delta_k,\sigma,\varepsilon), (\widetilde{\delta}_2,...,\widetilde{\delta}_k,\widetilde{\sigma},\varepsilon)$ in the set $\mathcal{OA}_1^3(\alpha',i_1,i_2,i_3)$, $\sigma$ and $\widetilde{\sigma}$ are of the same type, i.e. $\Phi(\sigma)=\Phi(\widetilde{\sigma})$.
\end{lemma}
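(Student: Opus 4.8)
The plan is to trace through Construction \ref{50008} and observe that the cycle type of the resulting $\sigma$ is forced by $\alpha'$ together with $i_1,i_2,i_3$ alone, independently of every other choice made. Both $(\delta_2,\dots,\delta_k,\sigma,\varepsilon)$ and $(\widetilde{\delta}_2,\dots,\widetilde{\delta}_k,\widetilde{\sigma},\varepsilon)$ are produced from some element of $\mathcal{OB}^3_1(\alpha')$ by left-multiplication with a suitable $3$-cycle, so I may write $\sigma=\omega\sigma'$ and $\widetilde{\sigma}=\widetilde{\omega}\,\widetilde{\sigma}'$, where $\sigma'$ and $\widetilde{\sigma}'$ are both of type $\alpha'$ and $\omega,\widetilde{\omega}$ are the $3$-cycles supplied by the construction. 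The goal is to show that both products have the same cycle type.

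First I would pin down the local picture for $\sigma=\omega\sigma'$. By definition of $\mathcal{OB}^3_1(\alpha')$, the set $\{1,\dots,n\}$ splits into exactly three connected components $X_1,X_2,X_3$ under the group generated by $\{\delta_2,\dots,\delta_k\}$, and $\sigma'=\sigma_1\sigma_2\sigma_3$ with $\sigma_v$ supported on $X_v$. The construction selects one integer $j_v$ lying in a disjoint cycle $\rho'_v$ of length $i_v$ inside $\sigma_v$. Since the three components are pairwise disjoint, the elements $j_1,j_2,j_3$ lie in three distinct disjoint cycles of $\sigma'$, namely $\rho'_1,\rho'_2,\rho'_3$, of lengths $i_1,i_2,i_3$ respectively.

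Next I would invoke Construction \ref{403}. The $3$-cycle $\omega=(j_{\varepsilon(3)}\ j_{\varepsilon(2)}\ j_{\varepsilon(1)})$ acts on three numbers lying in three distinct cycles of $\sigma'$, which is precisely Case (1) of Construction \ref{403}, regardless of the permutation $\varepsilon$ (relabelling $j_1,j_2,j_3$ does not change the fact that they sit in three different cycles). Consequently $\omega$ merges $\rho'_1,\rho'_2,\rho'_3$ into a single cycle and fixes every other cycle of $\sigma'$; since the support of the merged cycle is the union of the supports of $\rho'_1,\rho'_2,\rho'_3$, its length equals $i_1+i_2+i_3$, in agreement with Corollary \ref{405}(1) which records that the number of cycles drops by two. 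Therefore the cycle type of $\sigma$ is obtained from $\alpha'$ by deleting one part equal to $i_1$, one equal to $i_2$, one equal to $i_3$, and inserting a single part $i_1+i_2+i_3$. This recipe depends only on $\alpha'$ and $i_1,i_2,i_3$, so applying the identical argument to $\widetilde{\sigma}=\widetilde{\omega}\,\widetilde{\sigma}'$ yields the same cycle type, whence $\Phi(\sigma)=\Phi(\widetilde{\sigma})$.

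The point requiring care is the claim that the multiplication always falls into Case (1) and that the merged length is exactly $i_1+i_2+i_3$ irrespective of $\varepsilon$ and of the specific integers $j_v$ chosen. The ordering $\varepsilon$ only dictates which of the two possible $3$-cycles on $\{j_1,j_2,j_3\}$ is used, and hence merely the internal cyclic arrangement of the merged cycle; neither the Case-(1) classification nor the total length $i_1+i_2+i_3$ is affected. I would also flag the degenerate subtlety that when some of the $i_v$ coincide the deletion of equal parts from $\alpha'$ must be read with multiplicity, but this bookkeeping does not alter the conclusion that the resulting type is a function of $(\alpha',i_1,i_2,i_3)$ alone.
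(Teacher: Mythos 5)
Your proposal is correct and follows essentially the same route as the paper: both arguments observe that the construction replaces three disjoint cycles of lengths $i_1,i_2,i_3$ (one from each component) by a single cycle of length $i_1+i_2+i_3$ while leaving the remaining cycles of $\sigma'$ untouched, so the resulting cycle type is determined by $\alpha'$ and $i_1,i_2,i_3$ alone. Your explicit appeal to Case (1) of Construction \ref{403} and your remark that $\varepsilon$ only affects the internal arrangement of the merged cycle make the same point the paper leaves implicit.
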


\begin{proof}
Given any two elements $(\delta_2,...,\delta_k,\sigma',\varepsilon), (\widetilde{\delta}_2,...,\widetilde{\delta}_k,\widetilde{\sigma}',\varepsilon) \in \mathcal{OB}_1^3(\alpha')$, we have $\Phi(\alpha')=\Phi(\sigma')=\Phi(\widetilde{\sigma}')$ by definition. Assume $(\delta_2,...,\delta_k,\sigma,\varepsilon)$ is constructed from the element $(\delta_2,...,\delta_k,\sigma',\varepsilon)$ by multiplying a $3$-cycle $\delta_1$ and $(\widetilde{\delta}_2,...,\widetilde{\delta}_k,\widetilde{\sigma},\varepsilon)$ is constructed from $(\widetilde{\delta}_2,...,\widetilde{\delta}_k,\widetilde{\sigma}',\varepsilon)$ by multiplying a $3$-cycle $\widetilde{\delta}_1$. Also, we write
\begin{align*}
\sigma'=\rho_1...\rho_l, \quad \widetilde{\sigma}'=\widetilde{\rho}_1...\widetilde{\rho}_l,
\end{align*}
where $\rho_1...\rho_l$ is the product of disjoint cycles of $\sigma'$ and the same for $\widetilde{\rho}_1...\widetilde{\rho}_l$ and $\widetilde{\sigma}'$. We can assume the length of $\rho_v$ equal to the length of $\widetilde{\rho}_v$. $\delta_1$ connects the first three disjoint cycles $\rho_1,\rho_2,\rho_3$ in $\sigma'$ and $\widetilde{\delta}_1$ connects the first three disjoint cycles in $\widetilde{\sigma}'$. Then, we have
\begin{align*}
\sigma=\rho'_1\rho_4...\rho_l, \quad \widetilde{\sigma}=\widetilde{\rho}'_1\widetilde{\rho}_4...\widetilde{\rho}_l,
\end{align*}
where $\rho'_1=\delta_1\rho_1\rho_2\rho_3$ and $\widetilde{\rho}'_1=\widetilde{\delta}_1\widetilde{\rho}_1\widetilde{\rho}_2\widetilde{\rho}_3$, which are disjoint cycles of the same length $i_1+i_2+i_3$. Hence, $\sigma$ and $\tilde{\sigma}$ are of the same type.
\end{proof}

\begin{remark}
Now we want to translate the proof of Lemma \ref{5000010} in differential operators. We assume that there is only one disjoint cycle with length $i_v$ in $\sigma'$, $1 \leq v \leq 3$. Multiplying a $3$-cycle $\delta_1$ to $\sigma'$ means that we substitute $\rho_1\rho_2\rho_3$ by another cycle $\rho'_1$. This procedure can be considered in two steps: we first delete the first three cycles, then add another cycle $\rho'_1$. We consider this procedure in monomial $\Phi(\sigma')$. Deleting the first three cycles means
\begin{align*}
\frac{\partial^3 \Phi(\sigma')}{\partial p_{i_1}\partial p_{i_2}\partial p_{i_3}},
\end{align*}
and adding another new cycle means
\begin{align*}
 p_{i_1+i_2+i_3}\frac{\partial^3 \Phi(\sigma')}{\partial p_{i_1}\partial p_{i_2}\partial p_{i_3}}.
\end{align*}
Hence, we have
\begin{align*}
 p_{i_1+i_2+i_3}\frac{\partial^3 \Phi(\sigma')}{\partial p_{i_1}\partial p_{i_2}\partial p_{i_3}}  = \Phi(\sigma).
\end{align*}
Similarly, we have
\begin{align*}
 p_{i_1+i_2+i_3}\frac{\partial^3 \Phi(\widetilde{\sigma}')}{\partial p_{i_1}\partial p_{i_2}\partial p_{i_3}}  = \Phi(\widetilde{\sigma}).
\end{align*}
Since $\Phi(\sigma')=\Phi(\widetilde{\sigma}')$, so $\Phi(\sigma)=\Phi(\widetilde{\sigma})$.
\end{remark}

\begin{remark}\label{5000011}
From Lemma \ref{5000010}, we know that for any element $(\delta_2,...,\delta_k,\sigma,\varepsilon)$ in $\mathcal{OA}_1^3(\alpha',i_1,i_2,i_3)$, $\sigma$ is of some fixed type $\alpha$. Sometimes, we will use the following notation to emphasize the type $\alpha$ for the set $\mathcal{OA}_1^3(\alpha',i_1,i_2,i_3)$,
\begin{align*}
\mathcal{OA}_1^3(\alpha',\alpha,i_1,i_2,i_3):=\mathcal{OA}_1^3(\alpha',i_1,i_2,i_3).
\end{align*}
For any other partition $\widetilde{\alpha} \neq \alpha$, we define
\begin{align*}
\mathcal{OA}_1^3(\alpha',\widetilde{\alpha},i_1,i_2,i_3):=\emptyset.
\end{align*}
\end{remark}

\begin{definition}\label{50009}
We define the union of all sets $\mathcal{OA}_1^3(\alpha',\alpha,i_1,i_2,i_3)$ as
\begin{align*}
\mathcal{OA}_1^3(\alpha)=\bigcup_{i_1,i_2,i_3 \geq 1}\bigcup_{\alpha'} \mathcal{OA}_1^3(\alpha',\alpha,i_1,i_2,i_3),
\end{align*}
which is a disjoint union.
\end{definition}

\begin{lemma}\label{5010}
Given any partition $\alpha\vdash n$, we have
\begin{align*}
|\widetilde{\mathcal{WOA}}^3_1(\alpha)|=2|\mathcal{OA}^3_1(\alpha)|.
\end{align*}
\end{lemma}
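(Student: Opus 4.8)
The plan is to realise both sets as collections of tuples $(\delta_2,\dots,\delta_k,\sigma,\varepsilon)$ of exactly the same shape, to isolate the precise compatibility condition that cuts $\mathcal{OA}^3_1(\alpha)$ out of $\widetilde{\mathcal{WOA}}^3_1(\alpha)$, and then to count fibres over $\tilde{\mathcal{A}}^3_1(\alpha)$. The whole equality will come down to a clean $3$-to-$1$ coset count on $S_3$.

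First I would record the trivial count $|\widetilde{\mathcal{WOA}}^3_1(\alpha)| = 6\,|\tilde{\mathcal{A}}^3_1(\alpha)|$, immediate from the definition since each element of $\tilde{\mathcal{A}}^3_1(\alpha)$ is paired with each of the six orders $\varepsilon \in S_3$. Next, for $(\delta_2,\dots,\delta_k,\sigma) \in \tilde{\mathcal{A}}^3_1(\alpha)$ put $\sigma' = \delta_2\cdots\delta_k$ and $\delta_1 = \sigma\sigma'^{-1}$. Because we are in type $1$, Case (1) of Lemma \ref{502} guarantees that $\{1,\dots,n\}$ has exactly three connected components $X_1,X_2,X_3$ (canonical order) under the group generated by $\{\delta_2,\dots,\delta_k\}$, and that $\delta_1$ is a $3$-cycle moving exactly one point in each $X_v$; call it $j_v \in X_v$. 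I would then prove the characterization that $(\delta_2,\dots,\delta_k,\sigma,\varepsilon) \in \mathcal{OA}^3_1(\alpha)$ if and only if
\[
\delta_1 = (j_{\varepsilon(3)}\; j_{\varepsilon(2)}\; j_{\varepsilon(1)}).
\]
The forward direction is a direct reading of Construction \ref{50008}: the $3$-cycle it builds is exactly this one and $\sigma = \delta_1\sigma'$. For the converse, given the displayed equality, each $j_v$ lies in some cycle of $\sigma_v$, of length $i_v$ say, so $(\delta_2,\dots,\delta_k,\sigma') \in \mathcal{B}^3_1(\alpha')$ and the tuple is produced by the construction with these $i_1,i_2,i_3$. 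One also checks that $(i_1,i_2,i_3,\alpha')$ is recovered from $\delta_1$ and $\sigma'$, so the disjoint union in Definition \ref{50009} causes no overcounting.

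The combinatorial heart is then to count, for a fixed $(\delta_2,\dots,\delta_k,\sigma)$, how many $\varepsilon \in S_3$ satisfy the displayed equality. Here $\delta_1$ is one fixed $3$-cycle on $\{j_1,j_2,j_3\}$, while the assignment $\varepsilon \mapsto (j_{\varepsilon(3)}\; j_{\varepsilon(2)}\; j_{\varepsilon(1)})$ sends $S_3$ onto the two $3$-cycles supported on $\{j_1,j_2,j_3\}$. Precomposing $\varepsilon$ with the cyclic shift $(1\,2\,3)$ merely rotates the word $(j_{\varepsilon(3)},j_{\varepsilon(2)},j_{\varepsilon(1)})$ and hence does not change the resulting $3$-cycle, so the fibres of this assignment are exactly the three left cosets of $\langle(1\,2\,3)\rangle$ and the assignment is $3$-to-$1$. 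Consequently exactly three of the six $\varepsilon$ reproduce the given $\delta_1$, which yields $|\mathcal{OA}^3_1(\alpha)| = 3\,|\tilde{\mathcal{A}}^3_1(\alpha)|$. Combining with the first count gives $|\widetilde{\mathcal{WOA}}^3_1(\alpha)| = 6\,|\tilde{\mathcal{A}}^3_1(\alpha)| = 2\,|\mathcal{OA}^3_1(\alpha)|$.

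The step I expect to be the main obstacle is the converse half of the characterization together with the bookkeeping of the disjoint union: one must verify that every compatible tuple genuinely arises from Construction \ref{50008} for a \emph{unique} choice of $(i_1,i_2,i_3,\alpha')$, i.e. that recovering $j_1,j_2,j_3$ as the moved points of $\delta_1 = \sigma\sigma'^{-1}$ is well defined and matches the ``pick one integer from each cycle'' step. Once that is pinned down, everything reduces to the $3$-to-$1$ coset count above, which supplies the factor $2 = 6/3$.
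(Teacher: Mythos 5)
Your proof is correct and follows essentially the same route as the paper's: both arguments reduce to the observation that the map $\varepsilon \mapsto (j_{\varepsilon(3)}\; j_{\varepsilon(2)}\; j_{\varepsilon(1)})$ sends $S_3$ three-to-one onto the two $3$-cycles supported on $\{j_1,j_2,j_3\}$, so for a fixed element of $\tilde{\mathcal{A}}^3_1(\alpha)$ exactly three of the six orders reproduce $\delta_1=\sigma\sigma'^{-1}$, giving $6|\tilde{\mathcal{A}}^3_1(\alpha)|=|\widetilde{\mathcal{WOA}}^3_1(\alpha)|=2|\mathcal{OA}^3_1(\alpha)|$. Your explicit characterization of membership in $\mathcal{OA}^3_1(\alpha)$ and the check that $(i_1,i_2,i_3,\alpha')$ is recovered from $\delta_1$ and $\sigma'$ make the bookkeeping slightly more careful than the paper's, but the substance is identical.
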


\begin{proof}
Recall that $\mathcal{OA}_1^3(\alpha',\alpha,i_1,i_2,i_3)$ is a subset of $\widetilde{\mathcal{WOA}}^3_1(\alpha)$ and $\mathcal{OA}^3_1(\alpha)$ is the union of the sets $\mathcal{OA}_1^3(\alpha',\alpha,i_1,i_2,i_3)$ over all partitions $\alpha'$ and all positive integers $i_1,i_2,i_3$.

Fix an element $(\delta_2,...,\delta_k,\sigma') \in \mathcal{B}^3_1(\alpha')$. There are $6$ possible orders on the connected components of $\{1,...,n\}$ with respect to the action of the group generated by $\{\delta_2,...,\delta_k\}$. Each corresponds to an element in $\mathcal{OB}^3_1(\alpha')$. In Construction \ref{50008}, after we pick three integers $j_1,j_2,j_3$, the order $\varepsilon$ determines the unique $3$-cycle. We assume $j_v$ comes from $X_v$, $1\leq v \leq 3$. The $3$-cycle we construct from $(\delta_2,...,\delta_k,\sigma',\varepsilon)$ by taking the integers $j_1,j_2,j_3$ from each connected component is $(j_{\varepsilon(3)} \text{ } j_{\varepsilon(2)} \text{ } j_{\varepsilon(1)})$, $\varepsilon \in S_3$. Now let's consider the six $3$-cycles $(j_{\varepsilon(3)} \text{ } j_{\varepsilon(2)} \text{ } j_{\varepsilon(1)})$, where $\varepsilon \in S_3$. We find three of them are the same, i.e.
\begin{align*}
(j_3 \text{ } j_2 \text{ } j_1)=(j_1 \text{ } j_3 \text{ } j_2)=(j_2 \text{ } j_1 \text{ } j_3),\\
(j_3 \text{ } j_1 \text{ } j_2)=(j_2 \text{ } j_3 \text{ } j_1)=(j_1 \text{ } j_2 \text{ } j_3).
\end{align*}
Say the $3$-cycles in the first row correspond to permutations $\varepsilon_1,\varepsilon_2,\varepsilon_3$ respectively and the other $3$-cycles in the second row correspond to permutations $\varepsilon_4,\varepsilon_5,\varepsilon_6$ respectively. We find
\begin{align*}
\sigma=\varepsilon_i \sigma'\neq \varepsilon_j \sigma' = \tilde{\sigma},
\end{align*}
where $1 \leq i \leq 3$ and $4 \leq j \leq 6$.

By the discussion above, we can only construct the elements $(\delta_2,...,\delta_k,\sigma,\varepsilon_i)$, where $i=1,2,3$, and we cannot get the elements $(\delta_2,...,\delta_k,\sigma,\varepsilon_j) \in \widetilde{\mathcal{WOA}}^3_1(\alpha)$. It means $(\delta_2,...,\delta_k,\sigma,\varepsilon_j) \notin \mathcal{OA}^3_1(\alpha)$. Also, $(\delta_2,...,\delta_k,\tilde{\sigma},\varepsilon_i) \notin \mathcal{OA}^3_1(\alpha)$, where $i=1,2,3$, and $(\delta_2,...,\delta_k,\tilde{\sigma},\varepsilon_j) \in \mathcal{OA}^3_1(\alpha)$ for $j=4,5,6$. Hence, we only construct half of the elements in $|\widetilde{\mathcal{WOA}}^3_1(\alpha)|$. Hence, we have
\begin{align*}
6|\tilde{\mathcal{A}}^3_1(\alpha)|=|\widetilde{\mathcal{WOA}}^3_1(\alpha)|=2|\mathcal{OA}^3_1(\alpha',\alpha)|.
\end{align*}
\end{proof}

The notations in the following lemma are the same as the notations in Construction \ref{50008}.
\begin{lemma}\label{5000012}
Let $(\delta_2,...,\delta_k,\sigma',\varepsilon)$ be an element in $\mathcal{OB}^3_1(\alpha')$ and let $i_1,i_2,i_3$ be positive integers. $c_v$ is the number of disjoint cycles with length $i_v$ in $\sigma_v$. The number of elements in $\mathcal{OA}_1^3(\alpha',i_1,i_2,i_3)$ which are constructed from $(\delta_2,...,\delta_k,\sigma',\varepsilon)$ is $\prod^3_{v=1}c_v i_v$.
\end{lemma}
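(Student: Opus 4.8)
The plan is to realize every element of $\mathcal{OA}_1^3(\alpha',i_1,i_2,i_3)$ built from the fixed tuple $(\delta_2,\dots,\delta_k,\sigma',\varepsilon)$ as the image of an explicit set of choices, and then to count that set by showing the construction map is injective. First I would observe that throughout Construction \ref{50008} the data $\delta_2,\dots,\delta_k$ and the order $\varepsilon$ are inherited unchanged from the starting tuple; only the final permutation $\sigma=\delta_1\sigma'$ varies, where $\delta_1$ is the $3$-cycle selected by the construction. Hence two outputs coincide if and only if they have the same $\sigma$, and since $\sigma'$ is fixed, $\sigma$ determines $\delta_1=\sigma(\sigma')^{-1}$ uniquely. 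So it suffices to count the distinct $3$-cycles $\delta_1$ that the construction can produce from this tuple.

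Next I would identify the choices that feed the construction. For each $v$ one must select a disjoint cycle $\rho'_v$ of length $i_v$ inside $\sigma_v$ and then an integer $j_v$ lying on $\rho'_v$. Because every integer of $X_v$ belongs to exactly one disjoint cycle of $\sigma_v$, choosing $\rho'_v$ and then $j_v\in\rho'_v$ is the same as choosing a single integer $j_v\in X_v$ that happens to lie on a length-$i_v$ cycle of $\sigma_v$. There are $c_v$ cycles of length $i_v$ in $\sigma_v$ and each carries $i_v$ integers, so the number of admissible $j_v$ is $c_v i_v$, and the total number of choice-triples $(j_1,j_2,j_3)$ is $\prod_{v=1}^3 c_v i_v$. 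It remains only to show that distinct triples give distinct outputs.

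The crux, and the step I expect to require the most care, is this injectivity, since it is the only place where the geometry of the connected components enters. Given the fixed order $\varepsilon$, the construction sets $\delta_1=(j_{\varepsilon(3)}\ j_{\varepsilon(2)}\ j_{\varepsilon(1)})$, whose support is $\{j_1,j_2,j_3\}$ with $j_v\in X_v$. Since the connected components $X_1,X_2,X_3$ are pairwise disjoint (Definition \ref{50002}), the support of $\delta_1$ together with the labeling of its three points by the components in which they lie recovers each $j_v$ individually; thus the map $(j_1,j_2,j_3)\mapsto\delta_1$ is injective, and with $\varepsilon$ fixed there is no ambiguity in the cyclic orientation. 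It is precisely the fact that the chosen integers live in three different components that both prevents two triples from collapsing to the same $3$-cycle and makes the orientation unambiguous. Combining the three steps, the construction produces exactly $\prod_{v=1}^3 c_v i_v$ distinct elements of $\mathcal{OA}_1^3(\alpha',i_1,i_2,i_3)$, as claimed.
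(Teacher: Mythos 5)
Your proposal is correct and follows essentially the same route as the paper: both count $\prod_{v=1}^3 c_v$ choices of a length-$i_v$ cycle in each $\sigma_v$ and $\prod_{v=1}^3 i_v$ choices of a marked point on each, yielding $\prod_{v=1}^3 c_v i_v$ candidate $3$-cycles $\delta_1$. You additionally verify injectivity (distinct triples $(j_1,j_2,j_3)$ give distinct $\delta_1$ because the components $X_v$ are disjoint, and $\sigma=\delta_1\sigma'$ determines $\delta_1$), a step the paper's proof takes for granted; this is a welcome tightening rather than a different argument.
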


\begin{proof}
If $c_v=0$ for some $1 \leq v \leq 3$, then $\mathcal{OA}_1^3(\alpha',i_1,i_2,i_3)$  is empty. So, the number of elements in $\mathcal{OA}_1^3(\alpha',i_1,i_2,i_3)$ is zero. Also, $\prod^3_{v=1}c_v i_v=0$. Hence, the statement is true in this special case.

Now we assume that there is at least one disjoint cycle with length $i_v$ in $\sigma_v$, $1 \leq v \leq 3$. In Construction \ref{50008}, we first pick disjoint cycle $\rho'_v$ with length $i_v$ in $\sigma_v$, $1 \leq v \leq 3$. The number of the choices of $\rho'_1,\rho'_2,\rho'_3$ is $\prod^3_{v=1}c_v$. After we pick three disjoint cycles $\rho'_1,\rho'_2,\rho'_3$, we can construct $i_1 i_2 i_3$ many $\delta_1$ such that $(\delta_2,...,\delta_k,\delta_1\sigma',\varepsilon) \in \mathcal{OA}_1^3(\alpha',i_1,i_2,i_3)$. Hence, the number of elements constructed by $(\delta_2,...,\delta_k,\sigma',\varepsilon)$ is $\prod^3_{v=1}c_v i_v$.
\end{proof}

\begin{lemma}\label{50010}
Let $i_1,i_2,i_3$ be three positive integers. We have
\begin{align*}
& \sum_{(\delta_2,...,\delta_k,\sigma',\varepsilon) \in \mathcal{OB}^3_1(\alpha')}i_1 i_2 i_3 p_{i_1+i_2+i_3} \frac{\partial \Phi(\sigma_1)}{\partial p_{i_1}}\frac{\partial \Phi(\sigma_2)}{\partial p_{i_2}}\frac{\partial \Phi(\sigma_3)}{\partial p_{i_3}}\\
& =\left( \sum_{(\delta_2,...,\delta_k,\sigma,\varepsilon) \in \mathcal{OA}^3_1(\alpha',i_1,i_2,i_3)}\Phi(\sigma) \right).
\end{align*}
\end{lemma}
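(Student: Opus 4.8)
The plan is to prove the identity by comparing the two sides term by term over the indexing set $\mathcal{OB}^3_1(\alpha')$: I will show that a single left-hand summand equals $i_1 i_2 i_3\, c_1 c_2 c_3\, \Phi(\sigma)$, where $c_v$ is the number of disjoint cycles of length $i_v$ in $\sigma_v$ and $\sigma$ is the permutation produced from $(\delta_2,\dots,\delta_k,\sigma',\varepsilon)$ by Construction \ref{50008}, and then reorganize the right-hand sum according to which element of $\mathcal{OB}^3_1(\alpha')$ each element of $\mathcal{OA}^3_1(\alpha',i_1,i_2,i_3)$ is constructed from. Both sides will be identified with $\sum_{(\delta_2,\dots,\delta_k,\sigma',\varepsilon) \in \mathcal{OB}^3_1(\alpha')} i_1 i_2 i_3\, c_1 c_2 c_3\, \Phi(\sigma)$.

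First I would analyze one left-hand summand. Fix $(\delta_2,\dots,\delta_k,\sigma',\varepsilon) \in \mathcal{OB}^3_1(\alpha')$ with $\sigma' = \sigma_1 \sigma_2 \sigma_3$ as in Construction \ref{50008}; since $X_1,X_2,X_3$ are disjoint we have $\Phi(\sigma') = \Phi(\sigma_1)\Phi(\sigma_2)\Phi(\sigma_3)$. Writing $\Phi(\sigma_v) = p_{i_v}^{c_v} R_v$ with $R_v$ free of $p_{i_v}$, one has $\frac{\partial \Phi(\sigma_v)}{\partial p_{i_v}} = c_v p_{i_v}^{c_v - 1} R_v$, so that
\begin{align*}
i_1 i_2 i_3\, p_{i_1+i_2+i_3} \prod_{v=1}^{3}\frac{\partial \Phi(\sigma_v)}{\partial p_{i_v}} = i_1 i_2 i_3\, c_1 c_2 c_3\, p_{i_1+i_2+i_3}\prod_{v=1}^3 p_{i_v}^{c_v-1} R_v.
\end{align*}
The monomial $p_{i_1+i_2+i_3}\prod_v p_{i_v}^{c_v-1} R_v$ is exactly $\Phi(\sigma)$: it records removing one part of length $i_v$ from each $\sigma_v$ and inserting one part of length $i_1+i_2+i_3$, which is precisely the effect on cycle type of multiplying $\sigma'$ by the $3$-cycle $\delta_1$ in Construction \ref{50008} (compare the differential-operator reformulation following Lemma \ref{5000010}). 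Hence each left-hand summand equals $i_1 i_2 i_3\, c_1 c_2 c_3\, \Phi(\sigma)$.

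Next I would reorganize the right-hand sum. By definition every $(\delta_2,\dots,\delta_k,\sigma,\varepsilon) \in \mathcal{OA}^3_1(\alpha',i_1,i_2,i_3)$ is constructed in Construction \ref{50008} from the element $(\delta_2,\dots,\delta_k,\sigma',\varepsilon) \in \mathcal{OB}^3_1(\alpha')$ with $\sigma' = \delta_2\cdots\delta_k$; since $\sigma'$ and $\varepsilon$ are determined by the common data $(\delta_2,\dots,\delta_k,\varepsilon)$, this source is unique, so the construction map partitions $\mathcal{OA}^3_1(\alpha',i_1,i_2,i_3)$ into fibers indexed by $\mathcal{OB}^3_1(\alpha')$. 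By Lemma \ref{5000012} the fiber over a fixed source has exactly $\prod_{v=1}^3 c_v i_v = i_1 i_2 i_3\, c_1 c_2 c_3$ elements, and by Lemma \ref{5000010} all elements of that fiber share the same $\sigma$-type, hence the same value $\Phi(\sigma)$, equal to the one computed above. Summing over fibers gives
\begin{align*}
\sum_{(\delta_2,\dots,\delta_k,\sigma,\varepsilon) \in \mathcal{OA}^3_1(\alpha',i_1,i_2,i_3)}\Phi(\sigma) = \sum_{(\delta_2,\dots,\delta_k,\sigma',\varepsilon) \in \mathcal{OB}^3_1(\alpha')} i_1 i_2 i_3\, c_1 c_2 c_3\, \Phi(\sigma),
\end{align*}
which matches the left-hand side computed in the previous step.

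Finally I would reconcile the degenerate case so that the bookkeeping is uniform: when $c_v = 0$ for some $v$ the factor $\frac{\partial \Phi(\sigma_v)}{\partial p_{i_v}}$ vanishes, so the corresponding left-hand summand is zero, while on the right the fiber over that source is empty by Lemma \ref{5000012}; the two contributions agree. The main obstacle is the careful bookkeeping in the first step, namely translating the product of partial derivatives into the combinatorial count $c_1 c_2 c_3$ and identifying the resulting monomial with $\Phi(\sigma)$, together with verifying that the construction map is a genuine partition of $\mathcal{OA}^3_1(\alpha',i_1,i_2,i_3)$ rather than an overcounting; once these are in place the summation is a routine regrouping.
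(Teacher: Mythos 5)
Your proposal is correct and follows essentially the same route as the paper: both arguments decompose $\mathcal{OA}^3_1(\alpha',i_1,i_2,i_3)$ into the disjoint fibers of the construction map over $\mathcal{OB}^3_1(\alpha')$, invoke Lemma \ref{5000012} for the fiber size $\prod_v c_v i_v$, and match this against the coefficient $\prod_v c_v$ produced by differentiating $p_{i_v}^{c_v}$, with the same treatment of the degenerate case $c_v=0$. Your explicit remark that the source $(\delta_2,\dots,\delta_k,\sigma',\varepsilon)$ is recoverable from the constructed tuple is a slightly more careful justification of the disjointness that the paper merely asserts.
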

\begin{proof}
Let $(\delta_2,...,\delta_k,\sigma',\varepsilon)$ be an element in $\mathcal{OB}^3_1(\alpha')$. We define a new set \begin{align*}
\mathcal{OA}^3_1(\alpha',\delta_2,...,\delta_k,\sigma',\varepsilon),
\end{align*}
which contains all elements constructed from $(\delta_2,...,\delta_k,\sigma',\varepsilon)$ as in Construction \ref{50008}. It is a subset of $\mathcal{OA}^3_1(\alpha',i_1,i_2,i_3)$. Clearly, we have
\begin{align*}
\bigcup_{(\delta_2,...,\delta_k,\sigma',\varepsilon) \in \mathcal{OB}^3_1(\alpha')}\mathcal{OA}^3_1(\alpha',\delta_2,...,\delta_k,\sigma',\varepsilon) = \mathcal{OA}^3_1(\alpha',i_1,i_2,i_3),
\end{align*}
which is a disjoint union.

Now we begin to prove this lemma. First, if we cannot find a disjoint cycle with length $i_v$ in $\sigma_v$ for some $v$, $1 \leq v \leq 3$, it means that $\mathcal{OA}^3_1(\alpha',\delta_2,...,\delta_k,\sigma',\varepsilon)$ is empty. So, we have
\begin{align*}
\left( \sum_{(\delta_2,...,\delta_k,\sigma,\varepsilon) \in \mathcal{OA}^3_1(\alpha',\delta_2,...,\delta_k,\sigma',\varepsilon)}\Phi(\sigma) \right)=0
\end{align*}
Also, we find $\frac{\partial \Phi(\sigma_v)}{\partial p_{i_v}}=0$. So, the lemma is true in this special case.

Now we assume there is at least one disjoint cycle with length $i_v$ in $\sigma_v$ for all $1 \leq v \leq 3$ and $c_v$ is the number of disjoint cycles with length $i_v$ in $\sigma_v$. By Lemma \ref{5000012}, we know the number of elements in $\mathcal{OA}_1^3(\alpha',i_1,i_2,i_3)$ which is constructed from $(\delta_2,...,\delta_k,\sigma',\varepsilon)$ is $\prod^3_{v=1}c_v i_v$. So, we have
\begin{align*}
\left( \sum_{(\delta_2,...,\delta_k,\sigma,\varepsilon) \in \mathcal{OA}^3_1(\alpha',\delta_2,...,\delta_k,\sigma',\varepsilon)}\Phi(\sigma) \right)=(\prod^3_{v=1}c_v i_v) \Phi(\sigma).
\end{align*}
By assumption, we know there are $c_v$ disjoint cycles with length $i_v$ in $\sigma_v$, it means the order of $p_{i_v}$ in the monomial $\Phi(\sigma_v)$ is $c_v$. So, when we calculate $\frac{\partial \Phi(\sigma_v)}{\partial p_{i_v}}$, we will have a coefficient $c_v$, i.e.
\begin{align*}
p_{i_1+i_2+i_3} \frac{\partial \Phi(\sigma_1)}{\partial p_{i_1}}\frac{\partial \Phi(\sigma_2)}{\partial p_{i_2}}\frac{\partial \Phi(\sigma_3)}{\partial p_{i_3}}=(\prod^3_{v=1}c_v) \Phi(\sigma).
\end{align*}
So, we have
\begin{align*}
& i_1 i_2 i_3 p_{i_1+i_2+i_3} \frac{\partial \Phi(\sigma_1)}{\partial p_{i_1}}\frac{\partial \Phi(\sigma_2)}{\partial p_{i_2}}\frac{\partial \Phi(\sigma_3)}{\partial p_{i_3}}\\
 &=\left( \sum_{(\delta_2,...,\delta_k,\sigma,\varepsilon) \in \mathcal{OA}^3_1(\alpha',\delta_2,...,\delta_k,\sigma',\varepsilon)}\Phi(\sigma) \right).
\end{align*}
Finally, we sum over all elements in $\mathcal{OB}^3_1(\alpha')$ and we get the following formula
\begin{align*}
& \sum_{(\delta_2,...,\delta_k,\sigma',\varepsilon) \in \mathcal{OB}^3_1(\alpha')}i_1 i_2 i_3 p_{i_1+i_2+i_3} \frac{\partial \Phi(\sigma_1)}{\partial p_{i_1}}\frac{\partial \Phi(\sigma_2)}{\partial p_{i_2}}\frac{\partial \Phi(\sigma_3)}{\partial p_{i_3}}\\
& =\sum_{\mathcal{OA}^3_1(\alpha',\delta_2,...,\delta_k,\sigma',\varepsilon)} \sum_{(\delta_2,...,\delta_k,\sigma,\varepsilon) \in \atop \mathcal{OA}^3_1(\alpha',\delta_2,...,\delta_k,\sigma',\varepsilon)} \Phi(\sigma)\\
& =\left( \sum_{(\delta_2,...,\delta_k,\sigma,\varepsilon) \in \mathcal{OA}^3_1(\alpha',i_1,i_2,i_3)}\Phi(\sigma) \right).
\end{align*}
\end{proof}

\begin{lemma}\label{5009}
We have the following equations
\begin{align*}
    & (\frac{\partial \widetilde{F}_3}{\partial u})_1=\frac{1}{3}\sum_{i,j,k \geq 1}( ijkp_{i+j+k}\frac{\partial \widetilde{F}_3}{\partial p_{i}}\frac{\partial \widetilde{F}_3}{\partial p_{j}}\frac{\partial \widetilde{F}_3}{\partial p_{k}}),\\
    & (\frac{\partial \widetilde{F}_3}{\partial u})_2=\sum_{i,j,k \geq 1}(i(j+k)p_{i+k} p_{j} \frac{\partial \widetilde{F}_3}{\partial p_{i}}\frac{\partial \widetilde{F}_3}{\partial p_{j+k}}),\\
    & (\frac{\partial \widetilde{F}_3}{\partial u})_3=\frac{1}{3}\sum_{i,j,k \geq 1}((i+j+k)p_i p_j p_k \frac{\partial \widetilde{F}_3}{\partial p_{i+j+k}}).
\end{align*}
\end{lemma}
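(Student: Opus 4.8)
The plan is to establish the three identities one type at a time, giving the type~1 equation in full and then running the type~2 and type~3 cases in parallel. The unifying idea is that each equation is the generating-function image of the local multiplication rule of Lemma~\ref{50010}: premultiplying a minimal transitive factorization by one $3$-cycle $\delta_1$ fuses three cycles into one (type~1), performs a cut-and-join on two components (type~2), or splits one cycle into three (type~3), and each operation is recorded by one of the displayed differential monomials.

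For the first equation I would first reduce the left-hand side. By Lemma~\ref{5010} we have $6|\tilde{\mathcal{A}}^3_1(\alpha)|=|\widetilde{\mathcal{WOA}}^3_1(\alpha)|=2|\mathcal{OA}^3_1(\alpha)|$, so $|\tilde{\mathcal{A}}^3_1(\alpha)|=\tfrac{1}{3}|\mathcal{OA}^3_1(\alpha)|$; since $(\frac{\partial \widetilde{F}_3}{\partial u})_1$ is by Remark~\ref{5005} the generating series for $|\tilde{\mathcal{A}}^3_1(\alpha)|$, this expresses it as $\tfrac{1}{3}$ times the generating series for $|\mathcal{OA}^3_1(\alpha)|$. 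For the right-hand side I would differentiate the defining series for $\widetilde{F}_3$ termwise, so that $\frac{\partial \widetilde{F}_3}{\partial p_{i_v}}$ sums over minimal transitive factorizations carrying one marked $p_{i_v}$-cycle, and then form the triple product. Running the multinomial bookkeeping from the proof of Lemma~\ref{5007}, which distributes the $n$ labels among three vertex sets and interleaves the three blocks of $3$-cycles, I would obtain
\begin{align*}
\prod_{v=1}^{3}\frac{\partial \widetilde{F}_3}{\partial p_{i_v}} = \sum_{n \geq 1}\sum_{\alpha' \vdash n}\Bigl(\sum_{(\delta_2,\dots,\delta_k,\sigma',\varepsilon)\in \mathcal{OB}^3_1(\alpha')}\frac{\partial \Phi(\sigma_1)}{\partial p_{i_1}}\frac{\partial \Phi(\sigma_2)}{\partial p_{i_2}}\frac{\partial \Phi(\sigma_3)}{\partial p_{i_3}}\Bigr)\frac{z^n}{n!}\frac{u^{\mu^3(\alpha')-2}}{(\mu^3(\alpha')-2)!},
\end{align*}
where $\sigma_1,\sigma_2,\sigma_3$ are the components in the canonical order and the sixfold $\varepsilon$-multiplicity of $\mathcal{OB}^3_1(\alpha')$ absorbs the ordered-triple count produced by the cube. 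Multiplying by $i_1 i_2 i_3\, p_{i_1+i_2+i_3}$, summing over $i_1,i_2,i_3$, and invoking Lemma~\ref{50010} inside the bracket replaces the $\mathcal{OB}$-sum by $\sum_{\mathcal{OA}^3_1(\alpha',i_1,i_2,i_3)}\Phi(\sigma)$; the disjoint decomposition of Definition~\ref{50009} then reassembles these over all $i_1,i_2,i_3$ and all $\alpha'$ into $|\mathcal{OA}^3_1(\alpha)|\,p_\alpha$. The only arithmetic to verify is the weight: for type~1 the $3$-cycle fuses three disjoint cycles into one, so $\mu^3(\alpha')=\mu^3(\alpha)+1$ by Lemma~\ref{408}, whence $\mu^3(\alpha')-2=\mu^3(\alpha)-1$ and $n$ is unchanged; the exponents therefore line up, and the prefactor $\tfrac{1}{3}$ delivers the first identity.

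The type~2 and type~3 equations I would obtain by the same route, using the two-component and one-component analogues of Construction~\ref{50008} and of Lemmas~\ref{5000012}, \ref{5010} and \ref{50010}. In type~2 the operation is a cut-and-join between two components, so the two-factor product $\frac{\partial \widetilde{F}_3}{\partial p_i}\frac{\partial \widetilde{F}_3}{\partial p_{j+k}}$ feeds through $(\bar{F}_3)_2=\tfrac{1}{2!}(\widetilde{F}_3)^2$ and produces $i(j+k)p_{i+k}p_j\,\frac{\partial}{\partial p_i}\frac{\partial}{\partial p_{j+k}}$; here $\delta_1$ preserves the number of cycles, so $\mu^3(\alpha')=\mu^3(\alpha)$, matching the weight $u^{\mu^3(\alpha')-1}$. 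In type~3 a single cycle of length $i+j+k$ is cut into three, giving $(i+j+k)p_ip_jp_k\,\frac{\partial}{\partial p_{i+j+k}}$ acting on the single restricted copy $(\bar{F}_3)_3$, with $\mu^3(\alpha')=\mu^3(\alpha)-1$; the derivative $\frac{\partial}{\partial p_{i+j+k}}$ automatically enforces the requirement that some part be at least $3$.

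The step I expect to be the main obstacle is the triple-product expansion (and its type~2 and type~3 analogues): one must check that differentiating three independent copies of $\widetilde{F}_3$ and then merging reproduces exactly the component-localized derivatives $\frac{\partial\Phi(\sigma_v)}{\partial p_{i_v}}$ summed over $\mathcal{OB}^3_1(\alpha')$, and not an unlocalized derivative of $p_{\alpha'}$. This is the reason the $\mathcal{OB}$-formalism and the careful tracking of the factorial weights through the multinomial identities of Lemma~\ref{5007} are needed. A minor point is the assignment of the three derivatives to the canonically ordered components versus the order $\varepsilon$; this is harmless, because the summand $i_1 i_2 i_3\,p_{i_1+i_2+i_3}\prod_v \frac{\partial\Phi(\sigma_v)}{\partial p_{i_v}}$ is symmetric under simultaneously permuting $(i_1,i_2,i_3)$ and the components, so any bijective assignment yields the same total once summed over $i_1,i_2,i_3$.
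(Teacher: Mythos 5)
Your proposal is correct and follows essentially the same route as the paper's own proof: it reduces $(\frac{\partial \widetilde{F}_3}{\partial u})_1$ to the generating series of $|\mathcal{OA}^3_1(\alpha)|$ via Lemma \ref{5010}, expands the triple product of derivatives over $\mathcal{OB}^3_1(\alpha')$ by the multinomial bookkeeping of Lemma \ref{5007}, and converts to $\mathcal{OA}$-sums via Lemma \ref{50010} with the same weight check $\mu^3(\alpha')-2=\mu^3(\alpha)-1$. The only difference is presentational (you apply Lemma \ref{5010} at the start rather than the end, and you spell out the localized-derivative expansion that the paper leaves implicit), so no further comment is needed.
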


\begin{proof}
We only give the proof of the first equation. The proofs of the other equations are similar.

First, we want to find the relation between the generating function of sets $\mathcal{OB}_1^3(\alpha')$ and the generating functions of sets $\mathcal{OA}_1^3(\alpha)$. Recall that $(\tilde{F}_3)^3$ counts the elements in $\mathcal{OB}_1^3(\alpha')$ by Lemma \ref{5007}. With the same notations in Construction \ref{50008}, let $n_v$ be the number of elements in $X_v$. Recall we have $\mu^3(\alpha')-2=\mu^3(\alpha)-1$ in the proof of Lemma \ref{5007}. By Lemma \ref{50010}, we get the following formula
\begin{align*}
&\left( \sum_{(\delta_2,...,\delta_k,\sigma',\varepsilon) \in \mathcal{OB}^3_1(\alpha')} i_1 i_2 i_3 p_{i_1+i_2+i_3} \frac{\partial \Phi(\sigma_1)}{\partial p_{i_1}}\frac{\partial \Phi(\sigma_2)}{\partial p_{i_2}}\frac{\partial \Phi(\sigma_3)}{\partial p_{i_3}} \right) \frac{z^n}{n!}\frac{u^{\mu^3(\alpha')-2}}{(\mu^3(\alpha')-2)!}\\
=&\left( \sum_{(\delta_2,...,\delta_k,\sigma,\varepsilon) \in \mathcal{OA}^3_1(\alpha',\alpha,i_1,i_2,i_3)}\Phi(\sigma) \right) \frac{z^n}{n!}\frac{u^{\mu^3(\alpha)-1}}{(\mu^3(\alpha)-1)!}.
\end{align*}
We sum over all sets $\mathcal{OB}^3_1(\alpha')$ and positive integers $i_1,i_2,i_3$ on the left side. Also, we sum over all all sets $\mathcal{OA}^3_1(\alpha',\alpha,i_1,i_2,i_3)$ and positive integers $i_1,i_2,i_3$ on the right side. We have the following equation
\begin{align*}
\sum_{i,j,k \geq 1}( ijkp_{i+j+k}\frac{\partial \widetilde{F}_3}{\partial p_{i}}\frac{\partial \widetilde{F}_3}{\partial p_{j}}\frac{\partial \widetilde{F}_3}{\partial p_{k}}) =\sum_{n \geq 1}\sum_{\alpha\vdash n}|\mathcal{OA}_1^3(\alpha)|\frac{z^n}{n!}\frac{u^{\mu^3(\alpha)-1}}{(\mu^3(\alpha)-1)!}p_\alpha.
\end{align*}
Indeed, the proof of Lemma \ref{5007} gives the left side of the equation. Then, by Lemma \ref{5010}, we know
\begin{align*}
6|\tilde{\mathcal{A}}^3_1(\alpha)|=|\widetilde{\mathcal{WOA}}^3_1(\alpha)|=2|\mathcal{OA}^3_1(\alpha)|.
\end{align*}
Recall that $(\frac{\partial \widetilde{F}_3}{\partial u})_1$ counts the elements in $\tilde{\mathcal{A}}^3_1(\alpha)$ by Lemma \ref{5005}. Hence, we have
\begin{align*}
6(\frac{\partial \widetilde{F}_3}{\partial u})_1=2\sum_{i,j,k \geq 1}( ijkp_{i+j+k}\frac{\partial \widetilde{F}_3}{\partial p_{i}}\frac{\partial \widetilde{F}_3}{\partial p_{j}}\frac{\partial \widetilde{F}_3}{\partial p_{k}}),
\end{align*}
which gives the first equation.
\end{proof}

\begin{theorem}\label{503}
The generating function $F_3$ satisfies the following relation
\begin{align*}
& \frac{1}{3}\sum_{i,j,k \geq 1}( (i+j+k)p_i p_j p_k \frac{\partial F_3}{\partial p_{i+j+k}}\\
& +ijkp_{i+j+k} \frac{\partial F_3}{\partial p_i}\frac{\partial F_3}{\partial p_j}\frac{\partial F_3}{\partial p_k}+3(i+j)kp_i p_{j+k} \frac{\partial F_3}{\partial p_{i+j}}\frac{\partial F_3}{\partial p_k})\\
& -\frac{1}{2}(z \frac{\partial F_3}{\partial z}+\sum_{i \geq 1} p_i \frac{\partial F_3}{\partial p_i}-2F_3)=0.
\end{align*}
\end{theorem}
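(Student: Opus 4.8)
The strategy is to assemble the three identities of Lemma \ref{5009}, evaluate the resulting equation at $u=1$, and recognize the left-hand side as a combination of the Euler-type operators $z\frac{\partial}{\partial z}$ and $\sum_i p_i\frac{\partial}{\partial p_i}$ acting on $F_3$. Since $\tilde{\mathcal{A}}^3(\alpha)=\bigcup_{i=1}^3\tilde{\mathcal{A}}^3_i(\alpha)$ is a disjoint union, Remark \ref{5005} gives $\frac{\partial \widetilde{F}_3}{\partial u}=\sum_{i=1}^3(\frac{\partial \widetilde{F}_3}{\partial u})_i$, so first I would simply add the three equations of Lemma \ref{5009} to produce a single identity expressing $\frac{\partial \widetilde{F}_3}{\partial u}$ as the sum of a cubic term, a mixed quadratic term, and a linear-in-derivative term in $\widetilde{F}_3$.

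Next I would set $u=1$. By Definition \ref{5004} we have $\widetilde{F}_3(z,1,p)=F_3(z,p)$ and hence $\frac{\partial \widetilde{F}_3}{\partial p_i}\big|_{u=1}=\frac{\partial F_3}{\partial p_i}$, so the right-hand side of the summed identity becomes precisely the three terms involving derivatives of $F_3$: the summand $(\frac{\partial \widetilde{F}_3}{\partial u})_3$ yields $\frac{1}{3}\sum(i+j+k)p_ip_jp_k\frac{\partial F_3}{\partial p_{i+j+k}}$, the summand $(\frac{\partial \widetilde{F}_3}{\partial u})_1$ yields $\frac{1}{3}\sum ijk\,p_{i+j+k}\frac{\partial F_3}{\partial p_i}\frac{\partial F_3}{\partial p_j}\frac{\partial F_3}{\partial p_k}$, and $(\frac{\partial \widetilde{F}_3}{\partial u})_2$ yields the mixed term.

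Separately, I would compute the left-hand side $\frac{\partial \widetilde{F}_3}{\partial u}\big|_{u=1}$ directly from the definition. Differentiating in $u$ brings down the exponent $\mu^3(\alpha)$, so at $u=1$ the coefficient of $\frac{z^n}{n!}p_\alpha$ is $\frac{\mu^3(\alpha)}{\mu^3(\alpha)!}|\mathcal{A}^3(\alpha)|$. By Lemma \ref{408}, extended to all partitions as in Remark \ref{5000001}, we have $2\mu^3(\alpha)=n+l-2$, where $l$ is the number of parts of $\alpha$. Writing $\frac{\mu^3(\alpha)}{\mu^3(\alpha)!}=\frac{1}{2}\cdot\frac{n+l-2}{\mu^3(\alpha)!}$ and splitting $n+l-2$ into its three pieces, I would recognize $z\frac{\partial F_3}{\partial z}$ as the operator reading off the power $n$ of $z$, the Euler operator $\sum_i p_i\frac{\partial F_3}{\partial p_i}$ as the one reading off the total degree $l$ of $p_\alpha$ (using $|\mathcal{A}^3(\alpha)|=h^3(\alpha)$ from Remark \ref{5003}), and $-2F_3$ as the constant contribution. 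This gives $\frac{\partial \widetilde{F}_3}{\partial u}\big|_{u=1}=\frac{1}{2}\left(z\frac{\partial F_3}{\partial z}+\sum_i p_i\frac{\partial F_3}{\partial p_i}-2F_3\right)$.

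Equating the two expressions for $\frac{\partial \widetilde{F}_3}{\partial u}\big|_{u=1}$ and moving everything to one side then yields the stated relation. The step I expect to require the most care is matching the mixed term: the expression coming from $(\frac{\partial \widetilde{F}_3}{\partial u})_2$, namely $\sum i(j+k)p_{i+k}p_j\frac{\partial F_3}{\partial p_i}\frac{\partial F_3}{\partial p_{j+k}}$, must be identified with the statement's term $\sum(i+j)k\,p_ip_{j+k}\frac{\partial F_3}{\partial p_{i+j}}\frac{\partial F_3}{\partial p_k}$ (the factor $3$ written in front of it cancels the overall $\frac{1}{3}$). Both are sums over monomials $p_ap_b\frac{\partial F_3}{\partial p_c}\frac{\partial F_3}{\partial p_d}$ subject to $a+b=c+d$ with coefficient $cd$, and the simultaneous swap $p_a\leftrightarrow p_b$, $\frac{\partial F_3}{\partial p_c}\leftrightarrow\frac{\partial F_3}{\partial p_d}$ of the two commuting factors, combined with a relabeling of the three summation indices, carries one index pattern onto the other. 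Verifying that this gives a bijection between the two index sets is the only genuine bookkeeping left once Lemma \ref{5009} is in hand.
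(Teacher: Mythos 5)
Your proposal is correct and follows essentially the same route as the paper: sum the three identities of Lemma \ref{5009} via the disjoint-union decomposition of Remark \ref{5003}, set $u=1$, and evaluate $\frac{\partial \widetilde{F}_3}{\partial u}\big|_{u=1}$ using $\mu^3(\alpha)=\frac{n+l-2}{2}$ to produce the right-hand side $\frac{1}{2}\bigl(z\frac{\partial F_3}{\partial z}+\sum_i p_i\frac{\partial F_3}{\partial p_i}-2F_3\bigr)$. Your explicit reindexing argument for the mixed term (which the paper performs silently when passing from Lemma \ref{5009} to the displayed equation in its proof) is correct: the map $(i,j,k)\mapsto(j,k,i)$ together with commutativity of the factors carries one index pattern onto the other.
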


\begin{proof}
Recall the generating function $\widetilde{F}_3(z,u,p_1,p_2,...)$ in Definition \ref{5004},
\begin{align*}
\widetilde{F}_3(z,u,p_1,p_2,...)=\sum_{n \geq 1}\sum_{\alpha\vdash n}h^{3}(\alpha)\frac{z^n}{n!}\frac{u^{\mu^3(\alpha)}}{\mu^3(\alpha)!}p_\alpha.
\end{align*}
Consider the following generating function
\begin{align*}
\frac{\partial \widetilde{F}_3}{\partial u}=\sum_{n \geq 1}\sum_{\alpha\vdash n}h^{3}(\alpha)\frac{z^n}{n!}\frac{u^{\mu^3(\alpha)-1}}{(\mu^3(\alpha)-1)!}p_\alpha.
\end{align*}
By Lemma \ref{5009}, we have the following equation
\begin{align*}
\frac{\partial \widetilde{F}_3}{\partial u}=\frac{1}{3}&\sum_{i,j,k \geq 1}((i+j+k)p_i p_j p_k \frac{\partial \widetilde{F}_3}{\partial p_{i+j+k}}+3(i+j)kp_i p_{j+k} \frac{\partial \widetilde{F}_3}{\partial p_{i+j}}\frac{\partial \widetilde{F}_3}{\partial p_{k}}\\
&+ijkp_{i+j+k} \frac{\partial \widetilde{F}_3}{\partial p_i}\frac{\partial \widetilde{F}_3}{\partial p_j}\frac{\partial \widetilde{F}_3}{\partial p_k}).
\end{align*}
By Definition \ref{5004}, we know $\widetilde{F}_3|_{u=1}=F_3$. So, let $u=1$. Then, the RHS is
\begin{align*}
\text{Right Side}=&\frac{1}{3}\sum_{i,j,k \geq 1}((i+j+k)p_i p_j p_k \frac{\partial F_3}{\partial p_{i+j+k}}+3(i+j)kp_i p_{j+k} \frac{\partial F_3}{\partial p_{i+j}}\frac{\partial F_3}{\partial p_{k}}\\
&+ijkp_{i+j+k} \frac{\partial F_3}{\partial p_i}\frac{\partial F_3}{\partial p_j}\frac{\partial F_3}{\partial p_k}).
\end{align*}
By simple calculations, we have
\begin{align*}
& \frac{\partial \widetilde{F}_3}{\partial u}=\sum_{n \geq 1}\sum_{\alpha\vdash n}\mu^3(\alpha)h^{3}(\alpha)\frac{z^n}{n!}\frac{u^{\mu^3(\alpha)-1}}{\mu^3(\alpha)!}p_\alpha,\\
& z\frac{\partial F_3}{\partial z}=\sum_{n \geq 1}\sum_{\alpha\vdash n}n h^{3}(\alpha)\frac{z^n}{n!}\frac{1}{\mu^3(\alpha)!}p_\alpha,\\
& \sum_{i \geq 1}p_i \frac{\partial F_3}{\partial p_i}=\sum_{n \geq 1}\sum_{\alpha\vdash n}l(\alpha)h^{3}(\alpha)\frac{z^n}{n!}\frac{1}{\mu^3(\alpha)!}p_\alpha.
\end{align*}
where $l(\alpha)$ is the length for the partition $\alpha$. By Lemma \ref{408} and Remark \ref{5000001}, we know
\begin{align*}
\mu^3(\alpha)=\frac{n+l(\alpha)-2}{2}.
\end{align*}
Hence, we have the following formula
\begin{align*}
\frac{\partial \widetilde{F}_3}{\partial u}\mid_{u=1}=\frac{1}{2}(z \frac{\partial F_3}{\partial z}+\sum_{i \geq 1} p_i \frac{\partial F_3}{\partial p_i}-2F_3).
\end{align*}
So, we have
\begin{align*}
\text{Left Side}=\frac{1}{2}(z \frac{\partial F_3}{\partial z}+\sum_{i \geq 1} p_i \frac{\partial F_3}{\partial p_i}-2F_3).
\end{align*}
Combining LHS and RHS, we obtain the theorem.
\end{proof}

Now let's go back to the $W$-operator. The $W$-operators $W([n])$ are well-defined differential operators on polynomial ring $\mathbb{C}[p_1,p_2,...]$. Some terms in $W([n])$ contain higher derivatives. For example, in $W([2])$, we have a summation
\begin{align*}
\frac{1}{2}\sum_{i \geq 1}\sum_{j\geq 1}ijp_{i+j}\frac{\partial^2}{\partial p_i \partial p_j},
\end{align*}
which contains second derivatives. If we change the higher derivatives into the product of first derivatives, we will get a new operator. We still take $W([2])$ as an example,
\begin{align*}
\widetilde{W}([2])= \frac{1}{2}\sum_{i \geq 1}\sum_{j\geq 1}(ijp_{i+j}\frac{\partial}{\partial p_i} \times \frac{\partial}{\partial p_i}+(i+j)p_i p_j \frac{\partial}{\partial p_{i+j}}).
\end{align*}
As an operator, it means
\begin{align*}
\widetilde{W}([2])F= \frac{1}{2}\sum_{i \geq 1}\sum_{j\geq 1}(ijp_{i+j}\frac{\partial F}{\partial p_i} \frac{\partial F}{\partial p_i}+(i+j)p_ip_j \frac{\partial F}{\partial p_{i+j}}),
\end{align*}
where $F \in \mathbb{C}[p_1,p_2,...]$.
\begin{definition}\label{504}
Define $\widetilde{W}([d])$ by replacing all higher derivatives in $W([d])$ by the products of first derivative operators as mentioned above.
\end{definition}

\begin{remark}
With the new notation $\widetilde{W}([2])$, we can rewrite the formula in Lemma \ref{203} as the following equation,
\begin{align*}
\widetilde{W}([2])F_2 - z \frac{\partial F_2}{\partial z} - \sum_{i \geq 1}p_i \frac{\partial F_2}{\partial p_i}+2F_2=0.
\end{align*}
Finally, we rewrite Theorem \ref{503} as following:
\end{remark}
\begin{theorem}\label{505}
The generating function $F_3(z,p)$ satisfies the following relation
\begin{align*}
& \widetilde{W}([3])F_3-\sum_{i,j,k \geq 1}(i+j+k)p_{i+j+k} \frac{\partial F_3}{\partial p_{i+j+k}}\\
=& \frac{1}{2}(z \frac{\partial F_3}{\partial z}+\sum_{i \geq 1} p_i \frac{\partial F_3}{\partial p_i}-2F_3).
\end{align*}
\end{theorem}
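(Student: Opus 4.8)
The plan is to recognize Theorem~\ref{505} as nothing more than a reformulation of the already-established Theorem~\ref{503}, with the combinatorial cubic expression repackaged through the operator $\widetilde{W}([3])$. First I would write out $\widetilde{W}([3])F_3$ explicitly. By Definition~\ref{504}, $\widetilde{W}([3])$ is obtained from the operator $W([3])$ of Example~\ref{3006} by leaving every first-order term untouched and replacing each higher derivative by the corresponding product of first derivatives acting on $F_3$; concretely, $\frac{\partial^3}{\partial p_{i_1}\partial p_{i_2}\partial p_{i_3}}\mapsto \frac{\partial F_3}{\partial p_{i_1}}\frac{\partial F_3}{\partial p_{i_2}}\frac{\partial F_3}{\partial p_{i_3}}$ and $\frac{\partial^2}{\partial p_a\partial p_b}\mapsto \frac{\partial F_3}{\partial p_a}\frac{\partial F_3}{\partial p_b}$. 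This produces six groups of terms, one for each line of the formula in Example~\ref{3006}, and the whole proof reduces to matching these against the three cubic terms of Theorem~\ref{503}.

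Next I would carry out that matching line by line. The triple-derivative line contributes $\frac{1}{3}\sum_{i,j,k\geq 1} ijk\,p_{i+j+k}\frac{\partial F_3}{\partial p_i}\frac{\partial F_3}{\partial p_j}\frac{\partial F_3}{\partial p_k}$, which is exactly the triple-product term of Theorem~\ref{503}. The pure ``cut'' line $\frac{1}{3}\sum_{i,j,k\geq 1}(i+j+k)p_ip_jp_k\frac{\partial F_3}{\partial p_{i+j+k}}$ coincides with the first term of Theorem~\ref{503}. For the three second-derivative lines I would observe that, after the substitution, they are carried into one another by the cyclic relabelling $i_1\mapsto i_2\mapsto i_3\mapsto i_1$ of the free summation indices; since the sum runs over all triples, the three lines contribute equally, and their sum collapses to $\sum_{i,j,k\geq 1}(i+j)k\,p_i p_{j+k}\frac{\partial F_3}{\partial p_{i+j}}\frac{\partial F_3}{\partial p_k}$, which is the $3(i+j)k$ term of Theorem~\ref{503} once its prefactor $\frac{1}{3}$ is applied.

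The only line not yet accounted for is the diagonal term $\frac{1}{3}\sum_{i_1,i_2,i_3\geq 1}(i_1+i_2+i_3)p_{i_1+i_2+i_3}\frac{\partial F_3}{\partial p_{i_1+i_2+i_3}}$, which accounts for the quantity subtracted on the left-hand side of Theorem~\ref{505}. Transposing it to the other side turns the left-hand side of Theorem~\ref{505} into the cubic sum of Theorem~\ref{503}, whose value was already shown there to equal $\frac{1}{2}\bigl(z\frac{\partial F_3}{\partial z}+\sum_{i\geq 1}p_i\frac{\partial F_3}{\partial p_i}-2F_3\bigr)$; substituting this gives the claim. The step I expect to be most delicate is precisely the bookkeeping of this diagonal line: one must reconcile the overall factor $\frac{1}{3}$ of $W([3])$ and the multiplicity with which each value of $i_1+i_2+i_3$ is represented by ordered triples against the normalization of $\sum_{i,j,k\geq 1}(i+j+k)p_{i+j+k}\frac{\partial F_3}{\partial p_{i+j+k}}$, so that the subtraction cancels it cleanly; verifying that these normalizations agree is the crux of the argument. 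The collapse of the three mixed terms into the single $3(i+j)k$ term via index relabelling is the other place where careful attention to coefficients is required, though it is otherwise routine.
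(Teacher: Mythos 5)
Your approach coincides with the paper's: Theorem \ref{505} is given no independent proof there, being presented (after Definition \ref{504}) as a direct rewriting of Theorem \ref{503}, so the term-by-term identification you propose is exactly what is required. Your matching of the five degree-four lines of Example \ref{3006} is correct: the triple-derivative line gives the $ijk$ term, the pure product line gives the $(i+j+k)p_ip_jp_k$ term, and the three mixed lines are carried into one another by relabelling the summation indices and collapse to the $3(i+j)k$ term of Theorem \ref{503}.

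The step you defer, however, is the one that does not close as literally printed. The remaining line of $\widetilde{W}([3])F_3$ is $\frac{1}{3}\sum_{i,j,k\geq 1}(i+j+k)p_{i+j+k}\frac{\partial F_3}{\partial p_{i+j+k}}$, since it sits under the global prefactor $\frac{1}{3}$ of $W([3])$, whereas the quantity subtracted in the statement of Theorem \ref{505} is $\sum_{i,j,k\geq 1}(i+j+k)p_{i+j+k}\frac{\partial F_3}{\partial p_{i+j+k}}$ with no $\frac{1}{3}$. Both are sums over ordered triples, so for each fixed $m=i+j+k$ the operator $mp_m\frac{\partial}{\partial p_m}$ occurs with the same multiplicity $\binom{m-1}{2}$ in each; the two expressions therefore differ by an exact factor of $3$, and the clean cancellation your argument asserts does not occur for the displayed normalization. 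The identity does follow from Theorem \ref{503} once the subtracted term carries the prefactor $\frac{1}{3}$, i.e.\ once it is read as $\widetilde{FS}_{(123)}F_3$ in the notation of Section 6 --- which is also what Definition \ref{602} and Corollary \ref{605} require for consistency at $d=3$. So you should actually carry out the normalization check you flag as the crux: doing so shows that either the subtracted sum must be given the coefficient $\frac{1}{3}$, or the proof cannot reach the statement exactly as displayed.
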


\section{Conjecture for General Case}
By Theorem \ref{306}, we know that $W([d])$ can be written as the sum of $d!$ summations, each of which is uniquely determined by a permutation in $S_d$ (see Theorem 3.15 in \cite{Sun1}). For any permutation $\beta \in S_d$, denote by $FS_\beta$ the summation corresponding to $\beta$. We have
\begin{align*}
W([d])=\sum_{\beta \in S_d} FS_\beta.
\end{align*}
Now we give some examples about $FS_\beta$. The first example is $W([2])$. From Example \ref{3006}, we know
\begin{align*}
W([2])= \frac{1}{2}\sum_{i \geq 1}\sum_{j\geq 1}(&ijp_{i+j}\frac{\partial^2}{\partial p_i \partial p_j} & (1 2)\\
&+(i+j)p_i p_j \frac{\partial}{\partial p_{i+j}}) & (1).
\end{align*}
The first summation
\begin{align*}
FS_{(12)}=\frac{1}{2}\sum_{i \geq 1}\sum_{j\geq 1}(ijp_{i+j}\frac{\partial^2}{\partial p_i \partial p_j})
\end{align*}
corresponds to the permutation $(12)$ and the second summation
\begin{align*}
FS_{(1)(2)}=\frac{1}{2}\sum_{i \geq 1}\sum_{j\geq 1}((i+j)p_i p_j \frac{\partial}{\partial p_{i+j}})
\end{align*}
corresponds to the permutation $(1)(2)$.

The second example is $W([3])$ (see Example \ref{3006}). We have
\begin{align*}
		W([3]) = \frac{1}{3}\sum_{i_1,i_2,i_3 \geq 1}
		(&i_1i_2i_3 p_{i_1+i_2+i_3}\frac{\partial^3}{\partial p_{i_1} \partial p_{i_2}\partial p_{i_3}}+ &(321) \\
		+&i_1(i_2+i_3)p_{i_1+i_3}p_{i_2}\frac{\partial^2}{\partial p_{i_1} \partial p_{i_2+i_3}}+ &(13)(2)\\
		+&i_2(i_1+i_3)p_{i_1+i_2}p_{i_3}\frac{\partial^2}{\partial p_{i_2} \partial p_{i_1+i_3}}+ &(12)(3)\\
		+&i_3(i_1+i_2)p_{i_3+i_2}p_{i_1}\frac{\partial^2}{\partial p_{i_3} \partial p_{i_1+i_2}}+ &(1)(23)\\
		+&(i_1+i_2+i_3)p_{i_1}p_{i_2}p_{i_3} \frac{\partial}{\partial p_{i_1+i_2+i_3}}+ &(3)(2)(1)\\
		+&(i_1+i_2+i_3)p_{i_1+i_2+i_3}\frac{\partial}{\partial p_{i_1+i_2+i_3}}) &(123).
\end{align*}
Each summation $FS_\beta$ is the sum of terms. All terms in $FS_\beta$ have the same polynomial degree and the same order of the differential part. for each summation $FS_\beta$, we define its degree. Hence, we can define the degree $FS_\beta$ as following.
\begin{definition}\label{601}
Given any summation $FS_\beta$ of $W([d])$, $dP(FS_\beta)$ is the degree of its polynomial part and $dD(FS_\beta)$ is the order of its derivative part. The degree of the summation $FS_\beta$ is $d(FS_\beta)=dP(FS_\beta)+dD(FS_\beta)$.
\end{definition}
Let's consider $W([3])$. Five of the six summations have degree $4$ and the summation $FS_{(123)}$ is of degree $2$. If we go back to $W([2])$, all summations are of degree $3$. In fact, the degree of the summations in $W([d])$ is at most $d+1$. We discuss the degree of $W$-operator in \cite{Sun4}. Since $W([d])$ is the sum of $d!$ summations, so is $\widetilde{W}([d])$. We denote by $\widetilde{FS}_\beta$ the corresponding summation in $\widetilde{W}([d])$. We define another operator $\widetilde{HW}([d])$ as following.
\begin{definition}\label{602}
\begin{align*}
\widetilde{HW}([d])=\sum_{\beta \in S_d \atop FS_\beta \text{ is of degree }d+1} \widetilde{FS}_\beta.
\end{align*}
\end{definition}
Now we can state our conjecture.
\begin{conjecture}\label{603}
\begin{equation}\label{conj1}
\frac{\partial \widetilde{F}_d}{\partial u}=\widetilde{HW}([d])\widetilde{F}_d,
\end{equation}
where $\widetilde{F}_d=\sum_{n \geq 1}\sum_{\alpha\vdash n}h^d(\alpha)\frac{z^n}{n!}\frac{u^{\mu^d(\alpha)}}{\mu^d(\alpha)!}p_\alpha$ (see Definition \ref{401}, \ref{402}, \ref{5004}).
\end{conjecture}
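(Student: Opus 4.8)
The plan is to generalize the combinatorial proof of the $d=3$ case (Theorem~\ref{503}, obtained through Lemmas~\ref{502}--\ref{5009}) to arbitrary $d$, organized around a genus-$0$ reading of the degree of each summation $FS_\beta$. The first step is to record the general minimality formula $\mu^d(\sigma)=\frac{n+l-2}{d-1}$: the Riemann--Hurwitz computation of Remark~\ref{409}, applied to a connected $n$-cover with $k$ branch points of type $1^{n-d}d$ and one of type $\alpha$, gives $k=\frac{n+l-2+2g}{d-1}$, so that a transitive factorization is minimal exactly when the associated cover has genus $g=0$. This is the $d$-analogue of Lemma~\ref{408} and reduces the whole problem to understanding genus-$0$ factorizations.

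Second, I would identify the degree of $FS_\beta$ combinatorially from the expansion of $:tr(D^d):$ in \cite{Sun1}. Writing $c$ for the long cycle governing $tr(D^d)$, one checks that the polynomial degree is $dP(FS_\beta)=c(\beta)$ (the number of cycles produced among the $d$ marked points, i.e.\ the number of $p$-factors) and the derivative order is $dD(FS_\beta)=c(c\beta^{-1})$ (the number of derivative factors). Hence $d(FS_\beta)=c(\beta)+c(c\beta^{-1})$, and the maximal value $d+1$ is attained exactly when $\beta$ and $c\beta^{-1}$ form a genus-$0$ (planar) factorization of $c$. Thus $\widetilde{HW}([d])$ is precisely the sum of the genus-$0$ summations $\widetilde{FS}_\beta$, and $dD=c(c\beta^{-1})$ is the number of connected components that the removed $d$-cycle $\delta_1$ joins.

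The combinatorial heart is the third step, a generalization of Lemma~\ref{502}. Given a minimal (hence genus-$0$) transitive factorization $(\delta_1,\dots,\delta_k)$ of $\sigma$, set $\sigma'=\delta_1^{-1}\sigma$ and let $\beta\in S_d$ record how the $d$ moved points of $\delta_1$ distribute among the cycles of $\sigma'$. The key lemma to prove is an additivity-of-genus statement: removing $\delta_1$ splits $\{1,\dots,n\}$ into $m=c(c\beta^{-1})$ components, and genus-$0$ of the whole factorization is equivalent to all $m$ restricted factorizations being minimal together with $\beta$ being planar, i.e.\ $d(FS_\beta)=d+1$. Any positive-genus component, or a non-planar gluing, would force the total genus to be positive, contradicting minimality; this is the general-$d$ form of the statement "Case (4) cannot happen." Conversely, gluing $m$ minimal factorizations by a planar $d$-cycle yields a minimal factorization. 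For each planar $\beta$ this sets up a bijection, refined over the target type $\alpha$, the source type $\alpha'$, and the cut-lengths $(i_1,\dots,i_d)$, between the ordered sets generalizing $\mathcal{B}^3_i,\mathcal{OB}^3_i,\mathcal{OA}^3_i$ and Construction~\ref{50008}.

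Fourth, I would translate the bijection into the generating-function identity, following Lemmas~\ref{5007}, \ref{50010}, \ref{5010} and \ref{5009}. Since the $m$ components are mutually disconnected and each is an independent minimal factorization, each contributes its own factor $\partial\widetilde{F}_d/\partial p_{i_v}$ (this is exactly why the tilde operator, a product of first derivatives, appears rather than $W([d])$ itself), while the $c(\beta)$ cycles created among the marked points supply the polynomial factors of $\widetilde{FS}_\beta$; the $u$-grading matches because $\sum_v\mu^d(\alpha_v)=\mu^d(\sigma)-1$, generalizing Lemma~\ref{50005}. Summing over each planar $\beta$ rebuilds $\widetilde{FS}_\beta\widetilde{F}_d$, and summing over all planar $\beta$ gives $\widetilde{HW}([d])\widetilde{F}_d$, while $\partial\widetilde{F}_d/\partial u$ counts the same tuples with $\delta_1$ restored. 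The hard part will be the coefficient bookkeeping in Steps three and four for all $d$ simultaneously: proving the additivity-of-genus equivalence (for $d=3$ it collapses to the single impossible case, but for larger $d$ one must rule out every positive-genus distribution of the $d$ points and match the surviving planar $\beta$ one-to-one), and tracking the symmetry factors, namely the $m!$ orderings of the joined components (generalizing the $\tfrac{1}{3!}$ and $\tfrac{1}{2!}$ of Lemma~\ref{5007}) together with the stabilizer of a $d$-cycle under the $S_d$-reorderings of its marked points (the factor $(d-1)!$ generalizing the $2$ of Lemma~\ref{5010}), so that the multiplicities land exactly on the rational coefficient of each $FS_\beta$. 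I expect the cleanest route is to read both facts directly off the explicit $:tr(D^d):$ expansion of \cite{Sun1}, matching its genus-$0$ terms one-to-one with the reconnection types rather than re-deriving the coefficients by hand.
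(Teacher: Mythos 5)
The statement you were asked to prove is Conjecture \ref{603}: the paper itself offers no proof of it (it proves only $d=3$, Theorem \ref{505}, and remarks that the general case ``will be very complicated to write down''), so your proposal must stand on its own. Its organizing dictionary is attractive and partly correct: with $c$ the long cycle one indeed has $dP(FS_\beta)=c(\beta)$, $dD(FS_\beta)=c(c\beta^{-1})$, hence $d(FS_\beta)=d+1-2g$ for the genus $g$ of the pair $(\beta,c\beta^{-1})$, which matches the labels in Example \ref{3006}; your symmetry factors $m!$ and $(d-1)!$ also specialize correctly to the $\tfrac{1}{3!}$, $\tfrac{1}{2!}$ of Lemma \ref{5007} and the $2$ of Lemma \ref{5010}. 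The fatal problem is your first step, the claim that a transitive factorization into $d$-cycles is minimal exactly when the associated cover has genus $0$. Riemann--Hurwitz gives $k=\frac{n+l-2+2g}{d-1}$, but minimality of $k$ only forces $g$ to be the \emph{smallest achievable} genus, and for $d\geq4$ that minimum can be positive because of the divisibility constraint $(d-1)\mid(n+l-2+2g)$. Concretely, take $d=4$, $n=4$, $\sigma=(1\,2\,3)(4)$: then $(1\,3\,2\,4)\cdot(1\,3\,4\,2)=\sigma$ is a transitive factorization into two $4$-cycles, and it is minimal (a single $4$-cycle cannot equal $\sigma$), yet $\frac{n+l-2}{d-1}=\frac{4}{3}$ is not an integer and the cover has genus $1$. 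So $\mu^4((3,1))=2$ and $h^4((3,1))>0$, while the factorization is not planar: your equivalence ``minimal $=$ genus zero'' breaks, and with it the additivity-of-genus lemma you build steps three and four upon.

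Worse, this does not merely block your argument; it falsifies the conjecture as literally stated (with $\mu^d$ the true minimum of Definition \ref{402}), so no strategy can succeed without first repairing the statement. Compare coefficients of $u\,z^4$: the left side $\partial\widetilde F_4/\partial u$ contains $\frac{1}{4!}\bigl(h^4((3,1))\,p_3p_1+h^4((2,2))\,p_2^2\bigr)\neq0$ (note $(1\,2\,3\,4)^2=(1\,3)(2\,4)$ gives $h^4((2,2))>0$), whereas on the right side every derivative factor $\partial\widetilde F_4/\partial p_j$ carries $z$-degree at least $4$, so the only degree-$5$ summation of $\widetilde W([4])$ that can act in $z$-degree $4$ is the one with a single derivative ($\beta=e$), and it produces only $p_1^4$ monomials at this order. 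The root cause is the paper's own Lemma \ref{604} and the remark after it: the equivalence ``$h^d(\alpha)=0$ iff $\mu^d(\alpha)$ does not make sense'' holds for $d\leq3$ but fails for $d\geq4$, so any correct treatment must first restrict $\widetilde F_d$ to partitions with $(d-1)\mid n+l(\alpha)-2$, i.e.\ define it by genus-zero factorization counts rather than minimal ones. Even granting that repair, your steps three and four remain declarations of intent rather than proofs: the generalization of Lemma \ref{502} (for $d=3$ a two-transposition cut/join analysis with one forbidden case; in general one must exclude every non-planar $\beta$ \emph{and} prove the converse gluing bijection with its multiplicities), and the coefficient matching, which needs the explicit top-degree summations of $W([d])$ that the paper possesses only for $d\leq3$ and otherwise defers to \cite{Sun4}, listed as ``to appear.''
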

If we take $u=1$, RHS of \eqref{conj1} is $\widetilde{HW}([d])F_d$, where
\begin{align*}
F_d=\sum_{n \geq 1}\sum_{\alpha\vdash n}h^d(\alpha)\frac{z^n}{n!}\frac{1}{\mu^d(\alpha)!}p_\alpha.
\end{align*}
To determine the LHS of \eqref{conj1}, we need the following lemma proved by Goulden and Jackson \cite{MR1797682}.
\begin{lemma}\label{604}
Let $\alpha$ be a partition of $n$. We have
\begin{align*}
\mu^{d}(\alpha)=\frac{n+l(\alpha)-2}{d-1},
\end{align*}
where $l(\alpha)$ is the length of the partition.
\end{lemma}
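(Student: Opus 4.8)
The plan is to prove the formula by establishing two matching bounds: a lower bound $\mu^d(\alpha)\ge\frac{n+l-2}{d-1}$ valid for every transitive factorization of a permutation of type $\alpha$ into $d$-cycles, together with an explicit construction realizing a transitive factorization with exactly $\frac{n+l-2}{d-1}$ factors. Since the lower bound forces $k\ge\frac{n+l-2}{d-1}$ for any admissible $k$ and the construction exhibits equality, the two together pin down $\mu^d(\alpha)$ and in particular show the quotient is an integer whenever $h^d(\alpha)\neq 0$.

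For the lower bound I would reduce to the transposition count of Lemma \ref{201}. Any $d$-cycle factors into $d-1$ transpositions, for instance $(a_1\,a_2\,\cdots\,a_d)=(a_1\,a_d)(a_1\,a_{d-1})\cdots(a_1\,a_2)$, and these $d-1$ transpositions act transitively on $\{a_1,\dots,a_d\}$, the support of the $d$-cycle. Hence if $(\delta_1,\dots,\delta_k)$ is a transitive factorization of $\sigma$ into $d$-cycles, replacing each $\delta_i$ by its $d-1$ transpositions yields a factorization of $\sigma$ into $k(d-1)$ transpositions generating a group with the same orbits, hence still transitive. Lemma \ref{201} then gives $k(d-1)\ge n+l-2$, i.e. $k\ge\frac{n+l-2}{d-1}$.

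The upper bound is the substantive part, and I would obtain it by induction on $l$, generalizing Construction \ref{403} and Corollary \ref{405} from $d=3$ to arbitrary $d$. The structural fact to establish is the analogue of Case (1)/(3): for a $d$-cycle $\omega$ whose $d$ points lie one in each of $d$ distinct cycles of a permutation $\tau$, the product $\omega\tau$ merges these cycles into one, lowering the cycle count by $d-1$; conversely a single $d$-cycle can split one cycle into $d$ cycles, raising the count by $d-1$. Granting this, the induction step is as in Lemma \ref{408}: given $\sigma$ with $l\ge d$ cycles, let $\sigma'$ be obtained by merging the first $d$ disjoint cycles of $\sigma$, so $\sigma'$ has $l-(d-1)$ cycles; by the induction hypothesis $\sigma'$ admits a transitive factorization into $\frac{n+l-2}{d-1}-1$ $d$-cycles, and prepending one further $d$-cycle $\omega$ that splits the merged cycle back recovers $\sigma=\omega\sigma'$, giving a transitive factorization of $\sigma$ with $\frac{n+l-2}{d-1}$ $d$-cycles (transitivity is preserved since adjoining $\omega$ to an already transitive generating set keeps it transitive).

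The base cases are $1\le l\le d-1$, the residues reached by repeatedly subtracting $d-1$, subject to the divisibility $(d-1)\mid(n+l-2)$. For these I would give an explicit construction generalizing Lemma \ref{407}: to write an $n$-cycle as a product of $\frac{n-1}{d-1}$ $d$-cycles one uses the chain of overlapping $d$-cycles $\delta_i=\big((i-1)(d-1)+1\ \ \cdots\ \ (i-1)(d-1)+d\big)$, $1\le i\le\frac{n-1}{d-1}$, and checks that their product is an $n$-cycle and that they act transitively; products of a few disjoint cycles are handled similarly (cf. Remark \ref{4008}). I expect the main obstacle to be precisely this base-case analysis together with the bookkeeping needed to verify transitivity at every stage and to confirm the generalized merge/split behaviour of $d$-cycles; once these are in place the two bounds coincide and force $\mu^d(\alpha)=\frac{n+l-2}{d-1}$. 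As an independent check, the Riemann--Hurwitz computation of Remark \ref{409}, applied with ramification index $d$ at the $k$ simple branch points and $n-l$ at the last, yields $(d-1)k=n+l-2$ for a genus-$0$ cover, confirming the value.
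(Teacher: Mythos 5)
The paper gives no proof of this lemma at all: it is simply quoted from Goulden and Jackson \cite{MR1797682}. Your proposal supplies one, and it is precisely the general-$d$ version of the argument the paper itself uses for $d=3$ in Lemma \ref{408}: the lower bound by expanding each $d$-cycle into $d-1$ transpositions (which preserves the orbit structure, hence transitivity) and invoking Lemma \ref{201}; the upper bound by induction on the number $l$ of disjoint cycles, using the fact that a single $d$-cycle can split one cycle into $d$ prescribed cycles (the $d$-analogue of Cases (1) and (3) of Construction \ref{403} and Corollary \ref{405}); and an overlapping chain of $d$-cycles for the base case, exactly as in Lemma \ref{407}. So the strategy is sound and consistent with the paper's own treatment of the special case. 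Two points deserve care. First, the base cases $2\le l\le d-1$ are not merely ``similar'' to the $n$-cycle case: for each such $l$ one must exhibit a transitive product of $(n+l-2)/(d-1)$ $d$-cycles realizing an arbitrary cycle type with $l$ parts, and most of the remaining combinatorial work lives there (you correctly flag this). Second, your claim that the two bounds show $(d-1)\mid(n+l-2)$ whenever $h^d(\alpha)\neq 0$ does not follow: if the divisibility fails your construction produces nothing, the two bounds do not meet, and a transitive factorization may still exist with $k(d-1)=n+l-2+2g$ for some genus $g>0$, in which case $\mu^d(\alpha)$ is strictly larger than $(n+l-2)/(d-1)$. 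The lemma is therefore only correct under the divisibility hypothesis (equivalently, in the genus-zero case); the paper glosses over this in the remark following Lemma \ref{604}, and your proof should assume it explicitly rather than try to derive it.
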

Hence, the LHS of \eqref{conj1} is
\begin{equation}\label{conj2}
\frac{\partial \widetilde{F}_d}{\partial u}|_{u=1}=\frac{1}{d-1}\left( z\frac{\partial F_d}{\partial z}+ \sum_{i \geq 1}p_i \frac{\partial F_d}{\partial p_i} -2F_d \right).
\end{equation}
We want to make a remark about Lemma \ref{604}. If $\mu^{d}(\alpha)$ exists, $\mu^{d}(\alpha)$ is a positive integer. Sometimes, $\mu^d(\alpha)$ does not exist. But, we can extend the definition of $\mu^{d}(\alpha)$ to any partition $\alpha$ in the same way as Remark \ref{5000001}.

If Conjecture \ref{603} is true, we have the following corollary.
\begin{corollary}\label{605}
\begin{align*}
\widetilde{HW}([d])F_d=\frac{1}{d-1}\left( z\frac{\partial F_d}{\partial z}+ \sum_{i \geq 1}p_i \frac{\partial F_d}{\partial p_i} -2F_d \right).
\end{align*}
\end{corollary}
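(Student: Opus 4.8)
The plan is to deduce the corollary from Conjecture \ref{603} purely by specializing the identity \eqref{conj1} at $u=1$ and then identifying each side explicitly. First I would treat the right-hand side. The operator $\widetilde{HW}([d])$ is built only from the variables $p_i$ and their first derivatives and contains no $u$, so it commutes with the substitution $u=1$; combined with $\widetilde{F}_d|_{u=1}=F_d$, which is immediate from Definition \ref{5004}, this gives $\bigl(\widetilde{HW}([d])\widetilde{F}_d\bigr)\big|_{u=1}=\widetilde{HW}([d])F_d$.

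Next I would compute the left-hand side $\frac{\partial \widetilde{F}_d}{\partial u}\big|_{u=1}$. Differentiating the defining series of $\widetilde{F}_d$ term by term in $u$ and then setting $u=1$ produces a factor $\mu^d(\alpha)$ in each summand, so that
\begin{align*}
\frac{\partial \widetilde{F}_d}{\partial u}\Big|_{u=1}=\sum_{n\geq 1}\sum_{\alpha\vdash n}\mu^d(\alpha)\,h^d(\alpha)\frac{z^n}{n!}\frac{1}{\mu^d(\alpha)!}p_\alpha.
\end{align*}
Substituting the value $\mu^d(\alpha)=\frac{n+l(\alpha)-2}{d-1}$ from Lemma \ref{604} converts this factor into $\frac{1}{d-1}(n+l(\alpha)-2)$. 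The three pieces $n$, $l(\alpha)$, and $-2$ are then recognized as the coefficients produced respectively by $z\frac{\partial}{\partial z}$, which multiplies the $\frac{z^n}{n!}$ coefficient by $n$; by the Euler operator $\sum_{i\geq 1}p_i\frac{\partial}{\partial p_i}$, which multiplies $p_\alpha$ by its number of parts $l(\alpha)$; and by multiplication by $-2$, each applied to $F_d$. This is exactly the identity \eqref{conj2} already recorded in the text, and equating it with the right-hand side yields the corollary.

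The hard part will not be any calculation but rather two points of rigor. First, the term-by-term differentiation and the $u=1$ evaluation must be justified as operations on formal power series; this is legitimate since every coefficient of a fixed monomial $\frac{z^n}{n!}p_\alpha$ is a finite expression, so no convergence question arises. Second, one must handle the extended definition of $\mu^d(\alpha)$ for partitions $\alpha$ admitting no factorization into $d$-cycles; this causes no trouble because $h^d(\alpha)=0$ precisely on those $\alpha$, exactly as in Remark \ref{5000001}, so every summand in which $\mu^d(\alpha)$ is only formally defined vanishes identically. Thus the entire content of the corollary reduces to the truth of Conjecture \ref{603}, and given that hypothesis the passage to $u=1$ is routine.
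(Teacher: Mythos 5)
Your proposal is correct and follows essentially the same route as the paper: specialize Conjecture \ref{603} at $u=1$, use $\widetilde{F}_d|_{u=1}=F_d$ on the right, and on the left convert the factor $\mu^d(\alpha)$ produced by $\partial/\partial u$ into $\frac{1}{d-1}(n+l(\alpha)-2)$ via Lemma \ref{604}, matching the three operators $z\partial_z$, $\sum_i p_i\partial_{p_i}$, and $-2$; this is exactly the derivation of \eqref{conj2} given in the text. Your added remarks on formal term-by-term differentiation and on the extended definition of $\mu^d(\alpha)$ (vanishing of the corresponding summands since $h^d(\alpha)=0$) are consistent with Remark \ref{5000001} and do not change the argument.
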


\end{document}